\documentclass{article}
\usepackage{graphicx} 
\usepackage{dsfont}
\usepackage{hyperref}
\usepackage{enumitem}
\usepackage{bbm}
\usepackage{caption}
\usepackage{authblk}
\hypersetup{
colorlinks = true, 
urlcolor = blue, 
linkcolor = red, 
citecolor = blue 
}
\usepackage{amsmath} 
\usepackage{amssymb}
\usepackage{amsthm}
\usepackage{imakeidx}
\makeindex[title=Notation Index, intoc,columns=1]

\indexsetup{othercode=\noindent\par}

\usepackage{thmtools}
\usepackage{ifthen}

\declaretheorem[numberwithin=section]{theorem}

\usepackage{stmaryrd}
\usepackage{mathtools}
\usepackage{tikz}
\usetikzlibrary{decorations.pathreplacing}
\usetikzlibrary{patterns}

\newtheorem{lemma}[theorem]{Lemma}
\newtheorem{question}[theorem]{Question}
\newtheorem{definition}[theorem]{Definition}
\newtheorem{example}[theorem]{Example}

\newtheorem{notation}[theorem]{Notation}
\newtheorem{proposition}[theorem]{Proposition}
\newtheorem{corollary}[theorem]{Corollary}
\newtheorem{remark}[theorem]{Remark}

\newcommand{\Z}{\mathbb Z}
\newcommand{\N}{\mathbb N}

\newcommand{\notationidx}[2]{#1\index{#1@#1 --- #2|hyperpage}}

\title{Undecidability of the block gluing classes\\ of homshifts}

\author[1]{\normalsize Nishant Chandgotia} 

\affil[1]{\small
	Tata Institute of Fundamental Research - Centre for Applicable Mathematics
    \url{nishant.chandgotia@gmail.com}
	}

\author[2]{\normalsize Silvère Gangloff}

\affil[2]{
National Supercomputing Centre IT4Innovations, University of Ostrava,
	IRAFM,
	30. dubna 22, 70103 Ostrava,
	Czech Republic
    \url{silvere.gangloff@osu.cz} \url{piotr.oprocha@osu.cz}
	}

\author[3]{\normalsize Benjamin Hellouin de Menibus}

\affil[3]{
Université Paris-Saclay, CNRS, Laboratoire Interdisciplinaire des Sciences du Numérique, Orsay, France
\url{hellouin@lisn.fr}}

\author[2]{Piotr Oprocha}

\date{\today}

\begin{document}

\maketitle
\begin{abstract}
A homshift is a $d$-dimensional shift of finite type which arises as the space of graph homomorphisms from the grid graph $\Z^d$ to a finite connected undirected graph $G$. 
While shifts of finite type are known to be mired by the swamp of undecidability, homshifts seem to be better behaved and there was hope that all the properties of homshifts are decidable.
In this paper we build on the work by Gangloff, Hellouin de Menibus and Oprocha \cite{GHO23} to show that finer mixing properties are undecidable for reasons completely different than the ones used to prove undecidability for general multidimensional shifts of finite type. Inspired by the work of Gao, Jackson, Krohne and Seward \cite{gao2018continuous} and elementary algebraic topology, we interpret the square cover introduced by Gangloff, Hellouin de Menibus and Oprocha topologically. Using this interpretation, we prove that it is undecidable whether a homshift is $\Theta(n)$-block gluing or not, by relating this problem to the one of finiteness for finitely presented groups.
\end{abstract}

\noindent \textbf{Keywords}: \emph{multidimensional shifts of finite type, homshifts, mixing properties, block gluing, undecidability, finitely presented groups, topology of finite graphs}. \bigskip 

\noindent \textbf{Mathematics Subject Classification}. \emph{Primary : 37B51 (Multidimensional shifts of finite type); Secondary : 05C60 (Homomorphism problems), 68Q17 (Computational difficulty of problems), 05C25 (Graphs and groups), 05C10 (Topological aspects of graph theory).} 
 
\tableofcontents

\section{Introduction}
A homshift is a shift of finite type which is defined by means of a finite undirected connected graph $G$: vertices of $G$ are the symbols and edges of $G$ indicate which symbols are allowed to be adjacent.
Many important statistical physics models are homshifts: the hard square model, proper $k$-colorings, the iceberg model, the beach model, Lipschitz functions and the clock model. For instance the set of three-colorings of $\mathbb{Z}^d$ is the homshift associated with the triangle graph while the hard square model is the homshift associated with the graph on two vertices with an {edge} between them and a self-loop on one of them. 

Undecidability phenomena form an overarching theme over the study of multidimensional shifts of finite type, starting with the Domino problem. A large number of such problems, of combinatorial or dynamical nature, have been studied and proved to be undecidable: for example, there are shifts such that it is not decidable whether a pattern can be extended to a full configuration, or whose entropy is not a computable number. There are structural results that imply that, under mild technical conditions, every nontrivial problem regarding shifts of finite type is undecidable \cite{Delvenne, Carrasco}.

These undecidability results rely one way or another on the ability of shifts of finite type to simulate universal computation, usually a universal Turing machine. Such constructions cannot be done straightforwardly in a homshift because the simulated machine uses two distinct directions for time and space (memory), while homshifts are invariant under rotations and symmetries. As a matter of fact, the same problems become decidable for homshifts:

\begin{itemize}
\item a homshift is non-empty as long as its graph has at least one edge, which is trivially decidable;
\item a pattern can be extended
to a full configuration if and only if it can be extended to a rectangle
\cite[Lemma 3.2]{GHO23}, which means that the domino and extension
problems are decidable.
\item the topological entropy is computable from a description of the homshift (a particular case of \cite[Theorem 3.1]{Friedland}).
\end{itemize}

Furthermore, transitivity-type properties are tractable for homshifts. For instance, a homshift on a graph $G$ is transitive if and only if $G$ is connected. It is mixing if and only if $G$ is connected and not bipartite \cite{MR3743365}. Homshifts are also almost Borel universal \cite{zbMATH07433654}.

Nevertheless, understanding finer aspects of these shifts, such as for instance measures of maximal entropy and mixing properties like strong irreducibility, has been a challenge. Even for simple cases, such as for instance when $G$ is a complete graph, classifying the corresponding homshifts according to their mixing properties is a non-trivial task \cite{zbMATH07359187} and while several papers have been written for general homshifts, we are not anywhere near answering the question completely \cite{briceno2017strong, MR3743365, zbMATH07359187, GHO23}. 

This may be due to undecidability phenomena in the class of homshifts. As a matter of fact, in a recent article, Gao, Jackson, Krohne and Seward \cite{gao2018continuous} showed that it is undecidable whether or not there is a continuous equivariant factor from the free part of the full shift to $X^2_G$. That work took inspiration from algebraic topology and defined the so-called reduced homotopy group of the graph $G$ (which we call the \emph{square group} in the present paper) and provided necessary conditions and sufficient conditions for such a factor to exist. The groupoid version of the square group appeared earlier in the context of topological graph theory \cite{zbMATH06656178}. The square group is however distinct though similar to the A-homotopy group which considers graphs where all the vertices have self-loops \cite{zbMATH02207449}.

 We are mainly interested in \emph{strong irreducibility} - meaning there exists a constant $K$ such that any two admissible patterns separated by a distance at least $K$ together form an admissible pattern -  as this property has many important consequences. For instance, any shift which is strongly irreducible is entropy minimal \cite{quas2000shifts} and any of its stationary Gibbs measures for a summable interaction is an equilibrium state \cite{dobrushin1968gibbsian}. The following question is the main motivation for our research:

\begin{restatable}{question}{mainquestion}
\label{question: main question}
Is it decidable whether the homshift $X^d_G$ is strongly irreducible?
\end{restatable}

We do not answer this question, but prove that a similar property is undecidable for homshifts, hinting that this problem is also undecidable. Quantified block gluing is a mixing property which was studied by Gangloff, Hellouin de Menibus and Oprocha \cite{GHO23} for homshifts, where the authors proved that a two-dimensional homshift is either $\Theta(n)$-phased block gluing or $O(\log(n))$-phased block gluing. This dichotomy is based on the finiteness or infiniteness of the square cover, a particular quotient of the universal cover by the squares in the graph. That work used an earlier result \cite{MR3743365} that the block gluing rate corresponds to the growth of the diameter of the graph $G_n$ whose vertices are the walks of length $n$ on $G$ and in which two walks are adjacent when they are pointwise adjacent. The result in \cite{GHO23} relied on proving that the growth rate of the diameter of $G_n$ exhibits the following dichotomy: it is $O(\log(n))$ if and only if the square cover is finite and it is $\Theta(n)$ if and only if the square cover is infinite. 

The strips of width $n$ in $X^2_G$ correspond to a one-dimensional homshift associated to the graph $G_n$, which can be seen as an approximation of $X^2_G$. The asymptotic behaviour of the diameter of $G_n$, that is, the block-gluing rate, is related to the spectrum of the corresponding transfer matrix and how fast one can approximate the entropy of $X^2_G$. For general two-dimensional shifts of finite type, an upper bound on the speed of computability of $O(1)$-block gluing shifts is known \cite{PavlovSchraudner}.

In this article, we prove that the dichotomy of the growth rate of the diameter of $G_n$ is undecidable, answering a question left open in \cite{GHO23}. This result is based on formally relating the square cover with the square group via a `Galois correspondence' of graph covers. In particular the square group is finite if and only if the square cover is finite. 

Furthermore, every finitely presented group is the square group of a finite connected undirected graph. We obtain the following as a consequence of the fact that it is not possible to decide, from its presentation, whether a finitely presented group is finite or infinite:

\begin{restatable}{theorem}{main}
\label{thm:main}
The classes of $\Theta(n)$-block gluing and $O(\log(n))$-block gluing two-dimensional homshifts are computably inseparable. That is, it is not possible to algorithmically decide if a two-dimensional homshift is $\Theta(n)$-block gluing or $O(\log(n))$-block gluing.
\end{restatable}

For general SFTs, \cite{Paviet} proved that every finitely presented group is the projective fundamental group, introduced by Geller and Propp \cite{geller-propp}, of a two-dimensional SFT. We do not know if our results are related - in particular if the square group is related to the projective fundamental group. However, their construction breaks symmetries and thus might not be easily adapted to homshifts. Furthermore, the fact that our result is not the product of a systematic undecidability phenomenon, as in SFT, suggests that the source of undecidability is qualitatively of a different, more algebraic, nature. 

Most of the constructions presented here are paralleled in algebraic topology. However there are a few important subtleties to note. Most of the constructions (whether in standard algebraic topology books or \cite{zbMATH06656178, gao2018continuous,zbMATH07969288}) don't take self-loops into account in a manner suited for dealing with graph homomorphisms, but many important examples of homshifts are defined with graphs having self-loops. This requires reworking of some standard ideas (Remark \ref{remark: Self-loop problem}). Finally we construct graphs whose square group is any given finitely presented group. While the corresponding construction is well-known in algebraic topology (see \cite{rotman2013introduction} for instance) and has been adapted in \cite{zbMATH06656178, gao2018continuous} to the graph setting, we wanted to provide a self-contained construction tailor-made for our purpose, namely, towards proving that strong irreducibility is undecidable for homshifts. Our construction is fairly general and should be adaptable to exhibit all sorts of properties of homshifts via minor modifications of the parameters - possibly generalizations beyond $\Z^d$ as well. One of the key technical issues in our construction is taking special care to avoid creating extraneous squares which can possibly modify the square group in an irregular way. Finally, we have written the paper for an audience who might not be well-versed in algebraic topology and have made attempts to sketch proofs even for somewhat standard ideas whenever possible.

The remainder of the text is structured as follows: Section \ref{section.square.group.def} contains some elementary background and notations for graphs, shifts and finitely presented groups; in Section \ref{section.background.algebraic.topology}, we provide a detailed and self-contained account of algebraic topology on graphs, introducing in particular the fundamental group and universal covers; in Section \ref{section.square.quotients}, we define the notion of square cover and other similar notions which can be found in \cite{GHO23} and additional ones (in particular the notion of square group); we turn in Section \ref{section.undecidability} to a proof of Theorem \ref{thm:main}, which relies on a rigorous and fully detailed proof that every finitely presented group is the square group of a graph. We leave some questions open in Section \ref{section.open.questions}.

\section{\label{section.square.group.def} Elementary background and notations}

In this section, we define background notions related to graph theory (Section~\ref{section.background.graphs}), 
shifts (Section~\ref{section.background.shift.spaces}), finitely presented groups (Section~\ref{section.background.group.presentations}). 

\subsection{Graphs\label{section.background.graphs}}

For every graph $G$, we denote by \notationidx{$V_G$}{vertex set of the graph $G$} the set of its vertices and \notationidx{$E_G$}{edge set of the graph $G$} the set of its edges, where an edge is a tuple $(u,v)$ with $u,v \in V_G$. We say that the graph is \textbf{undirected} when for all $u,v$ vertices in $V_G$, if $(u,v) \in E_G$ then $(v,u) \in E_G$. We say that it is \textit{finite} when both $V_G$ and $E_G$ are finite. 

In the remainder of this article, graphs denoted by $G$ are assumed to be finite, undirected and connected. Thus whenever we remove or add an edge $(u,v)$ in the graph $G$ we also remove or add its reverse $(v,u)$ even if we don't explicitly mention it.\bigskip 

A \textbf{graph homomorphism} from an undirected graph $G'$ to another undirected graph $G'$ is a map  \notationidx{$\phi : V_G \rightarrow V_{G'}$}{graph homomorphism from $G$ to $G'$} such that for all $u,v \in V_G$, if $(u,v) \in E_G$ then $(\phi(u),\phi(v)) \in E_{G'}$. In order to simplify the notation, we will often write $v\in G$ instead of $v \in V_G$ and  \notationidx{$\phi : G \to G'$}{graph homomorphism from $G$ to $G'$ (simplified notation)} instead of $\phi : V_G \to V_{G'}$ for a homomorphism. For all $a\in V_G$, we call \textbf{neighborhood} of $a$ the set \notationidx{$N_G(a)$}{neighborhood of $a$ in $G$} $\coloneqq \{b\in V_G~:~(a,b) \in E_G\}$.
A \textbf{walk} on $G$ is a finite word $p = p_0 \cdots p_n$ of vertices of $G$ such that for all $i < n$, $(p_i,p_{i+1})$ is an edge of $G$. The integer $n$ is called the \textbf{length} of $p$ and is denoted by \notationidx{$l(p)$}{length of a walk $p$}. Such a walk $p$ is called a \textbf{cycle} 
when $p_{l(p)} = p_0$. Such a cycle is said to be \textbf{simple} when for all $i<j$, if $p_i = p_j$ then $i=0$ and $j=l(p)$. 

A cycle of length two is called a \textbf{backtrack}. A walk which does not contain any backtrack is called non-backtracking. A \textbf{square} is a non-backtracking cycle of length four. The empty walk {(that we also see as empty cycle)} is denoted by  \notationidx{$\varepsilon$}{the empty cycle}. We say that a graph $G$ is connected when for all $u,v \in G$, there exists a walk $p$ on $G$ such that $p_0 = u$ and $p_{l(p)} = v$.

\begin{notation}\label{notation:odot}
    For any two walks $p,q$ such that $p_{l(p)} = q_0$, we denote by \notationidx{$p \odot q$}{Notation for the `concatenation` operation on walks} the walk $p_0 \ldots p_{l(p)} q_1 \ldots q_{l(q)}$ and by $p^{-1}$ the reverse walk $p_{l(p)} \ldots p_0$. 
\end{notation}

\begin{notation}
    We denote by $\varphi$ the function such that to any walk $p$ associate the walk $\varphi(p)$ obtained by replacing successively all backtracks $aba$ by $a$. It is not difficult to check that the order of removal does not change the resulting non-backtracking walk, so $\varphi$ is well-defined. Furthermore, for all walks $p,q$ such that $p_{l(p)} = q_0$, we set \notationidx{$p \star q$}{Notation for the word obtained by removing backtracks from the walk concatenation of $p$ with $q$} $\coloneqq \varphi(p \odot q)$. The operation $\star$ is associative. 
\end{notation}
 
\begin{remark}
Removing backtracks from two cycles that are equal up to circular shift may yield different cycles. Consider for instance consider the graph in Figure~\ref{fig.cycle.jumble}: we have that $\varphi(abcdba)=abcdba$, but $\varphi(bcdbab) = bcdb$.
\end{remark}

\begin{figure}[!ht]
    \captionsetup{labelsep=none}
    \centering
    \begin{tikzpicture}[scale=0.4]
        \draw (0,0) -- (-1.5,-2) -- (1.5,-2) -- (0,0) -- (0,1.5);
        \draw[fill=gray!40] (0,0) circle (3pt);
        \draw[fill=gray!40] (0,1.5) circle (3pt);
        \draw[fill=gray!40] (-1.5,-2) circle (3pt);
        \draw[fill=gray!40] (1.5,-2) circle (3pt);
        \node[scale=0.9] at (-2,-2.5) {c};
        \node[scale=0.9] at (2,-2.5) {d};
        \node[scale=0.9] at (0.5,0.15) {b};
        \node[scale=0.9] at (0.5,1.5) {a};
    \end{tikzpicture}
    \caption{\label{fig.cycle.jumble}}
\end{figure}

\subsection{Shift spaces \label{section.background.shift.spaces}}

For every finite set \notationidx{$\mathcal{A}$}{standard notation for the alphabet of a shift} and integer $d \ge 1$, we call the \textit{full shift} on alphabet $\mathcal{A}$ the set $\mathcal{A}^{\mathbb{Z}^d}$ endowed with the infinite product of discrete topologies. We call its elements \textbf{configurations}. 
The \textbf{shift action} on $\mathcal{A}^{\mathbb{Z}^d}$ is the group action \notationidx{$\sigma$}{the shift action} : $\mathbb{Z}^d \times \mathcal{A}^{\mathbb{Z}^d} \rightarrow \mathcal{A}^{\mathbb{Z}^d}$, defined by 
$\sigma(\boldsymbol{u},x) = (x_{\boldsymbol{u}+\boldsymbol{v}})_{\boldsymbol{v} \in \mathbb{Z}^d}$ for all $\boldsymbol{u} \in \mathbb{Z}^d$ and $x \in \mathcal{A}^{\mathbb{Z}^d}$. We will also denote $\sigma(\boldsymbol{u},x)$ by $\sigma_{\boldsymbol{u}}(x)$. A $d$-dimensional \textbf{shift} on alphabet $\mathcal{A}$ is a compact subspace \notationidx{$X$}{standard notation for a shift} of $\mathcal{A}^{\mathbb{Z}^d}$ which is stable under the shift action, meaning that for all $\boldsymbol{u} \in \mathbb{Z}^d$, 
$\sigma_{\boldsymbol{u}} (X) \subset X$.

\begin{notation}
For every graph $G$, we denote by \notationidx{$X^d_G$}{the $d$-dimensional homshift associated with graph $G$} the shift with alphabet $V_G$ defined as the set of configurations $x \in {V_G}^{\mathbb{Z}^d}$ such that for all $\boldsymbol{u},\boldsymbol{v} \in \mathbb{Z}^d$ which are neighbors (here we see $\mathbb{Z}^d$ as the grid graph), we have $(x_{\boldsymbol{u}},x_{\boldsymbol{v}}) \in E_G$. In other words, $X^d_G$ is the set of graph homomorphisms from $\mathbb{Z}^d$ to $G$. We call $X_G^d$ the $d$-dimensional \textbf{homshift} associated with $G$.
\end{notation}

For two subsets $\mathbb{U}$ and $\mathbb{U}'$ of 
$\mathbb{Z}^d$, we set $\delta(\mathbb{U},\mathbb{U}') \coloneqq \min_{\substack{\textbf{u} \in \mathbb{U}\\ \textbf{u}' \in \mathbb{U}'}} \|\textbf{u}-\textbf{u}'\|_{\infty}$.
We denote by $\N$ the set of natural numbers that is $\{n \in \Z~:~n \geq 0\}$ and by $\N^*$ the set of positive natural numbers.

\begin{definition}
Let us consider a function $f : \mathbb{N}^{*} \rightarrow \mathbb{N}^{}$, and an integer $k \in \mathbb{N}^{*}$. A shift space is said to be $(f,k)$-\textbf{phased block gluing} when, for every globally admissible block patterns $p$ and $p'$ of the same size $n$, 
and $\textbf{u},\textbf{u}' \in \mathbb{Z}^d$ such that 
\[\delta\left(\textbf{u} + \llbracket 0,n-1\rrbracket^d, \textbf{u}' + \llbracket 0,n-1\rrbracket^d \right) \ge f(n),\]
there exists some $x\in X$ and some $\textbf{v} \in \mathbb{Z}^d$
such that $\|\textbf{v}\|_{\infty} < k$, $x_{\textbf{u} + \llbracket 0,n-1\rrbracket^d}=p$ and $x_{\textbf{u}'+\textbf{v} + \llbracket 0,n-1\rrbracket^d}=p'$. A shift which is $(f,1)$-\textbf{phased block gluing} for some $f$ is simply said to be $f$-\textbf{block gluing}. A shift which is $(f,k)$-phased block gluing for some $f$ and $k \ge 1$ is said to be \textbf{phased block gluing}.
\end{definition}

We are using the Landau notations $o, O$ and $\Theta$ throughout the article.

\begin{definition}
A shift $X$ is said to be $(\Theta(g),k)$-phased block gluing (resp. $(O(g),k)$-block gluing) when it is $(f,k)$-phased block gluing with $f \in \Theta(g)$ (resp. $O(g)$).
\end{definition}

The following result follows from the proofs in \cite[Section 3]{MR3743365} although it is not stated explicitely.

\begin{lemma}
    For all $d > 1$ and every finite undirected graph $G$, the homshift $X_G^d$ is $O(n)$-phased block gluing.
\end{lemma}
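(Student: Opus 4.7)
The plan is to construct, given two globally admissible patterns $p$ and $p'$ of side $n$ placed at positions $\mathbf{u}, \mathbf{u}'$ separated by at least $f(n) = Cn$ for a suitable constant $C$, an explicit configuration of $X_G^d$ containing $p$ at $\mathbf{u}$ and a translate of $p'$ at $\mathbf{u}' + \mathbf{v}$ with $\|\mathbf{v}\|_\infty$ bounded. Since both patterns are globally admissible, they extend to full configurations $y, y' \in X_G^d$, and the problem reduces to modifying $y$ and $y'$ consistently in a slab between them. After reducing to the case where the two $n$-cubes are separated along a coordinate axis (the general case costs at most a constant multiplicative factor in $f$), the remaining gap is a prism of length $m \ge f(n)$ across cross-section $\llbracket 0, n-1\rrbracket^{d-1}$.

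The bridge through this prism proceeds via a canonical \emph{checkerboard target}: pick any edge $(a,b) \in E_G$ and set $T(\mathbf{z}) = a$ if $\|\mathbf{z}\|_1$ is even, $T(\mathbf{z}) = b$ otherwise; this defines a configuration of $X_G^d$. Let $L = \operatorname{diam}(G)$ and, for each $v \in V_G$, fix a walk $W_v$ of uniform length $L'$ (of order $L$, padded using the edge $(a,b)$ with parity chosen compatibly with bipartiteness) from $v$ to its parity-matched endpoint in $\{a,b\}$. The bridge is then defined by a \emph{diagonal wave}: at a position $(t, \mathbf{w})$ in the prism, the value is $y_{(t, \mathbf{w})}$ when $t - n < \|\mathbf{w}\|_1$, follows $W_v$ starting from $v = y_{(n + \|\mathbf{w}\|_1, \mathbf{w})}$ for the next $L'$ steps, then takes the checkerboard value $T(t, \mathbf{w})$; a mirror-image wave from the other side brings $T$ back to $y'$ near $p'$. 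The required prism length is at most $2 \max_{\mathbf{w}} \|\mathbf{w}\|_1 + 2L' = O(n)$, which dictates $f(n)$.

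The main obstacle is verifying that adjacent columns remain pointwise adjacent throughout the wave. Since adjacent positions $\mathbf{w}, \mathbf{w}'$ in the cross-section differ by one in $\ell^1$-norm, adjacent columns are always at consecutive wave stages; within a single stage (``still showing $y$'', ``walking $W_v$'', or ``settled to $T$''), compatibility follows respectively from $y$ being a valid configuration, from a compatible choice of walks, or from $(a,b) \in E_G$. The delicate step is ensuring that whenever $(v, v') \in E_G$, the walks $W_v$ and $W_{v'}$ are pointwise adjacent at every time step so that the wave front between adjacent columns is itself a valid edge; this can be arranged by a canonical construction — for instance, routing $W_v$ along a shortest-path spanning tree of $G$ rooted at $a$ with coherent back-and-forth padding — and this is essentially what must be extracted from the proofs of \cite[Section 3]{MR3743365}. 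A phase shift $\|\mathbf{v}\|_\infty < k = 2$ absorbs the parity mismatch between $T$ and the placement of $p'$ when $G$ is bipartite, completing the $(O(n), k)$-phased block gluing property.
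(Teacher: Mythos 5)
The paper itself gives no proof of this lemma; it simply points to \cite[Section 3]{MR3743365}. Your blind construction via a checkerboard target and per-vertex homing walks $W_v$ is a reasonable approach, broadly in the spirit of the walk-graph argument of that reference, but the wave front you set up and the compatibility condition you say it requires do not match. With the diagonal wave --- column $\mathbf w$ switches from $y$ to $W_v$ at time $t = n + \|\mathbf w\|_1$, starting from $v = y_{(n+\|\mathbf w\|_1,\mathbf w)}$ --- the starting vertices of two adjacent columns $\mathbf w$ and $\mathbf w' = \mathbf w + \mathbf e_i$ (with $\|\mathbf w'\|_1 = \|\mathbf w\|_1 + 1$) sit at positions $(n+\|\mathbf w\|_1,\mathbf w)$ and $(n+\|\mathbf w\|_1 + 1,\mathbf w')$, which are at $\ell^1$-distance $2$ in $\mathbb Z^d$. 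So $v$ and $v'$ are at graph distance at most $2$ (and of the same bipartition class when $G$ is bipartite), not necessarily adjacent; moreover the two walks are read at indices offset by one. The adjacency you actually need is $(W_v(s), W_{v'}(s-1)) \in E_G$ for all $s$ and for all such same-parity pairs $v,v'$ at distance $\le 2$. Your stated ``delicate step'' --- that $(v,v')\in E_G$ should imply $W_v$ and $W_{v'}$ pointwise adjacent at every index --- is a different requirement; establishing it would not make the diagonal wave valid.

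That stated requirement is in fact the correct one for a \emph{vertical} wave front: start every column's walk at the same time $t=n$ from $v_{\mathbf w} = y_{(n,\mathbf w)}$, so adjacent columns progress in lockstep and their starting vertices are genuinely adjacent in $G$. The $y$-to-$W$ boundary still matches (since $W_{v_{\mathbf w}}(0) = v_{\mathbf w} = y_{(n,\mathbf w)}$ simply continues the walk already present in $y$), and the hard lemma becomes: there is a graph homomorphism $v\mapsto W_v$ from $G$ to the walk graph $G_{L'}$ with $W_v(0)=v$ and $W_v(L')\in\{a,b\}$ of the right parity --- precisely the kind of statement established in \cite[Section 3]{MR3743365}. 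If you replace your diagonal wave by the vertical one, the outline closes up against that lemma; as written, the wave parametrization and the compatibility condition you invoke do not fit together, and neither the stated nor the actually-needed condition is proved.
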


The following can be deduced from \cite[Lemma 3.5 and Proposition 3.6]{GHO23}:

\begin{lemma}\label{lemma.phase.to.non.phase}
    For all $d >1$, and any function $f : \mathbb{N} \rightarrow \mathbb{N}$, the homshift $X^d_G$ is $f(n)$-block gluing if and only if it is $f(n)$-phased block gluing and $G$ is not bipartite.
\end{lemma}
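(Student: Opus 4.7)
The plan is to prove both implications separately, with direction ($\Rightarrow$) being essentially a parity obstruction argument and direction ($\Leftarrow$) requiring more care in exploiting non-bipartiteness to absorb the phase.

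For the forward direction, the implication that $f$-block gluing yields $f$-phased block gluing is immediate from the definitions (taking $\mathbf{v} = \mathbf{0}$ with $k = 1$). So the content is to show $G$ is non-bipartite, which I would prove by contraposition. Suppose $G$ is bipartite with bipartition $V_G = V_0 \sqcup V_1$. Then every $x \in X_G^d$ obeys a global parity constraint: once the class of $x_{\mathbf{0}}$ is fixed, $x_{\mathbf{u}}$ must lie in $V_0$ or $V_1$ according to the parity of $u_1 + \dots + u_d$. Pick any $a \in V_0$ and consider the size-one pattern $p = (a)$, which is trivially globally admissible. Place $p$ at $\mathbf{u} = \mathbf{0}$ and at $\mathbf{u}' = (N, 0, \dots, 0)$ with $N$ any odd integer at least $f(1)$. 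Any configuration realizing both would force $a$ to belong to both $V_0$ and $V_1$, which is impossible. Hence $X_G^d$ is not $f$-block gluing, establishing the contrapositive.

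For the reverse direction, assume $X_G^d$ is $(f,k)$-phased block gluing and $G$ is non-bipartite; the goal is to upgrade this to $(f,1)$-phased, i.e.\ $f$-block gluing. Given globally admissible patterns $p, p'$ of size $n$ and positions $\mathbf{u}, \mathbf{u}'$ with $\delta(\mathbf{u} + \llbracket 0, n-1 \rrbracket^d, \mathbf{u}' + \llbracket 0, n-1 \rrbracket^d) \ge f(n)$, apply the phased property to obtain $x \in X_G^d$ and $\mathbf{v}$ with $\|\mathbf{v}\|_\infty < k$ realizing $p$ at $\mathbf{u}$ and $p'$ at $\mathbf{u}' + \mathbf{v}$. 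I would build a new configuration $\tilde{x}$ which coincides with $x$ on a neighborhood of $\mathbf{u}$ and with the translate $\sigma_{-\mathbf{v}}(x)$ on a neighborhood of $\mathbf{u}'$, so that $p'$ now sits exactly at $\mathbf{u}'$; the two pieces must be reconciled on a transitional slab of bounded width between them.

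The heart of the argument, and the main obstacle, is the gluing across this transitional slab. Translating by $\mathbf{v}$ can change the parity of $\sum u_j$, which is precisely the obstruction a bipartite $G$ would present. Since $G$ is non-bipartite, it contains an odd cycle, and it is a classical consequence that for every vertex of $G$ there are closed walks of all sufficiently large lengths. I would use this to build explicit interpolating walks along each line of the slab orthogonal to $\mathbf{v}$, inserting a traversal of an odd cycle where needed to reset parity. Verifying that the resulting interpolation does not violate any edge of the grid graph is the main technical point: one must choose the walks compatibly along adjacent lines so that adjacencies are respected in every coordinate direction, which is essentially the same mechanism that underlies \cite{MR3743365}'s characterization of mixing for homshifts as non-bipartiteness of $G$. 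Because the width of the slab depends only on $k$ and the length of a shortest odd cycle of $G$ (both independent of $n$), and because $f(n)$ only governs the distance between the patterns, the construction fits within the separation afforded by $(f,k)$-phased block gluing, yielding the desired $f$-block gluing directly.
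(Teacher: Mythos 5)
Note first that the paper does not supply its own proof of this lemma; it is cited from \cite[Lemma 3.5 and Proposition 3.6]{GHO23}, so there is no in-paper argument to compare yours against. Your forward direction is nonetheless correct: $f$-block gluing trivially implies $f$-phased block gluing, and the parity obstruction for a bipartite connected $G$, applied to the size-one pattern $(a)$ placed at two sites of opposite coordinate-sum parity at $\ell^\infty$-distance $\ge f(1)$, rules out $f$-block gluing.

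The reverse direction, however, has a real gap. You propose to apply $(f,k)$-phased block gluing, obtaining $x$ with $p$ at $\mathbf{u}$ and $p'$ at $\mathbf{u}'+\mathbf{v}$, and then to reconcile $x$ near $\mathbf{u}$ with a translate of $x$ near $\mathbf{u}'$ across a transitional slab whose width $c$ depends only on $G$ and $k$. But the hypothesis only gives you $\delta\big(\mathbf{u}+\llbracket 0,n-1\rrbracket^d,\mathbf{u}'+\llbracket 0,n-1\rrbracket^d\big)\ge f(n)$, and nothing prevents $f(n) < c$ (for instance if $f$ is bounded, or for small $n$), in which case the slab you describe does not fit between the two blocks; you do not explain how the reconciliation could be routed around them. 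At best this construction would yield $(f+C)$-block gluing for some constant $C=C(G,k)$, which is strictly weaker than the stated equivalence with the same $f$ on both sides. Separately, the compatibility of the interpolating walks across adjacent lines of the slab in all $d$ coordinate directions --- which you correctly single out as the technical heart --- is only asserted by analogy with \cite{MR3743365}, a qualitative mixing result that does not by itself supply the required width control; this verification needs to be carried out explicitly.
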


\subsection{Finitely presented groups\label{section.background.group.presentations}}

Provided a finite set $\mathcal{A}$, we denote the \textbf{free group} generated by $\mathcal{A}$ by $\mathbb{F}_{\mathcal{A}}$. All the free groups generated by a set of cardinality $k$ are isomorphic and we denote by \notationidx{$\mathbb{F}_k$}{the free group with $k$ generators} their isomorphic class, which we assimilate to any of its elements. Provided a group $\mathbb{G}$, a subgroup $\mathbb{H}$ is said to be \textbf{normal} when for all $h \in \mathbb{H}$ and $g \in \mathbb{G}$, $g h g^{-1} \in \mathbb{H}$. Given a finite subset $R$ of $\mathbb{G}$, we denote by $\mathbb{G}/R$ the quotient of $\mathbb{G}$ by the smallest normal subgroup containing $R$. A group $\mathbb{G}$ is said to be \textbf{finitely presented} when it is the quotient $\mathbb{F}_E/R$ of some free group $\mathbb{F}_E$ for a fixed finite set $R \subset \mathbb{F}_E$. We say that $(E,R)$ is a \textit{presentation} of $\mathbb{G}$ and we denote this by $\mathbb{G} =$ \notationidx{$\langle E:R\rangle$}{group generated by generator set $E$ and relation set $R$}. \textit{Note that traditionally $\langle E:R\rangle$ is also called a presentation. In this text, we distinguish a notation for presentations and for groups, as it is crucial to make our results clear.}

Provided two finitely presented groups $\mathbb{G},\mathbb{H}$, the \textbf{free product} of $\mathbb{G} = \langle E : R\rangle$ and $\mathbb{H} = \langle E' : R'\rangle$, where $E, E'$ are disjoint, is the group \notationidx{$\mathbb{G} * \mathbb{H}$}{free product of two groups $\mathbb{G}$,$\mathbb{H}$}$=\langle E \cup E' : R\cup R' \rangle$ where the set $R,R'$ are seen as subset of the free group generated by $E \cup E'$.

We use the following technical lemma in many proofs: 

\begin{lemma}\label{lemma.mod.presentation}
    Let $(E, R)$ be a finite presentation of a group, and:
    \begin{enumerate}
        \item For some $r \in R$ written as $r=w e w'$ where $e$ does not appear in $w$ or $w'$, then, $\langle E : R \rangle = \langle \tilde E : \tilde R \rangle$, where $\tilde E = E \backslash \{e\}$ and $\tilde R$ is obtained from $R$ by removing $r$ and replacing every occurrence of $e$ in other elements of $R$ with $w^{-1} (w')^{-1}$.
        \item { If $r \in R$ belongs to} the smallest normal subgroup of $\mathbb F_E$ containing $R\setminus \{r\}$, we have $\langle E : R \rangle = \langle E : R \backslash \{r\} \rangle$.
    \end{enumerate} 
\end{lemma}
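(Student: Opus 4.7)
The plan is to recognize both statements as instances of Tietze transformations and handle them by constructing explicit homomorphisms (for part 1) or by observing equality of normal closures (for part 2).

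For part (2) the argument is immediate. The group $\langle E:R\rangle$ is by definition $\mathbb{F}_E/N$ where $N$ is the smallest normal subgroup of $\mathbb{F}_E$ containing $R$, and similarly $\langle E:R\setminus\{r\}\rangle = \mathbb{F}_E/N'$ where $N'$ is the smallest normal subgroup containing $R\setminus\{r\}$. Clearly $N'\subseteq N$; conversely, by the hypothesis $r\in N'$, so $R\subseteq N'$, giving $N\subseteq N'$ by minimality. Hence $N=N'$ and the two quotients coincide on the nose.

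For part (1) the plan is to exhibit mutually inverse homomorphisms between $\mathbb{G}:=\langle E:R\rangle$ and $\tilde{\mathbb{G}}:=\langle\tilde{E}:\tilde{R}\rangle$. Define $\iota:\mathbb{G}\to\tilde{\mathbb{G}}$ on generators by $e'\mapsto e'$ for every $e'\in\tilde{E}$ and $e\mapsto w^{-1}(w')^{-1}$, and define $\pi:\tilde{\mathbb{G}}\to\mathbb{G}$ by $e'\mapsto e'$ for every $e'\in\tilde{E}\subset E$. By the universal property of presentations, both extend to group homomorphisms provided each is well-defined, i.e. sends every defining relation of the source to the identity. For $\iota$: the image of $r=wew'$ becomes $w\cdot w^{-1}(w')^{-1}\cdot w'$, which is trivial already in the free group $\mathbb{F}_{\tilde{E}}$ (this is where we use that $e$ does not occur in $w$ or $w'$, so that $w,w'$ are genuinely words in $\tilde{E}$); for any other $r''\in R\setminus\{r\}$, the image $\iota(r'')$ is exactly the substituted word that belongs to $\tilde{R}$ by construction, hence trivial in $\tilde{\mathbb{G}}$. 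For $\pi$: any defining relation in $\tilde{R}$ arises from some $r''\in R\setminus\{r\}$ by substituting $e\mapsto w^{-1}(w')^{-1}$, and in $\mathbb{G}$ the relation $r=1$ forces $e = w^{-1}(w')^{-1}$, so this substituted word equals $r''$ in $\mathbb{G}$, which is trivial.

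Finally, one checks on generators that $\pi\circ\iota$ and $\iota\circ\pi$ are identity maps. The only non-obvious verification is $\pi\circ\iota(e)=w^{-1}(w')^{-1}$ inside $\mathbb{G}$, which equals $e$ by the relation $r=1$. The main point to watch, rather than any genuine obstacle, is the bookkeeping around the substitution: the assumption that $e$ does not appear in $w$ or $w'$ is exactly what guarantees that $w^{-1}(w')^{-1}$ is a well-defined element of $\mathbb{F}_{\tilde{E}}$, so that substituting $e$ by this expression in the remaining relations yields words in the smaller alphabet $\tilde{E}$ and the whole construction stays internally consistent.
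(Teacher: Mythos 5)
Your proof is correct and takes essentially the same route as the paper: the paper's sketch considers the homomorphism $\phi:\mathbb{F}_E\to\langle\tilde E:\tilde R\rangle$ defined exactly as your $\iota$ (lifted to the free group) and asserts its kernel is the normal closure of $R$, which is precisely what your explicit two-sided verification with $\iota$ and $\pi$ establishes. You have simply unpacked the details that the paper leaves to the reader; Part (2) is treated identically in both.
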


\begin{proof}
Point 1 follows from the fact that the kernel of the homomorphism $\phi: {\mathbb F}_E\to \langle \tilde E: \tilde R\rangle$ given by 
\[\phi(f)=\begin{cases}
    f\text{ if } f\in E\setminus \{e\}\\
    w^{-1} (w')^{-1}\text{ if }f=e
\end{cases}\]
is the smallest normal subgroup containing $R$. 
Point 2 is straightforward.
\end{proof}

\section{Algebraic topology background\label{section.background.algebraic.topology}}

In this section we provide necessary background on algebraic topology on graphs, which will be useful in Section \ref{section.square.quotients}. We introduce graph covers in Section \ref{sec:covers}, then fundamental groups and universal covers in Section \ref{sec:universal.covers}. 
Section \ref{section:regular.covers} is devoted to regular graph covers.

\subsection{Fundamental group of a graph}\label{sec:fundamental_group}

Roughly speaking, the fundamental group $\pi_1(\mathcal{T})$ of an (arcwise connected) topological space $\mathcal{T}$ based on $a\in \mathcal{T}$ is the set of loops (continuous paths) from $a$ to $a$ endowed with the concatenation operation, where two loops are considered equal if they can be continuously deformed into one another (notion of homotopy). This forms a group which is independent from $a$, up to isomorphism. For formal definitions and standard properties of fundamental group the reader is referred to any textbook on algebraic topology (e.g. see \cite{Hatcher,rotman2013introduction}).

In this section, we provide a more concrete description of a related concept of the fundamental group of a graph.

\begin{notation}
    For every $a \in V_G$, let $\pi_1(G)[a]$ denote the set of non-backtracking cycles of $G$ which begin and end at $a$. We endow this set with the operation $(p,p') \mapsto p\star p'$ which makes it a group whose identity element is the cycle of length zero $a$ and in which the inverse of a cycle $p$ is $p^{-1}$. 
\end{notation}

\begin{definition}[Fundamental Group]
All the groups  $\left(\pi_1(G)[a],\star\right)$ are isomorphic. Indeed, for two vertices $a,b$ and a walk $p$ from $b$ to $a$, the map $c\mapsto p\star c\star p^{-1}$ is a group isomorphism from $\left(\pi_1(G)[a],\star\right)$ to $\left(\pi_1(G)[b],\star\right)$. 
We call \textbf{fundamental group} of $G$ their equivalence class and denote it by $\pi_1(G)$.
\end{definition}

In practice, in order to simplify the notations, we will drop the vertex $a$ - that we call the \textbf{base vertex} - in the notation and write $\pi_1(G)$. For all the notations introduced below, we will also drop the base point after introducing them. Any statement that we formulate is true for all the base vertices.

A spanning tree of a graph $G$ is a connected graph $T$ such that $V_T = V_G$ and $E_T \subset E_G$ which has no nontrivial simple cycle.

\begin{notation}
    Provided a spanning tree $T$ of a graph $G$, for all $a \in V_G$, we denote by $p^a_T(b)$ the unique non-backtracking walk on $T$ which begins at $a$ and ends at $b$. This notation is illustrated in Figure~\ref{fig.example.p.func}. 
\end{notation}

The following result is well-known and can be derived from standard material in \cite{MR0695906}. 

\begin{notation}
For any graph $G$ and $T$ a spanning tree of $G$, we denote by $R_T(G)$ the following set: 
\[R_T(G) \coloneqq E_T \cup \{e e'~:~e,e'\in E_G\wedge (\exists u,v \in V_G : e=(u,v),e'=(v,u))\}.\]
\end{notation}
\begin{proposition}
\label{proposition: Free product}
For every graph $G$, we have: 
\[\pi_1(G)\cong \langle E_G : R_T(G)\rangle \cong \langle E_G\backslash E_T : R_T(G)\setminus E_T\rangle.\]
\end{proposition}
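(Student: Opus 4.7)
I plan to prove the isomorphism by exhibiting an explicit map in each direction and checking they are mutual inverses.

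The forward map: Define $\psi: \mathbb{F}_{E_G} \to \pi_1(G)[a]$ on each generator by
\[
\psi(e) \;=\; \varphi\bigl(p^a_T(u) \odot e \odot p^a_T(v)^{-1}\bigr) \quad \text{when } e=(u,v),
\]
i.e.\ we walk along $T$ from $a$ to $u$, cross the edge $e$, and walk back along $T$ from $v$ to $a$, then remove backtracks so the result lies in $\pi_1(G)[a]$. Extend to arbitrary words by $\psi(w_1 w_2)=\psi(w_1)\star \psi(w_2)$; associativity of $\star$ makes this a well-defined homomorphism from the free group $\mathbb{F}_{E_G}$.

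The next step is to verify that every relator in $R_T(G)$ is sent to the trivial cycle $a$, so that $\psi$ descends to a homomorphism $\bar\psi: \langle E_G:R_T(G)\rangle \to \pi_1(G)[a]$. For a tree edge $e=(u,v)\in E_T$, by uniqueness of non-backtracking paths in a tree either $p^a_T(v)=p^a_T(u)\odot (u,v)$ or $p^a_T(u)=p^a_T(v)\odot (v,u)$, so after applying $\varphi$ the walk $\psi(e)$ reduces to the empty loop. For a reverse-edge pair $e=(u,v)$, $e'=(v,u)$, the concatenation $\psi(e)\odot \psi(e')$ contains the subword $p^a_T(v)^{-1}\odot p^a_T(v)$ in the middle, which cancels entirely under $\varphi$, leaving $\varphi(p^a_T(u)\odot (u,v)\odot (v,u)\odot p^a_T(u)^{-1})=a$.

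For the inverse map, define $\chi: \pi_1(G)[a] \to \langle E_G:R_T(G)\rangle$ by sending a non-backtracking cycle $p = a\, p_1 \cdots p_{n-1}\, a$ to the class of the word
\[
(a,p_1)(p_1,p_2)\cdots (p_{n-1},a).
\]
I would show this is a group homomorphism by first observing that concatenation of walks corresponds to concatenation of edge-words, and then that any backtrack $\cdots xyx \cdots$ appearing after concatenation corresponds to a factor $(x,y)(y,x)$, which is a relator of $R_T(G)$ and hence trivial in the quotient; therefore $\chi(p\star q)=\chi(p)\chi(q)$ even though $\star$ involves backtrack removal.

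Finally, I would check the two compositions are the identity. For $\chi\circ \bar\psi$: on a generator $e=(u,v)$, $\chi(\bar\psi(e))$ is the product of edge-labels along $p^a_T(u)\odot(u,v)\odot p^a_T(v)^{-1}$ after the removal of backtracks; all edges of this walk except $(u,v)$ itself lie in $T$, hence lie in $R_T(G)$, hence vanish in the quotient, leaving the class of $(u,v)$. (Care is needed to argue that removing a backtrack from the walk does not change the image under $\chi$, but this is again the $(x,y)(y,x)$ relator.) For $\bar\psi\circ \chi$: on a cycle $p=a\,p_1\cdots p_{n-1}\,a$, the image $\bar\psi(\chi(p))$ is obtained by $\star$-concatenating the walks $\psi((p_i,p_{i+1}))$; successive factors contain $p^a_T(p_{i+1})^{-1}\odot p^a_T(p_{i+1})$ which cancels telescopically under $\varphi$, leaving exactly $p$.

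The main obstacle I foresee is bookkeeping the interaction between the free-group concatenation in $\mathbb{F}_{E_G}$ and the backtrack-removal operation $\varphi$ used in $\star$. The key observation that unlocks this is that the relators $(u,v)(v,u)$ in $R_T(G)$ are precisely what is needed to make $\chi$ insensitive to $\varphi$, while the relators $E_T$ are what collapse the auxiliary tree-walks introduced by $\psi$; once this dictionary between the two kinds of relators and the two operations (tree-lifting and backtrack-removal) is set up cleanly, the verifications become routine telescoping arguments.
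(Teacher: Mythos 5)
Your proposal is correct and takes essentially the same approach as the paper: both define the forward homomorphism by tree-conjugating each edge into a loop at the base point (the paper's $\beta_T$ sets tree edges to $\varepsilon$ directly, which your uniform formula recovers automatically), both use the map that reads off the sequence of edge labels of a cycle as the inverse, and both verify that relators map to trivial cycles and that backtracks correspond to the $(u,v)(v,u)$ relators. Your write-up simply fills in more of the verification details than the paper's sketch.
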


\begin{proof}[Sketch of the proof]
Consider a group homomorphism $\beta_T$ from 
$F_{E_G}$ to $\pi_1(G)$ defined by: 
\begin{equation}\label{equation: defining the map on edges}   
\beta_T((u,v))=\begin{cases}
\varepsilon\text{ if } (u,v) \in E_T\\
p_T(u) \star (u,v) \star (p_T(v))^{-1}\text{ otherwise.}
\end{cases}\end{equation}
The kernel of $\beta_T$ contains $R_T(G)$, because:
\begin{enumerate}
    \item The image by $\beta_T$ of an edge in $T$ is a cycle in $T$ which is equal to the identity in the group $\pi_1(G)$. 
    \item Using Equation \ref{equation: defining the map on edges},  one can see that for all $(u,v)\in E_G$, $\beta_T((u,v) (v,u))$ is the identity in the group $\pi_1(G)$.
\end{enumerate}
The map $\beta_T$ thus yields a map from $\langle E_G : R_T(G)\rangle$ to $\pi_1(G)$, which is an isomorphism. Indeed, it is inverted by the homomorphism from $\pi_1(G)$ to $\langle E_G : R_T(G)\rangle$ which to a cycle $p$ associates the product of the elements $(p_i,p_{i+1})$.
\end{proof}
The following is an immediate consequence of the definition of $R_T(G)$.
\begin{corollary}
    For every graph $G$, $\pi_1(G)$ is a free product of a free group $\mathbb F_k$ and $n$ copies of $\mathbb Z/2\mathbb Z$, where $n$ is the number of self-loops in $G$ and $k =  |E_G| - |V_G| - n+1$.
\end{corollary}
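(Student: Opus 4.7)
The plan is to start from the presentation $\pi_1(G)\cong \langle E_G : R_T(G)\rangle$ given by Proposition~\ref{proposition: Free product} for a fixed spanning tree $T$ of $G$, and simplify it using Lemma~\ref{lemma.mod.presentation}. The set $R_T(G)$ contains two kinds of relators: the length-one relators $e$ for every $e\in E_T$ (encoding $e = 1$), and the length-two relators $ee'$ for every pair of mutually reverse directed edges $e, e'$ (encoding $e' = e^{-1}$ when $e \neq e^{-1}$, and $e^2 = 1$ for every self-loop $e = e^{-1}$).

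I carry out three successive elimination stages. First, for every directed edge $e\in E_T$, the length-one relator $e$ has the form $wew'$ with $w=w'=\varepsilon$, so Lemma~\ref{lemma.mod.presentation}(1) removes $e$ from the generating set and replaces its remaining occurrences by the identity; after iterating over both orientations of every tree edge, all tree-edge generators have disappeared and the reverse-edge relators coming from $E_T$ have become trivial and are discarded via Lemma~\ref{lemma.mod.presentation}(2). Second, for every non-loop edge $e\in E_G\setminus E_T$ with reverse $e^{-1}\neq e$, the relator $e\cdot e^{-1}$ has the form required by Lemma~\ref{lemma.mod.presentation}(1) to eliminate the generator $e^{-1}$, leaving the generator $e$ unconstrained. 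Third, every self-loop $e\in E_G$ survives as a generator subject to the single relation $e^2=1$.

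The resulting presentation has one free generator per non-loop undirected edge of $G$ outside $T$ and one order-two generator per self-loop, with no relations coupling generators of distinct origins, and is therefore exactly a presentation of $\mathbb{F}_k * (\mathbb{Z}/2\mathbb{Z})^{*n}$. The value $k = |E_G|-|V_G|-n+1$ counts the non-loop undirected edges of $G$ outside $T$: one subtracts from $E_G$ the $|V_G|-1$ non-loop edges belonging to the spanning tree and the $n$ self-loops.

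The main thing to verify — the only place where the argument could go wrong — is that each invocation of Lemma~\ref{lemma.mod.presentation}(1) is legitimate, meaning that the generator being eliminated appears in at least one relator in the prescribed linear form $wew'$ at the moment of elimination. Performing the three stages in the above order ensures this: at stage one, each targeted generator is literally a length-one relator; at stage two, the reverse-edge relator for a non-loop, non-tree edge still involves two generators (none of which has yet been eliminated) and has the right shape; at stage three, the self-loop relator has not been touched. A direct inspection of $R_T(G)$ shows no unforeseen interactions between stages, so the reductions go through and yield the claimed free product.
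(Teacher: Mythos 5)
Your proof is correct and takes exactly the approach the paper has in mind: the paper asserts the corollary is ``an immediate consequence of the definition of $R_T(G)$'' without further comment, and your three-stage Tietze reduction of the presentation $\langle E_G : R_T(G)\rangle$ from Proposition~\ref{proposition: Free product}, via Lemma~\ref{lemma.mod.presentation}, is precisely the bookkeeping that makes that remark rigorous. The edge count at the end correctly interprets $|E_G|$ as the number of undirected edges (consistent with Figure~\ref{figure.example.fundamental.group}), and the careful check that no generator elimination is obstructed is exactly the point worth verifying.
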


\begin{remark}\label{remark: Self-loop problem}
    Proposition~\ref{proposition: Free product} may be confusing for a reader familiar with algebraic topology but not with graph theory. A reference such as \cite[Proposition 1.A2]{Hatcher} states that the fundamental group of any graph is a free group, but our definitions differ on self-loops. Consider the graph $G$ with a unique vertex $a$ and a self-loop $(a,a)$. If one thinks of $G$ as a CW-complex representing the topological space $\mathbb R/\Z$ then the corresponding fundamental group is $\Z$. However, the fundamental group of $G$ according to our definition is $\Z/2\Z$.
    This is due to the fact that the clockwise and counterclockwise cycles are distinct on $\mathbb R/\Z$ while the cycle $(a,a)$ is equal to its inverse in $G$. Our results on graph covers (for instance Proposition \ref{ proposition:config.lifting.2}) would fail for the topological definition. 
\end{remark}

\begin{figure}[ht!]
    \centering
        \begin{tikzpicture}[scale=0.5]
        \draw (0,0) rectangle (2,2);
        \draw[fill=gray!40] (0,0) circle (3pt);
        \draw[fill=gray!40] (2,2) circle (3pt);
        \draw[fill=gray!40] (2,0) circle (3pt);
        \draw[fill=gray!40] (0,2) circle (3pt);

        \begin{scope}[yshift=-3cm]
        \draw (1,0) -- (2,1) -- (2,-1) -- (1,0) -- (0,1) -- (0,-1) -- (1,0);
        \draw[fill=gray!40] (1,0) circle (3pt);
        \draw[fill=gray!40] (2,1) circle (3pt);
        \draw[fill=gray!40] (2,-1) circle (3pt);
        \draw[fill=gray!40] (0,-1) circle (3pt);
        \draw[fill=gray!40] (0,1) circle (3pt);
        \end{scope}

        \begin{scope}[yshift=-7cm]
        \draw (-0.25,0) circle (8pt);
        \draw (0,0) -- (2,1) -- (2,-1) -- (0,0);
        \draw[fill=gray!40] (0,0) circle (3pt);
        \draw[fill=gray!40] (2,1) circle (3pt);
        \draw[fill=gray!40] (2,-1) circle (3pt);
        \end{scope}

        \begin{scope}[xshift=5cm]
        \draw (2,0) -- (0,0) -- (0,2) -- (2,2);
        \draw[dashed, ->] (2,0) -- (2,2) node[midway, right] {$g_1$};
        \draw[fill=gray!40] (0,0) circle (3pt);
        \draw[fill=gray!40] (2,2) circle (3pt);
        \draw[fill=gray!40] (2,0) circle (3pt);
        \draw[fill=gray!40] (0,2) circle (3pt);
        \node at (8,1) {$\langle g_1:\rangle = \mathbb{Z}$};
        \end{scope}

        \begin{scope}[xshift=5cm,yshift=-3cm]
        \draw (0,1) -- (2,-1);
        \draw (0,-1) -- (2,1);
        \draw[dashed, ->] (0,1) -- (0,-1) node[midway, left] {$g_1$};
        \draw[dashed, ->] (2,1) -- (2,-1) node[midway, right] {$g_2$};
        \draw[fill=gray!40] (1,0) circle (3pt);
        \draw[fill=gray!40] (2,1) circle (3pt);
        \draw[fill=gray!40] (2,-1) circle (3pt);
        \draw[fill=gray!40] (0,-1) circle (3pt);
        \draw[fill=gray!40] (0,1) circle (3pt);
        \node at (8,0) {$\langle g_1, g_2:\rangle=\mathbb{F}_2$};
        \end{scope}

        \begin{scope}[xshift=5cm,yshift=-7cm]
        \draw[dashed] (-0.25,0) circle (8pt) node[left] {$g_1$};
        \draw (0,0) -- (2,1);
        \draw (2,-1) -- (0,0);
        \draw[dashed, ->] (2,-1) -- (2,1) node[midway, right] {$g_2$};
        \draw[fill=gray!40] (0,0) circle (3pt);
        \draw[fill=gray!40] (2,1) circle (3pt);
        \draw[fill=gray!40] (2,-1) circle (3pt);
        \node at (8,0) {$\langle g_1,g_2 : g_1^2\rangle = \mathbb{Z}/2\mathbb{Z}\raisebox{0.25ex}{$\star$} \mathbb{Z}$};
        \end{scope}
    \end{tikzpicture}
    \caption{Illustration for the definition of fundamental group on three examples. Left column: the graph $G$. Middle column: full lines represent the chosen spanning tree, dotted lines are the generators. Right column: presentation for the fundamental group obtained by Proposition~\ref{proposition: Free product}.
    }
\label{figure.example.fundamental.group}\end{figure}
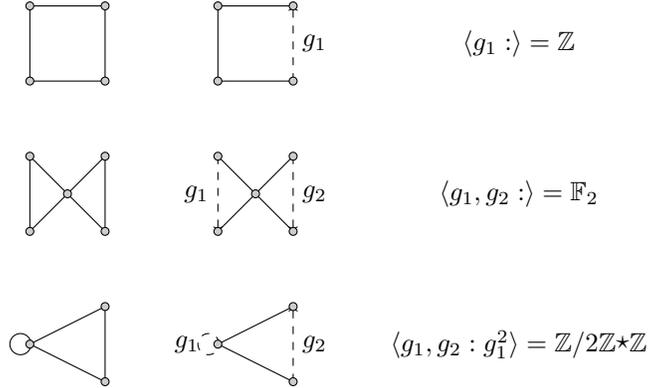
    
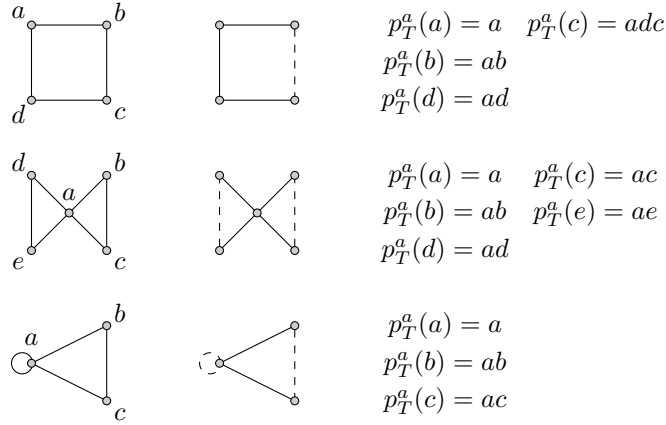
\begin{figure}
    \centering
        \begin{tikzpicture}[scale=0.5]
        \draw (0,0) rectangle (2,2);
        \draw[fill=gray!40] (0,0) circle (3pt);
        \draw[fill=gray!40] (2,2) circle (3pt);
        \draw[fill=gray!40] (2,0) circle (3pt);
        \draw[fill=gray!40] (0,2) circle (3pt);
        \node at (2.35,2.35) {$b$};
        \node at (2.35,-0.35) {$c$};
        \node at (-0.35,-0.35) {$d$};
        \node at (-0.35,2.35) {$a$};
        
        \begin{scope}[yshift=-3cm]
        \draw (1,0) -- (2,1) -- (2,-1) -- (1,0) -- (0,1) -- (0,-1) -- (1,0);
        \draw[fill=gray!40] (1,0) circle (3pt);
        \draw[fill=gray!40] (2,1) circle (3pt);
        \draw[fill=gray!40] (2,-1) circle (3pt);
        \draw[fill=gray!40] (0,-1) circle (3pt);
        \draw[fill=gray!40] (0,1) circle (3pt);
        \node at (2.35,1.35) {$b$};
        \node at (2.35,-1.35) {$c$};
        \node at (-0.35,1.35) {$d$};
        \node at (-0.35,-1.35) {$e$};
        \node at (1,0.5) {$a$};
        \end{scope}

        \begin{scope}[yshift=-7cm]
        \draw (-0.25,0) circle (8pt);
        \draw (0,0) -- (2,1) -- (2,-1) -- (0,0);
        \draw[fill=gray!40] (0,0) circle (3pt);
        \draw[fill=gray!40] (2,1) circle (3pt);
        \draw[fill=gray!40] (2,-1) circle (3pt);
        \node at (2.35,1.35) {$b$};
        \node at (2.35,-1.35) {$c$};
        \node at (0,0.625) {$a$};
        \end{scope}

        \begin{scope}[xshift=5cm]
         \draw (2,0) -- (0,0) -- (0,2) -- (2,2);
        \draw[dashed] (2,0) -- (2,2);
        \draw[fill=gray!40] (0,0) circle (3pt);
        \draw[fill=gray!40] (2,2) circle (3pt);
        \draw[fill=gray!40] (2,0) circle (3pt);
        \draw[fill=gray!40] (0,2) circle (3pt);
        \node at (6,1) {$p_T^a(b) = ab$};
        \node at (6,2) {$p_T^a(a) = a$};
        \node at (6,0) {$p_T^a(d) = ad$};
        \node at (10,2) {$p_T^a(c) = adc$};
        \end{scope}

        \begin{scope}[xshift=5cm,yshift=-3cm]
        \draw (0,1) -- (2,-1);
        \draw (0,-1) -- (2,1);
        \draw[dashed] (0,1) -- (0,-1);
        \draw[dashed] (2,1) -- (2,-1);
        \draw[fill=gray!40] (1,0) circle (3pt);
        \draw[fill=gray!40] (2,1) circle (3pt);
        \draw[fill=gray!40] (2,-1) circle (3pt);
        \draw[fill=gray!40] (0,-1) circle (3pt);
        \draw[fill=gray!40] (0,1) circle (3pt);
        \node at (6,0) {$p_T^a(b) = ab$};
        \node at (6,1) {$p_T^a(a) = a$};
        \node at (6,-1) {$p_T^a(d) = ad$};
        \node at (10,1) {$p_T^a(c) = ac$};
        \node at (10,0) {$p_T^a(e) = ae$};
        \end{scope}

        \begin{scope}[xshift=5cm,yshift=-7cm]
        \draw[dashed] (-0.25,0) circle (8pt);
        \draw (0,0) -- (2,1);
        \draw (2,-1) -- (0,0);
        \draw[dashed] (2,-1) -- (2,1);
        \draw[fill=gray!40] (0,0) circle (3pt);
        \draw[fill=gray!40] (2,1) circle (3pt);
        \draw[fill=gray!40] (2,-1) circle (3pt);
        \node at (6,0) {$p_T^a(b) = ab$};
        \node at (6,1) {$p_T^a(a) = a$};
        \node at (6,-1) {$p_T^a(c) = ac$};
        \end{scope}
    \end{tikzpicture}
    \caption{Illustration of the function $v \mapsto p^a_T(v)$. The spanning tree $T$ has been indicated in the middle column.}
    \label{fig.example.p.func}
\end{figure}

\subsection{Graph covers and deck transformations\label{sec:covers}}

\begin{definition}
A \textbf{covering map} from a graph $\tilde G$ to $G$ is a graph homomorphism $\theta: \tilde G\to G$ which is a `local isomorphism', meaning that for all $a \in G$,
\[\theta^{-1}(N_G(a))=\bigsqcup_{\tilde a\in \theta^{-1}(a)}N_{\tilde G}(\tilde a),\] 
and the map $\theta|_{N_{\tilde G}(\tilde a)}$ is bijection onto $N_G(\theta(\tilde a))$ for all $\tilde a\in V_{\tilde G}$. A \textbf{cover} of a graph $G$ is a graph $\tilde G$ such that there exists a covering map from $\tilde G$ to $G$. One can find an illustration for this definition in Figure~\ref{figure.covers}. 
\label{def.cover}
\end{definition}

\begin{figure}[ht!]
    \centering
        \begin{tikzpicture}[scale=0.5]
        \begin{scope}
            \draw (0,0) rectangle (2,2);
            \foreach \x in {0,2} {
                \foreach \y in {0,2} {
                \draw[fill=gray!40] (\x,\y) circle (3pt);
                }
            }
            \node[scale=0.9] at (-0.35,-0.35) {b};
            \node[scale=0.9] at (2.35,-0.35) {c};
            \node[scale=0.9] at (2.35,2.35) {d};
        \node[scale=0.9] at (-0.35,2.35) {a};
        \end{scope}

        \begin{scope}[xshift=7.5cm]
            \draw (0,-0.5) rectangle (3,2.5);
            \foreach \y in {0,1,2,3} {
            \draw[fill=gray!40] (3,-0.5+\y) circle (3pt);
            \draw[fill=gray!40] (0,-0.5+\y) circle (3pt);
            }
            \foreach \x in {1,2} {
            \draw[fill=gray!40] (\x,2.5) circle (3pt);
            \draw[fill=gray!40] (\x,-0.5) circle (3pt);
            }
            \node[scale=0.9] at (-0.5,2.85) {a};
            \node[scale=0.9] at (1,3.1) {b};
            \node[scale=0.9] at (2,3) {c};
            \node[scale=0.9] at (3.35,2.85) {d};
            \node[scale=0.9] at (3.5,1.5) {a};
            \node[scale=0.9] at (3.5,0.5) {b};
            \node[scale=0.9] at (3.35,-0.85) {c};
            \node[scale=0.9] at (2,-1) {d};
            \node[scale=0.9] at (1,-1) {a};
            \node[scale=0.9] at (-0.35,-0.85) {b};
            \node[scale=0.9] at (-0.35,-0.85) {b};
            \node[scale=0.9] at (-0.5,0.5) {c};
            \node[scale=0.9] at (-0.5,1.5) {d};
        \end{scope}

        \begin{scope}[yshift=-5cm]
        \draw (-0.25,0) circle (8pt);
        \draw (0,0) -- (2,1) -- (2,-1) -- (0,0);
        \draw[fill=gray!40] (0,0) circle (3pt);
        \draw[fill=gray!40] (2,1) circle (3pt);
        \draw[fill=gray!40] (2,-1) circle (3pt);
        \node at (2.35,1.35) {$b$};
        \node at (2.35,-1.35) {$c$};
        \node at (0,0.625) {$a$};
        \end{scope}

        \begin{scope}[yshift=-5cm,xshift=9.5cm]
        \draw (0,0) -- (2,1) -- (2,-1) -- (0,0);
        \draw (0,0) -- (-1,0);
        \draw (-1,0) -- (-3,1) -- (-3,-1) -- (-1,0);
        \draw[fill=gray!40] (0,0) circle (3pt);
        \draw[fill=gray!40] (2,1) circle (3pt);
        \draw[fill=gray!40] (2,-1) circle (3pt);
        \draw[fill=gray!40] (-1,0) circle (3pt);
        \draw[fill=gray!40] (-3,1) circle (3pt);
        \draw[fill=gray!40] (-3,-1) circle (3pt);
        \node at (2.35,1.35) {$b$};
        \node at (2.35,-1.35) {$c$};
        \node at (0,0.625) {$a$};
        \node at (-3.35,1.35) {$b$};
        \node at (-3.35,-1.35) {$c$};
        \node at (-1,0.625) {$a$};
        \end{scope}
    \end{tikzpicture}
    \caption{Illustration for the definition of cover. For each graph $G$ in the left column is represented a cover of $\tilde{G}$ in the right column, where the labels are the images of vertices by the covering map.
    }
    \label{figure.covers}
\end{figure}
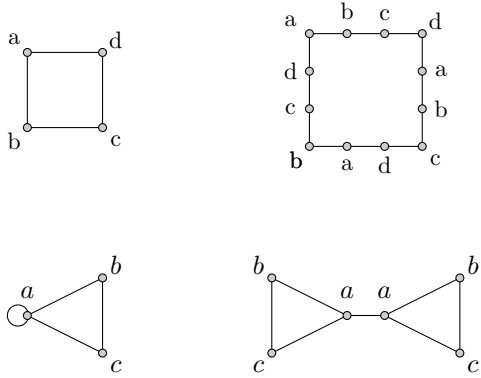

A walk on the graph $G$ can be lifted to a walk on any cover $\tilde G$ of $G$, a property that we use extensively in the remainder of the text.

\begin{proposition}[Walk-lifting property]
\label{prop: path lifting}
Let $\theta: {\tilde G}\to G$ be a covering map. Given a walk $p$ on $G$ and $\tilde p_0\in \theta^{-1}(p_0)$, there is a unique walk $\tilde p$ on $\tilde G$ starting at $\tilde p_0$ such that $l(\tilde p) = l(p)$ and
$\theta(\tilde p_i)=p_i$ for all $0\leq i\leq l(p)$.
\end{proposition}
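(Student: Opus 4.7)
The plan is a straightforward induction on the length $n = l(p)$, driven entirely by the local isomorphism property in Definition~\ref{def.cover}.

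For the base case $n = 0$, the walk $p$ consists of the single vertex $p_0$, and we set $\tilde p := \tilde p_0$. Existence and uniqueness of the lift are both trivial, since any such lift must be a single vertex mapping to $p_0$ and starting at $\tilde p_0$.

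For the inductive step, suppose we have already produced a unique walk $\tilde p_0 \tilde p_1 \cdots \tilde p_n$ in $\tilde G$ with $\theta(\tilde p_i) = p_i$ for all $i \le n$, and we wish to extend the lift to $p_{n+1}$. Because $p$ is a walk on $G$, we have $(p_n, p_{n+1}) \in E_G$, i.e.\ $p_{n+1} \in N_G(p_n) = N_G(\theta(\tilde p_n))$. By the hypothesis that $\theta$ is a covering map, the restriction $\theta|_{N_{\tilde G}(\tilde p_n)}$ is a bijection onto $N_G(\theta(\tilde p_n))$, so there exists a unique $\tilde p_{n+1} \in N_{\tilde G}(\tilde p_n)$ with $\theta(\tilde p_{n+1}) = p_{n+1}$. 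Appending this vertex produces a walk in $\tilde G$ of length $n+1$ lifting $p_0 \cdots p_{n+1}$, and the uniqueness of $\tilde p_{n+1}$ among the neighbours of $\tilde p_n$ upgrades to uniqueness of the entire lift when combined with the inductive hypothesis.

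There is no real obstacle here: the statement is essentially a translation of the combinatorial definition of a covering map into a statement about walks, and the only ingredient used is the local bijection between $N_{\tilde G}(\tilde a)$ and $N_G(\theta(\tilde a))$. The one detail worth explicitly spelling out is that uniqueness at the level of each single-step extension automatically yields uniqueness of the whole lifted walk, since any two lifts would have to differ at some first coordinate, contradicting the bijectivity of $\theta$ on the relevant neighbourhood.
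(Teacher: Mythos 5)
Your proof is correct and uses essentially the same approach as the paper: a step-by-step extension of the lift via the local bijection $\theta|_{N_{\tilde G}(\tilde p_n)} \colon N_{\tilde G}(\tilde p_n) \to N_G(\theta(\tilde p_n))$, with uniqueness at each step giving uniqueness of the whole lift. You merely spell out the induction more formally than the paper, which states the one-step argument and then says ``the statement follows from iterating this reasoning.''
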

We call $\tilde p$ a \textbf{lift} of $p$ starting at $\tilde p_0$.

\begin{proof}
By definition of covering map, $\theta|_{N_{{\tilde G}(\tilde p_0)}}$ is a graph isomorphism onto $N_{G}(p_0)$. Thus there is a unique vertex $\tilde p_1 \in N_{\tilde G} (\tilde p_0)$ such that $\theta(\tilde p_1)=p_1$. The statement follows from iterating this reasoning. 
\end{proof} 

\begin{remark}
     A lift of a cycle is not necessarily a cycle: consider the graph $G$ on the first row of Figure \ref{figure.covers}. No cycle in $\tilde G$ is a lift of the cycle $p=abcda$.
\end{remark}

Graph covers are related to the fundamental group via deck transformations, which are actions of the fundamental group on graph covers. 

\begin{definition}
    The \textbf{deck transformations} of a covering map $\theta: \tilde G \to G$ are the graph automorphisms $\eta$ of $\tilde G$ which are equivariant with respect to the projection map $\theta$, that is, $\theta\circ \eta=\theta$. 
\end{definition}

A deck transformation is essentially determined by the image of a vertex: 

\begin{proposition}\label{prop: uniqueness of deck transformation}
    Let $\theta: \tilde G \to G$ be a covering map and $\tilde a, \tilde a'\in \tilde G$ be such that $\theta(\tilde a)=\theta(\tilde a')$. There exists at most one deck transformation $\eta$ such that $\eta(\tilde a)=\tilde a'$.
\end{proposition}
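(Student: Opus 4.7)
The plan is to reduce the claim to the uniqueness part of the walk-lifting property (Proposition~\ref{prop: path lifting}), exploiting the fact that a deck transformation is determined, on a connected cover, by where it sends a single vertex, because the images of walks emanating from that vertex are constrained by equivariance with $\theta$.

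More precisely, suppose $\eta_1$ and $\eta_2$ are two deck transformations with $\eta_1(\tilde a) = \eta_2(\tilde a) = \tilde a'$. Let me first observe that I can assume (as is standard for a cover of a connected graph) that $\tilde G$ is connected; otherwise the statement should be interpreted componentwise. Pick any $\tilde b \in \tilde G$ and a walk $\tilde p$ on $\tilde G$ from $\tilde a$ to $\tilde b$. Applying $\eta_i$ vertex-by-vertex yields walks $\eta_i(\tilde p)$ on $\tilde G$, both starting at $\tilde a'$. By the equivariance condition $\theta \circ \eta_i = \theta$, both of these walks project by $\theta$ to the same walk $\theta(\tilde p)$ on $G$.

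The uniqueness part of Proposition~\ref{prop: path lifting} then says that there is a unique lift of the walk $\theta(\tilde p)$ starting at $\tilde a'$, so $\eta_1(\tilde p) = \eta_2(\tilde p)$ pointwise. In particular $\eta_1(\tilde b) = \eta_2(\tilde b)$. Since $\tilde b$ was arbitrary, $\eta_1 = \eta_2$.

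There is no serious obstacle in this argument; it is a direct application of walk-lifting once one notices that equivariance forces $\eta_i(\tilde p)$ to be a lift of $\theta(\tilde p)$. The only subtle point worth flagging is the implicit connectedness hypothesis on $\tilde G$, which is why fixing the value at a single vertex is enough to rigidify the transformation globally.
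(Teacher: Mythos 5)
Your argument is correct and is essentially identical to the paper's own proof: both apply $\eta_1$ and $\eta_2$ vertex-by-vertex to a walk from $\tilde a$, note that equivariance makes both images lifts of $\theta(\tilde p)$ starting at $\tilde a'$, and invoke the uniqueness of walk-lifting (Proposition~\ref{prop: path lifting}) together with connectedness of $\tilde G$ to conclude. No meaningful difference in approach.
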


\begin{proof}
Let us assume that there exists two such deck transformations $\eta$ and  $\eta'$. We prove that $\eta = \eta'$.
For all walks $\tilde p$ on $\tilde G$, denote by $\eta(\tilde p)$ (resp. $\eta'(\tilde p)$, $\theta(\tilde p)$) the walk on $\tilde G$ or $G$ obtained by applying $\eta$ (resp. $\eta'$, $\theta$) vertex by vertex. 
Let $\tilde p$ be any walk which starts at $\tilde a$. By the definition of deck transformation, $\eta(\tilde p)$ and $\eta'(\tilde p)$ are both lifts of $\theta(\tilde p)$ which start at ${\tilde a}'$. By Proposition \ref{prop: path lifting}, this lift is unique, so $\eta(\tilde p) = \eta'(\tilde p)$. Since $\tilde G$ is connected and this holds for any walk which starts at $\tilde a$, this implies that $\eta = \eta'$.
\end{proof}

\begin{remark}
    In Proposition \ref{prop: uniqueness of deck transformation}, the deck transformation $\eta$ does not have to exist. For instance, consider Figure \ref{fig:cover-discover} and the covering map $\theta : \tilde G \to G$ defined by dropping indices, that is, $\theta(l_i) = l$ for all $i$ and all $l \in V_G$. There is no deck transformation for $\theta$ such that $\eta(a_2) = a_1$. Indeed, such a deck transformation $\eta$ would satisfy $\eta(b_1) = \eta(b_2) = b_3$, so it is not an automorphism of $\tilde G$.
\end{remark}

\subsection{Universal graph cover\label{sec:universal.covers}}

\begin{notation}
    For every $a \in V_G$, we denote by $\mathcal{U}_G[a]$ the graph whose vertices are the non-backtracking walks on $G$ beginning at $a$ and edges are the pairs of walks $(p,q)$ such that either $p = qv$ or $q = pv$ for some $v \in V_G$, that is, one is extension of the other by a single step. 
\end{notation}

\begin{definition} Like for the fundamental group, all graphs $\mathcal{U}_G[a]$, $a\in G$,  are isomorphic. For example, for $a,b \in G$, $q$ a non-backtracking walk from $a$ to $b$ and $p$ non-backtracking walk starting at $a$, the map $p \mapsto q \star p$ is an isomorphism from $\mathcal{U}_G[a]$ to $\mathcal{U}_G[b]$. The isomorphism class of $\mathcal{U}_G[a]$, $a\in V_G$ is called the \textbf{universal cover} of $G$ and denoted $\mathcal{U}_G$. This definition is illustrated in Figure~\ref{figure.example.universal.cover}. We identify it with any of its elements and drop the base vertex from the notations in the following.
\end{definition} 

\begin{figure}[ht!]
    \centering
    \input{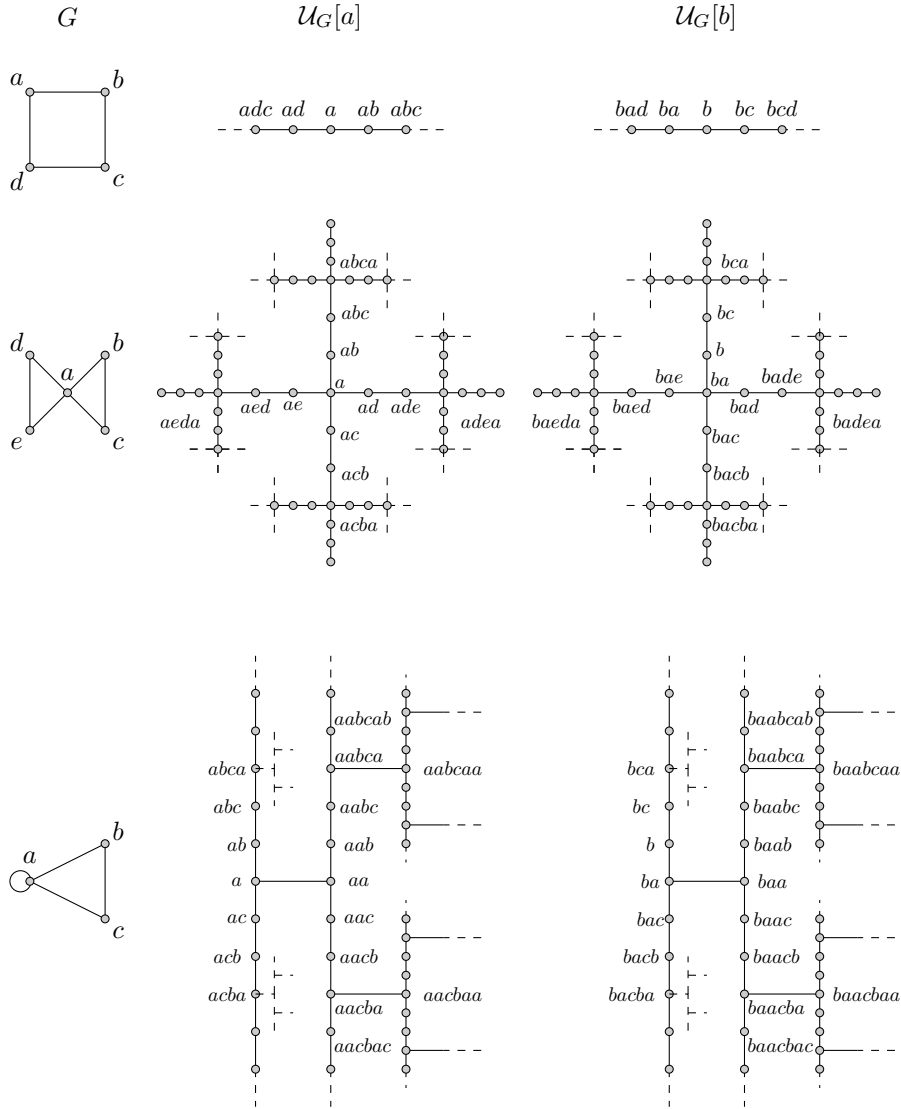}
    \caption{Illustration for the definition of universal cover. Left column: the three graphs from Figure~\ref{figure.example.fundamental.group}. Middle and right columns: two representations of the square cover for different base points, that are indeed isomorphic graphs.}\label{figure.example.universal.cover}
\end{figure}

One can check that the universal cover does not have any cycle.

\begin{notation}
    For all $a \in V_G$, let \notationidx{$\alpha_a$}{morphism that to a walk associate its last vertex.} : $\mathcal U_G[a]\to G$ be the graph homomorphism such that $\alpha_a(p)$ is the terminal vertex of the walk $p$.
\end{notation} 

\begin{proposition}[Universal covers are graph covers]
The map $\alpha$ is a covering map, which makes $\mathcal U_G$ a cover of $G$.
\end{proposition}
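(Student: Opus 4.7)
The plan is to verify the two defining conditions of a covering map: the ``local bijection'' condition for each single neighborhood, and the disjointness of the decomposition of $\alpha^{-1}(N_G(v))$. Both reduce to a concrete description of the edges of $\mathcal{U}_G[a]$ in terms of extension/truncation of non-backtracking walks.

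First, I would check that $\alpha_a$ is a graph homomorphism. By definition, an edge in $\mathcal{U}_G[a]$ is a pair of non-backtracking walks $(p,q)$ with $q=pw$ or $p=qw$ for some vertex $w$. In the first case $\alpha_a(p)=p_{l(p)}$ and $\alpha_a(q)=w$, and $(p_{l(p)},w)\in E_G$ because $pw$ is a walk on $G$; the second case is symmetric.

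Next, I would fix $v\in V_G$ and a walk $p\in \alpha_a^{-1}(v)$ and describe $N_{\mathcal U_G[a]}(p)$ explicitly. Its elements are of two kinds: the \emph{truncation} $p_0\cdots p_{l(p)-1}$ (when $l(p)\ge 1$) and the \emph{extensions} $p\cdot w$ ranging over those $w\in N_G(v)$ for which $pw$ remains non-backtracking, i.e.\ $w\neq p_{l(p)-1}$ when $l(p)\ge 1$. The truncation is sent by $\alpha_a$ to $p_{l(p)-1}\in N_G(v)$, and the extensions are sent to the remaining neighbors $N_G(v)\setminus\{p_{l(p)-1}\}$; thus $\alpha_a$ restricts to a bijection from $N_{\mathcal U_G[a]}(p)$ onto $N_G(v)$. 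When $l(p)=0$ there is no truncation, but $p$ is necessarily the trivial walk $a$ with $v=a$, and the extensions cover all of $N_G(a)$. The only subtle point is that self-loops at $v$ are correctly accounted for, which is exactly why we must forbid the extension $p\cdot p_{l(p)-1}$ even if the corresponding edge of $G$ is a self-loop, and the verification is immediate from the case analysis.

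Finally I would establish disjointness: if $p\neq q$ both lie in $\alpha_a^{-1}(v)$, then $N_{\mathcal U_G[a]}(p)\cap N_{\mathcal U_G[a]}(q)=\emptyset$. Suppose $r$ belongs to both neighborhoods. The four cases according to whether $r$ is an extension or a truncation of each of $p,q$ each lead to a contradiction by short calculations: the two ``same-type'' cases ($r=pw_1=qw_2$ or $p=rw_1,q=rw_2$) force $p=q$ directly from $\alpha_a(p)=\alpha_a(q)=v$; the two ``mixed-type'' cases force $q=pw_1w_2$ (or symmetrically) with $w_2=v=p_{l(p)}$, producing the backtrack $v\, w_1\, v$ at the end of $q$, contradicting the non-backtracking assumption on vertices of $\mathcal U_G[a]$. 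The main obstacle is simply keeping track of the self-loop corner cases cleanly, but the argument is routine once the neighborhood structure of $\mathcal U_G[a]$ is written down.
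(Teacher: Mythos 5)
Your proof is correct and follows essentially the same route as the paper's: describe $N_{\mathcal U_G[a]}(p)$ explicitly as one truncation plus extensions, deduce the local bijection, and rule out a shared neighbor of distinct fibers by deriving a backtrack. The only cosmetic difference is that the paper packages both truncations and extensions uniformly as $\varphi(pb_i)$ while you separate the cases explicitly.
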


\begin{proof}
Fix an arbitrary base vertex $a$. For all $b \in V_G$, $\alpha_a^{-1}(b)$ is the set of non-backtracking walks on $G$ from $a$ to $b$. 

\textbf{1.} We have to prove that for two such walks $p\neq q$, $N_{\mathcal U_G[a]}{(p)}$ and $N_{\mathcal U_G[a]}{(q)}$ are disjoint. Let us denote by $b_1, b_2, \ldots b_r$ the elements of $N_G(b)$. Then we have that
\[N_{\mathcal U_G[a]}{(p)}=\{\varphi(pb_i)~:~1\leq i \leq r\}.\] 
$p$ has no backtracks so $\varphi(pb_i)$ starts with $p_0 \ldots p_{l(p)-1}$. Since $p_0 \ldots p_{l(p)-1} \neq q_0 \ldots q_{l(q)-1}$ ($p$ and $q$ end at $b$), the only possibility for $\varphi(pb_i) = \varphi(qb_j)$ would be that $\varphi(pb_i) = p_0 \ldots p_{l(p)-1}$ and $\varphi(qb_j) = q b_j$, so $p = qb_j b$ (or the symmetric case). This would imply that $p$ is backtracking since $q$ ends at $b$. We have proved that $N_{\mathcal U_G[a]}{(p)} \cap N_{\mathcal U_G[a]}{(q)} = \emptyset$. \bigskip 
   
\textbf{2.} We are left to prove that for every walk $p \in \mathcal U_G[a]$, $\alpha_a$ is a bijection from $N_{\mathcal U_G[a]}{(p)}$ to $N_G(p_{l(p)})$. This comes directly from the fact that the walks $\varphi(p b_i)$, $1\leq i \leq r$, where $b_i$ are the elements of $N_G(p_{l(p)})$, are all distinct.\bigskip 

These two facts imply, by the definition, that $\alpha$ is a covering map.\end{proof}

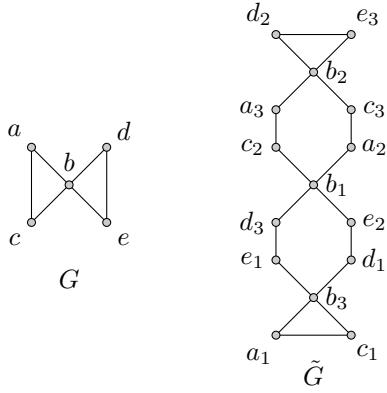
\begin{figure}
    \centering
       \begin{tikzpicture}[scale=0.5]
       \draw (0,0) -- (2,2) -- (2,0) -- (0,2) -- (0,0);
       \draw[fill=gray!40] (0,0) circle (3pt);
       \draw[fill=gray!40] (0,2) circle (3pt);
       \draw[fill=gray!40] (2,0) circle (3pt);
       \draw[fill=gray!40] (2,2) circle (3pt);
       \draw[fill=gray!40] (1,1) circle (3pt);
       \node at (1,-1.5) {$G$};
       \node at (1,1.625) {$b$};
       \node at (2.45,2.45) {$d$};
       \node at (2.45,-0.45) {$e$};
       \node at (-0.45,-0.45) {$c$};
       \node at (-0.45,2.45) {$a$};

       \begin{scope}[xshift=7.5cm,yshift=1cm]
           \draw (0,0) -- (1,1) -- (1,2) -- (0,3) -- (1,4) -- (-1,4) -- (0,3) -- (-1,2) -- (-1,1) -- (0,0) -- (1,-1) -- (1,-2) -- (0,-3) -- (1,-4) -- (-1,-4) -- (0,-3) -- (-1,-2) -- (-1,-1) -- (0,0);
           \node at (0,-5) {$\tilde{G}$};
           \node at (1.45,4.45) {$e_3$};
           \node at (-1.45,4.55) {$d_2$};
           \node at (-1.625,2) {$a_3$};
           \node at (-1.625,1) {$c_2$};
           \node at (1.625,2) {$c_3$};
           \node at (1.625,1) {$a_2$};
           \node at (0.625,3) {$b_2$};
           \node at (0.625,0) {$b_1$};
           \node at (0.625,-3) {$b_3$};

           \node at (1.45,-4.45) {$c_1$};
           \node at (-1.45,-4.55) {$a_1$};
           \node at (-1.625,-2) {$e_1$};
           \node at (-1.625,-1) {$d_3$};
           \node at (1.625,-2) {$d_1$};
           \node at (1.625,-1) {$e_2$};
           
           \draw[fill=gray!40] (0,0) circle (3pt);
           \draw[fill=gray!40] (1,1) circle (3pt);
           \draw[fill=gray!40] (1,2) circle (3pt);
           \draw[fill=gray!40] (0,3) circle (3pt);
           \draw[fill=gray!40] (1,4) circle (3pt);
           \draw[fill=gray!40] (-1,4) circle (3pt);
           \draw[fill=gray!40] (-1,2) circle (3pt);
           \draw[fill=gray!40] (-1,1) circle (3pt);

           \draw[fill=gray!40] (1,-1) circle (3pt);
           \draw[fill=gray!40] (1,-2) circle (3pt);
           \draw[fill=gray!40] (0,-3) circle (3pt);
           \draw[fill=gray!40] (1,-4) circle (3pt);
           \draw[fill=gray!40] (-1,-4) circle (3pt);
           \draw[fill=gray!40] (-1,-2) circle (3pt);
           \draw[fill=gray!40] (-1,-1) circle (3pt);
       \end{scope}
   \end{tikzpicture}
    \caption{An example of cover $\tilde{G}$ of a graph $G$.}
    \label{fig:cover-discover}
\end{figure}

\paragraph{Some deck transformations of the universal cover\label{section:action.fundamental.group}}

To each element of the fundamental group $\pi_1(G)$ we associate a deck transformation of the universal cover $\mathcal U_G$ in the following way.

\begin{notation}
    For all base vertices $a\in G$ and all $g \in \pi_1(G)[a]$, let $\eta^a_g: {\mathcal U_G[a]}\to {\mathcal U_G[a]}$ be the homomorphism such that for any $w$, $\eta^a_g(w)={g}\star w$. 
\end{notation}

\begin{lemma}
    For all $g \in \pi_1(G)$, $\eta_g$ is a deck transformation of $\alpha$. This defines an action of $\pi_1(G)$ on the universal cover $\mathcal U_G$ by deck transformations.\label{lemma: deck transformation}
\end{lemma}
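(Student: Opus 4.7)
The approach is to establish, for each $g \in \pi_1(G)[a]$ (with $a$ an arbitrary base vertex), three properties of $\eta^a_g$: it is a graph homomorphism of $\mathcal{U}_G[a]$, it is a bijection, and it intertwines with $\alpha_a$; then to observe that $g \mapsto \eta^a_g$ is a group homomorphism from $(\pi_1(G)[a], \star)$ to the automorphism group of $\mathcal{U}_G[a]$, which packages everything into an action by deck transformations.

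The intertwining relation $\alpha_a \circ \eta^a_g = \alpha_a$ is immediate: since $g$ is a cycle based at $a$ and $w$ starts at $a$, the walk $g \odot w$ ends at $w_{l(w)}$, and replacing a backtrack $xyx$ by $x$ preserves both endpoints of a walk, so $g \star w = \varphi(g \odot w)$ also ends at $w_{l(w)}$. The homomorphism property reduces to the associativity of $\star$ noted after Notation~\ref{notation:odot}: for all $g, h \in \pi_1(G)[a]$ and any $w$,
\[\eta^a_g(\eta^a_h(w)) = g \star (h \star w) = (g \star h) \star w = \eta^a_{g \star h}(w),\]
and $\eta^a_\varepsilon$ is the identity because $\varepsilon \star w = w$. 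In particular $\eta^a_{g^{-1}}$ is a two-sided inverse of $\eta^a_g$, which settles bijectivity.

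The substantive step is that $\eta^a_g$ preserves edges. An edge of $\mathcal{U}_G[a]$ joins $p$ to $q = pv$, where $v$ is a neighbor of $p_{l(p)}$ different from $p_{l(p)-1}$. Because $\varphi$ is independent of the order in which backtracks are removed, I can write $g \star (pv) = \varphi((g \star p) \cdot v)$. Let $g \star p = w_0 w_1 \ldots w_m$. Appending $v$ to this non-backtracking walk can create at most one new backtrack, namely the terminal pattern $w_{m-1} w_m v$. If $m \ge 1$ and $w_{m-1} = v$, removing this backtrack produces the still non-backtracking walk $w_0 \ldots w_{m-1}$, so $g \star (pv)$ equals $g \star p$ with its last vertex deleted; otherwise no new backtrack arises and $g \star (pv) = (g \star p) \cdot v$ is a one-step extension of $g \star p$. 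In either case $\eta^a_g(p)$ and $\eta^a_g(q)$ differ by a one-step extension, hence form an edge of $\mathcal{U}_G[a]$.

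The main obstacle lies precisely in this case analysis, which rests on the observation that appending a single vertex to a non-backtracking walk can create at most one new backtrack, and only at the terminal position. Once this is in hand, combining the three properties shows that each $\eta^a_g$ is a deck transformation of $\alpha_a$, and the homomorphism property then exhibits $g \mapsto \eta^a_g$ as the desired action of $\pi_1(G)$ on $\mathcal{U}_G$.
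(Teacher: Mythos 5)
Your proof is correct and follows the same strategy as the paper's sketch (associativity of $\star$ for the action, preservation of terminal vertices for equivariance with $\alpha$), but it carefully fills in the step the paper elides: that $\eta^a_g$ is actually a graph homomorphism. Your case analysis — appending $v$ to the non-backtracking walk $g\star p$ creates at most one new terminal backtrack, so $\eta^a_g(pv)$ is either a one-step extension or a one-step truncation of $\eta^a_g(p)$ — is exactly the missing verification, and together with the two-sided inverse $\eta^a_{g^{-1}}$ it completes the check that each $\eta^a_g$ is a deck transformation.
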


\begin{proof}[Proof sketch]

Since for all $w$, $\eta_g(w)$ and $w$ have the same terminal vertex, $\eta_g$ is a deck transformation. Since the map $\star$ is associative, it follows that this defines a group action. Indeed we have that
\[\eta_{gh}(w)=(gh)\star w= (g\star h)\star w= g\star (h\star w)= \eta_g(\eta_h(w)).\qedhere\]
\end{proof}

Recall from Section~\ref{sec:fundamental_group} that $p^a_T(b)$ is the unique walk from $a$ to $b$ in the spanning tree $T$.

\begin{notation}
For any spanning tree $T$ of $G$ and $a \in G$, set:
\[\psi^a_T: 
\begin{array}{ccc}\pi_1(G) \times G &\to& {\mathcal{U}_G}\\
g,b &\mapsto& \eta_g\circ p^a_T(b).\end{array}\]
\end{notation}

\begin{lemma}\label{lem:inv.psi}
For any spanning tree $T$ of $G$ and $a\in G$, the map $\psi^a_T$ is a bijection. Furthermore, for all $g$, $\psi^a_T(g,.)$ is a graph homomorphism as a map from $T$ to $\mathcal{U}_G$.
\end{lemma}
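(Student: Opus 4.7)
The plan is to construct an explicit inverse to $\psi^a_T$ and then to verify the graph homomorphism property directly from the adjacency relation in $\mathcal{U}_G$.

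For bijectivity, I would first observe that every walk of the form $g \star p^a_T(b)$ ends at $b$, because $\star$ preserves endpoints and $g$ is a cycle at $a$. This already forces any candidate inverse to send $w \in \mathcal{U}_G$ to a pair whose second coordinate is $\alpha(w)$. I would then define the candidate inverse
\[
\Phi : w \longmapsto \bigl(\, w \star (p^a_T(\alpha(w)))^{-1},\ \alpha(w)\,\bigr).
\]
The first coordinate is a non-backtracking cycle at $a$, since it starts where $w$ starts ($=a$) and ends where $(p^a_T(\alpha(w)))^{-1}$ ends ($=a$), so it lies in $\pi_1(G)[a]$. Using associativity of $\star$ (from the Notation paragraph) and the identity $(p^a_T(b))^{-1} \star p^a_T(b) =$ trivial walk at $b$, one checks that $\psi^a_T \circ \Phi = \mathrm{id}$; injectivity of $\psi^a_T$ is the symmetric computation, obtained by right-cancelling $p^a_T(b)$ in $\psi^a_T(g,b) = \psi^a_T(g',b)$.

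For the homomorphism property, fix $g \in \pi_1(G)$ and an edge $(b_1,b_2)$ of $T$. Since $T$ is a tree, the two walks $p^a_T(b_1)$ and $p^a_T(b_2)$ differ by exactly one edge, so without loss of generality $p^a_T(b_2) = p^a_T(b_1) \odot b_2$. Setting $w := \psi^a_T(g,b_1) = \varphi(g \odot p^a_T(b_1))$, a non-backtracking walk ending at $b_1$, we have $\psi^a_T(g,b_2) = \varphi(w \odot b_2)$. Because $w$ is already non-backtracking, appending the single vertex $b_2$ either creates no new backtrack, in which case $\psi^a_T(g,b_2) = w \odot b_2$ is a one-step extension of $\psi^a_T(g,b_1)$, or creates exactly one backtrack at the very end, which forces $b_2$ to be the second-to-last vertex of $w$ and yields $\psi^a_T(g,b_2) = w$ with its last vertex removed, so $\psi^a_T(g,b_1)$ is a one-step extension of $\psi^a_T(g,b_2)$. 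In either case the two walks are adjacent in $\mathcal{U}_G$ by definition.

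The main technical subtlety I anticipate is the dichotomy in the last step: one must argue carefully that appending a single vertex to a non-backtracking walk can create at most one backtrack and that such a backtrack, if it appears, must be located at the terminal end. Once this local reduction of $\varphi$ is in hand, both the bijectivity and the homomorphism property follow cleanly from the associativity of $\star$ and the cancellation identity for tree paths.
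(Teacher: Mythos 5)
Your proof is correct. For bijectivity you construct exactly the same inverse map $w \mapsto (w \star (p^a_T(\alpha(w)))^{-1}, \alpha(w))$ as the paper and verify it by associativity and cancellation of $p^a_T(b)$, which matches the paper's argument. For the homomorphism property you diverge: the paper's (terse) argument is that $b \mapsto p^a_T(b)$ sends $T$-adjacent vertices to $\mathcal{U}_G$-adjacent walks and then relies on the preceding lemma showing that $\eta_g$ is a deck transformation, so that $\psi^a_T(g,\cdot) = \eta_g \circ p^a_T(\cdot)$ is a composition of graph homomorphisms. You instead verify adjacency directly: you reduce $\psi^a_T(g,b_2)$ to $\varphi(w \odot b_2)$ where $w = \psi^a_T(g,b_1)$ is non-backtracking, and then argue by cases that appending a single vertex to a non-backtracking walk either gives a one-step extension or (if the new vertex equals $w_{l(w)-1}$) creates one terminal backtrack whose removal gives the length-$(l(w)-1)$ prefix of $w$. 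This is more self-contained and avoids citing the deck-transformation lemma, at the cost of an implicit appeal to confluence of $\varphi$ (which the paper does assert in the Notation section, so this is fine). Both routes are valid; the paper's is shorter by leaning on an already-established structural fact, while yours is a more elementary hands-on verification.
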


\begin{proof}
Indeed, its inverse is the map defined by: \[(\psi^a_T)^{-1} : w \mapsto  (w\star (p_T^a(\alpha(w)))^{-1},\alpha(w)).\]
The second part of the statement follows since for all vertices $b$ and $c$ adjacent in $T$, $p^a_T(b)$ and $p^a_T(c)$ are adjacent as well.
\end{proof}

Let us recall that for a group $\Gamma$ and a set $X$, an action of $\Gamma$ on $X$ is a map $(g,x) \mapsto g \cdot x$ from $\Gamma \times X$ to $X$ such that if $g$ is the identity element of $\Gamma$, then $g \cdot x = x$ for all $x \in X$, and for all $g,h \in \Gamma$ and $x \in X$, $g \cdot (h \cdot x) = (gh) \cdot x$.  An action is said to be \textbf{free} when $g \cdot x = x$ implies that $g$ is the identity element of $\Gamma$, and \textbf{transitive} on $Z \subset X$ when for all $z,z' \in Z$ there exists $g \in \Gamma$ such that $z' = g \cdot z$.

\begin{proposition}\label{prop:trans.deck}
     The map $(g,w) \rightarrow \eta_g(w)$ from $\pi_1(G)\times {\mathcal U_G}$ to ${\mathcal U_G}$ is a free group action which is transitive on each $\alpha^{-1}(b)$, $b \in V_G$.
\end{proposition}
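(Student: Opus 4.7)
My plan is to prove the two claims (freeness and transitivity on each fiber $\alpha^{-1}(b)$) separately, working in a fixed representative $\mathcal U_G[a]$ and $\pi_1(G)[a]$ for a chosen base vertex $a$. Both arguments will reduce to a single algebraic fact about $\star$: for any non-backtracking walk $w$ starting at $a$, we have $w \star w^{-1} = \varepsilon$ (the empty cycle at $a$). This is immediate from the definition of $\star$, because $w \odot w^{-1}$ is palindromic and therefore collapses entirely under successive backtrack removals.

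For freeness, suppose $\eta_g(w)=w$ for some $g \in \pi_1(G)[a]$ and $w \in \mathcal U_G[a]$. Unfolding the definition, $g \star w = w$. Applying $\star w^{-1}$ on the right and using the associativity of $\star$ stated just after the definition of $\star$, I obtain
\[
g = g \star (w \star w^{-1}) = (g \star w) \star w^{-1} = w \star w^{-1} = \varepsilon,
\]
which is the identity of $\pi_1(G)[a]$.

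For transitivity on a fiber $\alpha^{-1}(b)$, I take two elements $w,w' \in \alpha^{-1}(b)$, which are precisely the non-backtracking walks from $a$ to $b$. I set $g \coloneqq w' \star w^{-1}$; this is a non-backtracking cycle based at $a$, hence an element of $\pi_1(G)[a]$. A direct computation using associativity then gives
\[
\eta_g(w) = g \star w = (w' \star w^{-1}) \star w = w' \star (w^{-1} \star w) = w' \star \varepsilon = w',
\]
which is the desired transitivity. Combined with Lemma \ref{lemma: deck transformation}, which already gives the group action property, this completes the proof.

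The only point requiring any care is the palindrome-reduction fact $w \star w^{-1} = \varepsilon$, but since $\star$ is associative and is obtained by fully reducing concatenations, this follows from the definition. There is no substantive obstacle; the whole argument is a short application of the algebraic structure of $\star$ established in Section \ref{section.background.graphs}.
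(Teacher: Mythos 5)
Your proof is correct. For the freeness part you essentially make explicit what the paper leaves implicit: the paper simply asserts ``$g\star w=w$ implies $g$ is the identity,'' while you supply the right-cancellation argument via $w\star w^{-1}=\varepsilon$ and associativity of $\star$, both of which are indeed available from Section~\ref{section.background.graphs}. For the transitivity part, though, you take a genuinely different route: the paper invokes Lemma~\ref{lem:inv.psi}, writing $w=g'\star p_T(b)$ and $w'=g''\star p_T(b)$ and setting $g=g'\star(g'')^{-1}$, thereby leaning on the spanning-tree bijection $\psi^a_T$; you instead work directly with $g\coloneqq w'\star w^{-1}$ and verify $\eta_g(w)=w'$ by a one-line associativity computation. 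Your version is more elementary and self-contained---it avoids any reference to the spanning tree---whereas the paper's choice keeps the text short by re-using an already established lemma. Both are valid, and your direct computation that $g=w'\star w^{-1}$ is a non-backtracking cycle at $a$ (hence in $\pi_1(G)[a]$) and that $\eta_g(w)=w'\star(w^{-1}\star w)=w'$ is sound.
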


\begin{proof}
    By Lemma \ref{lemma: deck transformation} this map is a group action. Let us prove that it is free. Consider $g,w$ such that $\eta_g(w) = w$. This implies that $g\star w=w$ which implies that $g$ is the identity element of $\pi_1(G)$. Let us prove that the action is transitive on each $\alpha^{-1}(b), b \in V_G$. Consider two walks $w,w' \in \mathcal{U}_G$ such that $\alpha(w) = \alpha(w')$. By Lemma \ref{lem:inv.psi}, there exist $g'$ and $g''$ such that $w = g' \star p_T(b)$ and $w' = g'' \star p_T(b)$. We thus have $w' = g\star w = \eta_g(w)$, where $g = g' \star (g'')^{-1}$. 
\end{proof}

\begin{example}
Figure~\ref{fig.example.p.func} illustrates the definition of the map $v \mapsto p_T(v)$. Let us provide one example of value for the map $(\psi^a_T)^{-1}$ for each graph, from top to bottom. For the first graph, if $w=abcdabc$, then $(\psi^a_T)^{-1}(w) = (abcdabc da, c)$. On the second graph, if $w=abcad$ then $(\psi^a_T)^{-1}(w) = (abca, d)$. On the third graph, if $w = aabc$, then $(\psi^a_T)^{-1}(w) = (aabca, c)$.
\end{example}

\paragraph{Quotients of the universal cover}

\begin{notation}
   For an undirected graph $G$, a group $\Gamma$, and $(\gamma,a) \mapsto \gamma \cdot a$ an action of $\Gamma$ on $V_G$, we denote by $\Gamma(a)$ the orbit of $a \in V_G$ under the action of $\Gamma$, meaning that for all $a\in V_G$, $\Gamma(a) \coloneqq\{\gamma \cdot a ~:~\gamma\in \Gamma\}$.
\end{notation}

\begin{definition}
The \textbf{quotient graph} of the action of $\Gamma$ on $G$, denoted by $G/\Gamma$, is the undirected graph such that
$V_{G/\Gamma}=\{\Gamma(a)~:~a\in V_{G}\}$, and $E_{G/\Gamma}=\{(\Gamma(a), \Gamma (b))~:~\exists a',b' \in V_G \ | \ (a',b')\in E_G, \ a'\in \Gamma(a), \ b'\in \Gamma(b)\}.$ 
\end{definition}
\begin{notation}
    Let us consider a graph $G$ and $\Gamma$ a subgroup of $\pi_1(G)$. We denote by $\mathcal{U}_G/\Gamma$ the quotient of $\mathcal{U}_G$ by the action $(\gamma,w) \mapsto \gamma \star w$ of $\Gamma$. The homomorphism $\alpha$ yields a quotient homomorphism $\alpha^\Gamma : \mathcal{U}_G/\Gamma \to G$ by $\alpha^\Gamma(\Gamma(w)) \coloneqq \alpha(w)$ for all $w \in \mathcal{U}_G$. $\alpha^\Gamma$ is well-defined because all the elements in $\Gamma(w)$ end at the same vertex as $w$, and it is a homomorphism as, by definition of the quotient graph, if $(\Gamma(w),\Gamma(w'))\in E_{G/\Gamma}$, then there exist $\gamma,\xi\in \Gamma$ such that $(\gamma \star w, \xi\star w')\in E_{\mathcal U_G}$ which implies that $(\alpha(w), \alpha(w'))\in E_{G}$.
\end{notation}

\begin{proposition}
    For any undirected graph $G$, we have that $G$ is isomorphic to $\mathcal{U}_G / \pi_1(G)$.
\end{proposition}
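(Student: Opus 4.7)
The plan is to show that the quotient homomorphism $\alpha^{\pi_1(G)} : \mathcal{U}_G/\pi_1(G) \to G$ introduced just before the statement is in fact an isomorphism of graphs. The two ingredients already in hand are Proposition~\ref{prop:trans.deck} (freeness and transitivity on fibers of $\alpha$) and the fact, observed at the definition of the quotient, that $\alpha^{\pi_1(G)}$ is a well-defined graph homomorphism.

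First I would establish the vertex bijection. Since every $g \in \pi_1(G)$ satisfies $\alpha(g \star w) = \alpha(w)$, each $\pi_1(G)$-orbit is contained in some fiber $\alpha^{-1}(b)$. Conversely, Proposition~\ref{prop:trans.deck} asserts that $\pi_1(G)$ acts transitively on each $\alpha^{-1}(b)$, so each fiber is exactly one orbit. Hence the map that sends $b \in V_G$ to $\alpha^{-1}(b) \in V_{\mathcal{U}_G/\pi_1(G)}$ is a bijection, and its inverse is precisely $\alpha^{\pi_1(G)}$ on vertices.

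Next I would upgrade this to a graph isomorphism by verifying that $\alpha^{\pi_1(G)}$ reflects edges. One direction is automatic because $\alpha^{\pi_1(G)}$ is a graph homomorphism. For the other direction, fix an edge $(b,c) \in E_G$ and pick any walk $w \in \mathcal{U}_G$ with $\alpha(w) = b$; I want to exhibit a walk $w' \in \mathcal{U}_G$ adjacent to $w$ in $\mathcal{U}_G$ with $\alpha(w') = c$. If $l(w) = 0$ or if the vertex preceding $b$ in $w$ is not $c$, then $w' \coloneqq \varphi(wc) = wc$ is a non-backtracking walk ending at $c$, and $(w,w')$ is an edge of $\mathcal{U}_G$ by the very definition of the universal cover (it is an extension by one step). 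Otherwise the vertex preceding $b$ in $w$ is $c$, and then $w' \coloneqq w_0 \cdots w_{l(w)-1}$ is already a non-backtracking walk ending at $c$, with $(w',w)$ an edge of $\mathcal{U}_G$ (now $w$ is an extension of $w'$ by one step). In either case, projecting to the quotient yields the edge $(\pi_1(G)(w), \pi_1(G)(w')) \in E_{\mathcal{U}_G/\pi_1(G)}$ whose image under $\alpha^{\pi_1(G)}$ is $(b,c)$.

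There is no genuine obstacle here: once Proposition~\ref{prop:trans.deck} is available, the whole argument reduces to unpacking definitions. The only point that requires any care is the edge-lifting step, where one must avoid creating a backtrack when extending $w$ by the edge $(b,c)$; splitting into the two cases above (with the second case handled by the fact that $w'$ is then a prefix of $w$) resolves this minor technicality.
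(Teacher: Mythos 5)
Your proof is correct and follows essentially the same route as the paper's: the vertex bijection comes from the free and transitive action of $\pi_1(G)$ on fibers (Proposition~\ref{prop:trans.deck}), and edge reflection is obtained by lifting an edge of $G$ to an edge of $\mathcal{U}_G$ and projecting back to the quotient. You simply spell out the edge-lifting step more explicitly (with the two-case split on whether extending $w$ by the new vertex creates a backtrack), whereas the paper invokes it in a single line.
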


\begin{proof}
We already know that the action of $\pi_1(G)$ is free and transitive on $\alpha^{-1}(v)$ for all vertices $v\in G$. Thus it follows that map $\alpha^{\pi_1(G)}$ is a graph homomorphism which is bijective on the vertices. Any edge $(v,w)\in E_{G}$ lifts to an edge $(v', w')\in E_{\mathcal U_G}$, and then $(\pi_1(G)(v'), \pi_1(G)(w'))\in E_{\mathcal U_G/\pi_1(G)}$ satisfies $\alpha^{\pi_1(G)}(\pi_1(G)(v'), \pi_1(G)(w')) = (v,w)$. This shows that $\alpha^{\pi_1(G)}$ is a graph isomorphism.
\end{proof}

More generally, covers of $G$ can be constructed as quotients of the universal cover by subgroups of its fundamental group $\pi_1(G)$; this is a particular case of the classification theorem (Theorem 1.38 in \cite{Hatcher}). This provides a way to generate covers different from the universal cover. 

\begin{theorem}\label{thm:subgroup-cover}
    For any subgroup $\Gamma\subset \pi_1(G)$, $\alpha^\Gamma : \mathcal U_{G}/\Gamma \rightarrow G$ is a covering map, which makes $\mathcal U_{G}/\Gamma$ a cover of $G$. 
    Furthermore, $\pi_1(\mathcal U_{G}/\Gamma)$ is isomorphic to $\Gamma$. 
\end{theorem}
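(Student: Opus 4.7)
The plan is to first establish that $\alpha^\Gamma$ is a covering map by transferring the covering property of $\alpha$ through the quotient, and then to construct an isomorphism $\Phi : \pi_1(\mathcal U_G/\Gamma) \to \Gamma$ using the walk-lifting property. The key technical observation, used in both parts, is that the quotient map $q : \mathcal U_G \to \mathcal U_G/\Gamma$ defined by $q(w') = [w']$ induces a bijection $N_{\mathcal U_G}(w') \to N_{\mathcal U_G/\Gamma}([w'])$ for every vertex $w'$: if two distinct neighbors $w_1'', w_2''$ of $w'$ satisfied $w_1'' = \gamma \star w_2''$ for some $\gamma \in \Gamma$, then $\alpha(w_1'') = \alpha(w_2'')$, which combined with injectivity of $\alpha$ on $N_{\mathcal U_G}(w')$ and freeness of the $\Gamma$-action (Proposition~\ref{prop:trans.deck}) forces $w_1'' = w_2''$. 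Composing this local bijection with $\alpha|_{N_{\mathcal U_G}(w)}$ gives the bijection required for $\alpha^\Gamma$, and a similar freeness argument shows that neighborhoods of distinct preimages of any $a \in G$ under $\alpha^\Gamma$ are disjoint, completing the covering map verification.

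For the isomorphism, I fix a base vertex $w_0 \in \mathcal U_G$ and take $[w_0]$ as the base of $\mathcal U_G/\Gamma$. Given a non-backtracking cycle $\tilde c$ at $[w_0]$, the projected cycle $\alpha^\Gamma(\tilde c)$ in $G$ lifts by Proposition~\ref{prop: path lifting} to a unique walk $\hat c$ in $\mathcal U_G$ starting at $w_0$, and an induction on position using the local bijection above gives $q(\hat c) = \tilde c$. Since $\tilde c$ is a cycle, $\hat c$ ends in $q^{-1}([w_0])$, which by transitivity and freeness of the $\Gamma$-action equals $\{\gamma \star w_0 : \gamma \in \Gamma\}$. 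I then define $\Phi(\tilde c)$ to be the unique $\gamma \in \Gamma$ with $\hat c_{l(\hat c)} = \gamma \star w_0$.

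To verify $\Phi$ is a group homomorphism, I observe that the lift of $\tilde c_1 \odot \tilde c_2$ starting at $w_0$ is the concatenation of $\hat c_1$ with $\eta_{\gamma_1}(\hat c_2) = \gamma_1 \star \hat c_2$, which ends at $\gamma_1 \gamma_2 \star w_0$; removing backtracks in $\mathcal U_G/\Gamma$ pulls back to removing them in $\mathcal U_G$ and does not alter the endpoint. For surjectivity, given $\gamma \in \Gamma$, I take the unique non-backtracking walk $w$ from $w_0$ to $\gamma \star w_0$ in the acyclic graph $\mathcal U_G$; its projection $q(w)$ is a cycle at $[w_0]$ and stays non-backtracking because $q(w_i) = q(w_{i+2})$ combined with $w_i \neq w_{i+2}$ would contradict the local bijection $q|_{N_{\mathcal U_G}(w_{i+1})}$. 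For injectivity, if $\Phi(\tilde c)$ is the identity, then $\hat c$ is a non-backtracking cycle in $\mathcal U_G$ and must therefore have length zero, whence $\tilde c = q(\hat c)$ is also trivial.

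The main obstacle I anticipate is ensuring that backtrack removal remains compatible across $\mathcal U_G$ and $\mathcal U_G/\Gamma$ throughout the argument, in particular that the projection of a non-backtracking walk is non-backtracking and that $\star$ behaves well under lifting. Every instance of this reduces to the same local bijection $q : N_{\mathcal U_G}(w') \to N_{\mathcal U_G/\Gamma}([w'])$, which is the leverage provided by the freeness of the $\Gamma$-action on $\mathcal U_G$.
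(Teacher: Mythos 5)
Your proof is correct, and it actually proves \emph{more} than the paper's own proof does. For the first claim (that $\alpha^\Gamma$ is a covering map), your argument is essentially the same as the paper's: both verify the two local-isomorphism axioms for $\alpha^\Gamma$ by appealing to the covering property of $\alpha$ and the free transitive $\Gamma$-action on fibres (Proposition~\ref{prop:trans.deck}); you just organize it slightly differently, isolating the local bijection $q|_{N_{\mathcal U_G}(w')}: N_{\mathcal U_G}(w')\to N_{\mathcal U_G/\Gamma}(\Gamma(w'))$ as a reusable lemma and writing $\alpha^\Gamma = \alpha\circ q^{-1}$ locally, which is a clean way to package the same calculation. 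For the second claim (that $\pi_1(\mathcal U_G/\Gamma)\cong\Gamma$), the paper's proof in fact stops after the covering-map verification and never establishes this isomorphism; your construction of $\Phi:\pi_1(\mathcal U_G/\Gamma)[[w_0]]\to\Gamma$ via walk-lifting, together with the verifications of the homomorphism property (using associativity of $\star$ and compatibility of backtrack removal with lifting), surjectivity (via the unique non-backtracking walk to $\gamma\star w_0$ in the tree $\mathcal U_G$, which projects to a non-backtracking cycle by the local bijection), and injectivity (a length-zero non-backtracking cycle in the acyclic $\mathcal U_G$), is a complete and correct proof of this half and fills a genuine gap in the paper's exposition. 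One small remark: in the homomorphism verification, the sentence ``removing backtracks in $\mathcal U_G/\Gamma$ pulls back to removing them in $\mathcal U_G$'' deserves a one-line justification --- it follows because $q$ is a local bijection, so $(\hat c_1\odot\gamma_1\star\hat c_2)_i = (\hat c_1\odot\gamma_1\star\hat c_2)_{i+2}$ if and only if the corresponding equality holds downstairs --- but this is exactly the lemma you already isolated, so the argument is sound.
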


\begin{proof}
    As discussed above, the map $\alpha^\Gamma$ is a graph homomorphism. Let us prove that it is a local isomorphism. Fix some $b\in V_G$ and distinct walks $w, w'\in U_G$ such that they terminate at $b$. Let us see why $N_{\mathcal U_G/\Gamma}(\Gamma(w))$ and $N_{\mathcal U_G/\Gamma}(\Gamma(w'))$ are disjoint. If not, suppose that they have a common vertex $\Gamma(w'')$. Then there are $\gamma, \xi, \gamma', \xi'\in\Gamma$ such that $(\gamma \star w , \xi \star w'') \in E_{\mathcal{U}_G}$ and $(\gamma' \star w' , \xi' \star w'') \in E_{\mathcal{U}_G}$. 
     Since $\alpha : \mathcal{U}_G \rightarrow G$ is a covering map and $\alpha(\xi \star w'')=\alpha(\xi' \star w'')$, we have $\gamma \star w = \gamma' \star w'$, so $\Gamma(w) = \Gamma(w')$.

We are left to prove that for all $w \in \mathcal U_{G}$, $\alpha^\Gamma$ is a bijection from $N_{\mathcal U_{G}/\Gamma}(\Gamma(w))$ to $N_G(\alpha(w))$. It is surjective because $\Gamma(w)$ is a neighbor of $\Gamma(\varphi(wb))$ for all $b \in N_G(\alpha(w))$. Let us see that it is injective. Consider a walk $w'$ ending at $b$ such that $(\gamma \star w, \xi \star w') \in E_{\mathcal{U}_G}$ for some $\gamma,\xi\in\Gamma$. Then $\xi \star w' = \varphi(\gamma \star wb)$, which implies that $w' = (\xi^{-1}\gamma)\star (wb)$, so $\Gamma(w') = \Gamma(\varphi(wb))$.
\end{proof}

Furthermore, every cover of $G$ is the quotient of the universal cover by a subgroup of the fundamental group: 

\begin{remark}
    \label{theorem: covering maps and subgroups}
    Let $G$ be a graph, $\tilde G$ a cover of $G$ 
    and $\theta : \tilde G \rightarrow G$ a covering map. Then there is a natural embedding of $\Gamma=\pi_1(\tilde G)$ in $\pi_1(G)$ and $\mathcal U_G/\Gamma$ is isomorphic to $\tilde G$. 
\end{remark}
The proof of this fact is a little more involved but here is a brief sketch. Since any cycle in $\tilde G$ projects to a cycle in $G$, we get a group homomorphism from $\pi_1(\tilde G)$ to $\pi_1(G)$. Since cycles in $G$ have unique lifts in $\tilde G$ up to a choice of base vertex we have that the group homomorphism into $\pi_1(G)$ is injective. Thus we can henceforth identify $\pi_1(\tilde G)$ as a subgroup of $\pi_1(G)$. Finally by careful bookkeeping one can show that $\mathcal U_G/\pi_1(\tilde G)$ is isomorphic to $\tilde G$.

\subsection{Regular covers}\label{section:regular.covers}

In this section, we introduce the notion of regular cover. For these covers, the action by deck transformations has properties which are useful in the remainder of the article.

\begin{definition}\label{definition.regular.cover}
    A \textbf{regular} cover of an undirected graph $G$ is a cover of the form $\mathcal{U}_G/\Gamma$, where $\Gamma\unlhd \pi_1(G)$.
\end{definition}

\begin{remark}
In particular, the universal cover is regular.
\end{remark}

We fix for this section a normal subgroup $\Gamma\unlhd \pi_1(G)$. 

\begin{notation}
    We define an action of $\Gamma$ on $\mathcal{U}_{G / \Gamma}$ by setting, for every $h  \in \pi_1(G)$,
\[\eta_{h\Gamma} : \begin{array}{rcl}
\mathcal{U}_{G/\Gamma} &\to& \mathcal{U}_{G/\Gamma}\\
\Gamma(w)&\mapsto&\Gamma(\eta_h(w)).\end{array}\]
\end{notation}

To see that these functions are well defined, we need to check that for all $w \in \mathcal{U}_G$, $h \in \pi_1(G)$ and $g \in \Gamma$, $\Gamma(\eta_h(\eta_g(w))) = \Gamma(\eta_h(w))$. This is true because, since the subgroup $\Gamma$ is normal, there exists $g' \in \Gamma$ such that $hg = g' h$. Thus $\Gamma(\eta_h(\eta_g(w)))  = \Gamma(\eta_{hg}(w)) = \Gamma(\eta_{g'}(\eta_{h}(w))) = \Gamma(\eta_h(w))$.
\begin{theorem}\label{theorem: transitive action of the regular covers}
For every $h\in \pi_1(G)$, 
    the map $\eta_{h\Gamma}$ is a deck transformation for $\alpha^{\Gamma} : \mathcal{U}_G/\Gamma \rightarrow G$. 
    The map $(h\Gamma,\Gamma(w)) \mapsto \eta_{h\Gamma}(\Gamma(w))$ is a free action of the group on {$\mathcal{U}_G/\Gamma$} which is transitive on $(\alpha^\Gamma)^{-1}(b)$ for all $b\in V_G$. Finally, $(\mathcal{U}_G/\Gamma)/(\pi_1(G)/\Gamma)$ is isomorphic to the graph $G$.
\end{theorem}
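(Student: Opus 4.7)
The plan is to verify the three assertions in sequence, each building on the preceding one, mimicking the earlier proof that $\mathcal{U}_G/\pi_1(G) \cong G$ but carrying the normality hypothesis on $\Gamma$ through carefully.

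\textbf{Step 1.} I would first confirm that $\eta_{h\Gamma}$ is a graph automorphism of $\mathcal{U}_G/\Gamma$ commuting with $\alpha^\Gamma$. Well-definedness has already been noted in the text (this is where normality is crucially used: $hg = (hgh^{-1})h$ with $hgh^{-1}\in\Gamma$). Being a graph homomorphism follows because if $(\Gamma(w),\Gamma(w'))\in E_{\mathcal{U}_G/\Gamma}$, there are $\gamma,\xi\in\Gamma$ with $(\gamma\star w,\xi\star w')\in E_{\mathcal{U}_G}$, and applying the graph automorphism $\eta_h$ of $\mathcal{U}_G$ gives $(h\gamma\star w,h\xi\star w')\in E_{\mathcal{U}_G}$; rewriting $h\gamma=\gamma'h$ and $h\xi=\xi'h$ (normality again) exhibits an edge between $\Gamma(h\star w)$ and $\Gamma(h\star w')$ in $\mathcal{U}_G/\Gamma$. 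The inverse is $\eta_{h^{-1}\Gamma}$, so $\eta_{h\Gamma}$ is an automorphism. Equivariance with $\alpha^\Gamma$ reduces to $\alpha\circ\eta_h=\alpha$, which is already known from Lemma~\ref{lemma: deck transformation}.

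\textbf{Step 2.} Next I would check that $(h\Gamma,\Gamma(w))\mapsto\eta_{h\Gamma}(\Gamma(w))$ is a free action of $\pi_1(G)/\Gamma$, transitive on each fiber of $\alpha^\Gamma$. Well-definedness on cosets: if $h_1\Gamma=h_2\Gamma$, then $h_2=h_1g$ for some $g\in\Gamma$, and $\Gamma(h_2\star w)=\Gamma(h_1g\star w)=\Gamma(h_1\star w)$ because $h_1g=(h_1gh_1^{-1})h_1$ with $h_1gh_1^{-1}\in\Gamma$. The identity and associativity axioms follow from associativity of $\star$ exactly as in Lemma~\ref{lemma: deck transformation}. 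Freeness: if $\eta_{h\Gamma}(\Gamma(w))=\Gamma(w)$, there is $g\in\Gamma$ with $h\star w=g\star w$; freeness of the $\pi_1(G)$-action on $\mathcal{U}_G$ (Proposition~\ref{prop:trans.deck}) yields $h=g\in\Gamma$, so $h\Gamma$ is trivial. Transitivity on $(\alpha^\Gamma)^{-1}(b)$: given $\Gamma(w),\Gamma(w')$ with $\alpha(w)=\alpha(w')=b$, transitivity of the $\pi_1(G)$-action on $\alpha^{-1}(b)$ produces $h$ with $w'=h\star w$, whence $\Gamma(w')=\eta_{h\Gamma}(\Gamma(w))$.

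\textbf{Step 3.} Finally I would argue that $(\mathcal{U}_G/\Gamma)/(\pi_1(G)/\Gamma)\cong G$ by a direct adaptation of the isomorphism $\mathcal{U}_G/\pi_1(G)\cong G$. The covering map $\alpha^\Gamma$ (Theorem~\ref{thm:subgroup-cover}) is constant on $\pi_1(G)/\Gamma$-orbits, so it factors through a graph homomorphism from the double quotient to $G$. By Step 2 the action is free and transitive on the fibers of $\alpha^\Gamma$, so the orbits are exactly these fibers and the induced map is a bijection on vertices. Edge-surjectivity and the homomorphism property for the inverse follow from the fact that any edge of $G$ lifts to an edge of $\mathcal{U}_G/\Gamma$ (since $\alpha^\Gamma$ is a covering map), which in turn descends to an edge in the double quotient.

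The main potential pitfall is not the conceptual structure --- everything parallels the $\mathcal{U}_G/\pi_1(G)\cong G$ argument --- but the careful bookkeeping with normality in Step 1 and Step 2, where one must distinguish between well-definedness of $\eta_{h\Gamma}$ as a map (a $\Gamma$-invariance check on the source) and well-definedness as an action on cosets (a $\Gamma$-invariance check on the group parameter). Both hinge on conjugation $hgh^{-1}\in\Gamma$, and a clean presentation requires separating them.
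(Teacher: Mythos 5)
Your proof is correct and takes essentially the same approach as the paper's (brief) sketch: lift to the free and transitive action of $\pi_1(G)$ on fibers of $\alpha$, descend through the quotient (using normality of $\Gamma$ in the two places you identify), and read off freeness, transitivity, and the final isomorphism. The paper's version is a sketch that elides the bookkeeping you spell out, and it additionally invokes Proposition~\ref{prop: uniqueness of deck transformation} (not strictly needed for the statement as written, and which you correctly omit); but there is no substantive divergence.
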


\begin{proof}[Sketch of proof]
The group $\pi_1(G)$ acts freely and transitively on each preimage by $\alpha$ of a vertex in $\mathcal U_G$.
Thus for all $b\in V_G$, $\pi_1(G)$ acts transitively on the set $\{\Gamma(w) : w \in \mathcal{U}_G, \alpha(w)=b\}$, and the stabilizer of this action is $\Gamma$.
This induced action preserves adjacency and from this it follows that $\pi_1(G)/\Gamma$ on $\mathcal U_G/\Gamma$ acts by deck transformations. It is both transitive and free on the preimage of $b$ for the covering map. The uniqueness follows from Proposition \ref{prop: uniqueness of deck transformation}. Finally using the transitivity of the map we can conclude that ${ (\mathcal{U}_G/\Gamma)}/(\pi_1(G)/\Gamma)$ is isomorphic to the graph $G$.
\end{proof}

\section{\label{section.square.quotients}Quotienting by squares}

Let us recall that a square in $G$ is a non-backtracking cycle of length $4$. 
This means that it can be written as $a _0 a_1 a_2 a_3 a_4$ with $a_0 = a_4$, $a_0\neq a_2$ and $a_1\neq a_3$. 

\begin{definition}
Given a covering map $\theta : \tilde G \rightarrow G$ and $x\in X^d_G$,
a configuration $\tilde x \in X^d_{\tilde G}$ is called a \textbf{lift} of $x$ for $\theta$ if $\theta(\tilde x_{\boldsymbol{u}})=x_{\boldsymbol{u}}$ for all $\boldsymbol{u}\in \Z^d$.
\end{definition}

\begin{notation}
For all walks $p$ in $\mathbb{Z}^d$, and $x$ a configuration of $X^d_G$ we denote by $x_p$ the walk $x_{p_0} \ldots x_{p_{l(p)}}$.
\end{notation}

In this section, for all graph $G$ and integer $d>1$, we formulate a necessary and sufficient condition for a cover $\tilde G$ to be such that every configuration of $X^d_G$ admits a lift in $X^d_{\tilde{G}}$ (for $\alpha^{\Gamma}$, where $\Gamma$ is such that $\tilde G$ is isomorphic to $\pi_1(G) / \Gamma$): this is possible exactly when every square of $G$ can be lifted to a square of $\tilde G$ (which is independent of the choice of lift for the base vertex). 
The universal cover $\mathcal{U}_G$ does not always satisfy this: for example, for $G=C_4$ (cycle graph with $4$ vertices), $\mathcal{U}_{C_4}$ is the graph of $\mathbb{Z}$ and contains no square. Therefore the squares in $G$ do not admit any lift. On the other hand, $G$, as a cover of itself, trivially satisfies the condition. We define the square cover of $G$ as the largest cover of $G$ for which this property is true.

{The above fact} is not surprising: we know from standard algebraic topology \cite[Chapter 2]{Hatcher} that in order for a configuration $x\in X^d_G$ with $x_{\boldsymbol{0}}=a$ to have a lift $\tilde x\in X_{\mathcal U_G[a]}$, the natural map $x_\star:\pi_1(\Z^d)[0]\to \pi_1(G)[a]$ induced by this configuration must be constant with value $\mathbbm{1}_{\pi_1(G)[a]}$, which is a strong constraint. 

The intuition which underlies the definition of the square group comes from the very origins of algebraic topology. It can be found for instance in \cite[Proposition \textbf{1.26}]{Hatcher} and \cite{rotman1973covering}. The same idea was used in the context of searching for continuous factors from the free part of the full shift in \cite{gao2018continuous} as mentioned in the introduction. We will see that it is related (Section \ref{section:sq.cover}) to the square cover, a notion introduced in \cite{GHO23}. All of this indicates that this is an important object of study and we expect many more connections with the dynamics beyond what has been mentioned and will be explored in a forthcoming paper.

\subsection{The square group} \label{Section: Square group cover}

\subsubsection{Definition}

\begin{notation}
      Let us denote by $\Delta(G)[a]$ the subgroup of $\pi_1(G)[a]$ generated by elements of the form $p \star s \star {p^{-1}}$, where $p$ is a non-backtracking walk and $s$ is a square. As all $\Delta(G)[a]$ are isomorphic, we denote \notationidx{$\Delta(G)$}{the subgroup of $\pi_1(G)$ generated by elements of the form $p \star s \star {p^{-1}}$, where $p$ is a non-backtracking walk and $s$ is a square
      } its isomorphic class and call the elements of $\Delta(G)$ \textbf{square-decomposable} cycles of $G$.
\end{notation}

\begin{definition}\label{def:sqgroup}
    Let us denote by $\pi_1^{\square}(G)[a]$ the quotient of $\pi_1(G)[a]$ by $\Delta(G)[a]$. The groups $\pi^\square_1(G)[a]$, $a \in G$ are all isomorphic to  \notationidx{$\pi_1^{\square}(G)$}{the square group of an undirected graph $G$}$ \coloneqq \pi_1(G)/\Delta(G)$, which we call \textbf{square group} of $G$.
\end{definition}

\begin{remark}\label{remark: cycle in delta group}
    It follows from Definition \ref{def:sqgroup} that a cycle $c\in \pi_1(G)$ is an element of $\Delta(G)$ if and only if
\[c=c_1\star c_2 \star \ldots\star c_n,\quad \text{where}\quad c_i=p_i\odot s_i \odot (p_i)^{-1}\] with each $p_i$ a non-backtracking walk and $s_i$ a square.
\end{remark}

\subsubsection{Finite presentation}

Let us see that the square group is finitely presented by exhibiting a particular set of generators and relations. Fix a spanning tree $T$ of $G$ and a base vertex $a$ of $G$.

\begin{notation}
    For any square $s$ in $G$, we denote by \notationidx{$\Delta_T^a(s)$}{notation for the cycle $p_T^a(s_0) \star s \star (p_T^a(s_0))^{-1}$} the cycle $p_T^a(s_0) \star s \star (p_T^a(s_0))^{-1}$. Here as well, we will omit the base vertex $a$ from the notation when it is clear in context.
\end{notation}

\begin{proposition}\label{prop:snormsubgroup}
    The group $\Delta(G)[a]$ is the smallest normal subgroup of $\pi_1(G)[a]$ containing the cycles $\Delta^a_T(s)$, where $s$ is a square of $G$.
\end{proposition}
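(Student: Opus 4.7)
The plan is to establish the equality by two inclusions. Let $N$ denote the smallest normal subgroup of $\pi_1(G)$ containing all cycles of the form $\Delta_T(s)$ for $s$ a square of $G$. I will show first that $\Delta(G)$ is itself a normal subgroup containing all the $\Delta_T(s)$ (which gives $N \subseteq \Delta(G)$), and then that every generator of $\Delta(G)$ is a conjugate of some $\Delta_T(s)$ in $\pi_1(G)$ (which gives $\Delta(G) \subseteq N$).

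For the first inclusion, the containment of each $\Delta_T(s)$ in $\Delta(G)$ is immediate from the definition, since $p_T(s_0)$ is a particular non-backtracking walk. Normality of $\Delta(G)$ follows from a direct computation on generators: given $g \in \pi_1(G)[a]$ and a generator $p \star s \star p^{-1}$ (where $p$ is a non-backtracking walk from $a$ to $s_0$ and $s$ is a square at $s_0$), associativity of $\star$ and the identity $(x \star y)^{-1} = y^{-1} \star x^{-1}$ yield
\[
g \star (p \star s \star p^{-1}) \star g^{-1} = (g \star p) \star s \star (g \star p)^{-1}.
\]
Since $g \star p$ is still a non-backtracking walk from $a$ to $s_0$, the conjugate is again a generator of $\Delta(G)$.

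For the reverse inclusion, I will show that an arbitrary generator $p \star s \star p^{-1}$ of $\Delta(G)$ lies in $N$. Set $v := s_0$ and consider the cycle $g := p \star (p_T(v))^{-1} \in \pi_1(G)[a]$. Using associativity of $\star$ and the fact that $p_T(v)^{-1} \star p_T(v)$ reduces to the trivial walk at $v$, a direct computation gives
\[
g \star \Delta_T(s) \star g^{-1} = \bigl(p \star (p_T(v))^{-1}\bigr) \star \bigl(p_T(v) \star s \star p_T(v)^{-1}\bigr) \star \bigl(p_T(v) \star p^{-1}\bigr) = p \star s \star p^{-1}.
\]
Thus $p \star s \star p^{-1}$ is a conjugate of $\Delta_T(s)$ in $\pi_1(G)$, hence lies in $N$ by normality. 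Since generators lie in $N$ and $N$ is a subgroup, $\Delta(G) \subseteq N$, completing the proof.

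I do not anticipate any serious obstacle: the argument is entirely algebraic and the only care needed is in checking the associativity-based manipulations with $\star$ and the correct handling of inverses. The only subtlety is that $\Delta(G)$ is defined with walks $p$ starting at the base vertex $a$, which is exactly the setup that makes the conjugation trick by $g = p \star (p_T(v))^{-1}$ work cleanly.
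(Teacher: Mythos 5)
Your proof is correct and follows essentially the same route as the paper's. Both arguments rest on the same computation: conjugating $\Delta_T(s)$ by an element of $\pi_1(G)$ produces a generator $p \star s \star p^{-1}$ of $\Delta(G)$, and conversely every such generator arises this way. You organize this as ``each generator $p \star s \star p^{-1}$ is a conjugate of $\Delta_T(s)$,'' while the paper says ``conjugating $\Delta_T(s)$ by $p \in \pi_1(G)$ yields $(p \star p_T(s_0)) \star s \star (p \star p_T(s_0))^{-1}$ and every non-backtracking walk factors as $p \star p_T(b)$.'' These are the same observation read in opposite directions, so there is no substantive difference; your version is slightly more explicit about the two inclusions and about why $g = p \star (p_T(v))^{-1}$ is the right conjugating element, which is a fine way to present it.
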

\begin{proof}
    The fact that $\Delta(G)[a]$ is a normal subgroup simply comes from the associativity of $\star$. 
    Furthermore, $\Delta(G)[a]$ contains all the cycles of the form $\Delta^a_T(s)$. Every normal subgroup $H$ of $\pi_1(G)[a]$ containing these cycles also contains the cycles 
    \[ p \star \Delta^a_T(s)\star p^{-1} = (p \star p^a_T(s_0))) \star s \star (p \star p^a_T(s_0))^{-1},\]
    where $p \in \pi_1(G)[a]$. Since every non-backtracking walk on $G$ can be written as $p\star p^a_T(b)$, where $p \in \pi_1(G)[a]$ and $b$ is a vertex of $G$, $H$ contains  $\Delta(G)[a]$.
\end{proof}

For simplicity we say that a word $e_0e_1e_2e_3$ on alphabet $E_G$ is a square if $e_i = (s_i,s_{i+1})$ for some square $s$. Denote by $R^{\square}_T(G)$ the union of $R_T(G)$ with all squares of $G$.

\begin{theorem}\label{thm:REforSquare}
    For any finite undirected graph $G$ and $T$ a spanning tree of $G$, we have  \[\pi^\square_1(G)\cong\langle E_G:R^{\square}_T(G)\rangle \cong\langle E_G \backslash E_T: R^{\square}_T(G)\backslash E_T\rangle.\]
    As a consequence $\pi_1^{\square}(G)$ is finitely presented.
\end{theorem}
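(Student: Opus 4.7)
The plan is to apply Proposition \ref{proposition: Free product}, which gives $\pi_1(G)\cong \langle E_G:R_T(G)\rangle$ via the isomorphism $\beta_T$, and then pass to a quotient on both sides. Since by definition $\pi_1^\square(G)=\pi_1(G)/\Delta(G)$, and since adding relations to a presentation amounts exactly to quotienting by the normal subgroup they generate, the theorem reduces to showing that, under the identification provided by $\beta_T$, the normal closure of the square words in $\langle E_G:R_T(G)\rangle$ corresponds to $\Delta(G)\subseteq \pi_1(G)$.

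First, I would observe that a square word $e_0e_1e_2e_3$, where $e_i=(s_i,s_{i+1})$ for a square $s=s_0s_1s_2s_3s_0$, is mapped by $\beta_T$ to exactly $\Delta_T(s) = p_T(s_0)\star s\star (p_T(s_0))^{-1}$. Indeed, evaluating
\[\beta_T(e_0)\star\beta_T(e_1)\star\beta_T(e_2)\star\beta_T(e_3)\]
with the formula from Equation~\eqref{equation: defining the map on edges} for each $e_i$, the intermediate $p_T(s_i)$ and $(p_T(s_i))^{-1}$ telescope under $\star$, and the same telescoping holds when some $e_i$ lie in $T$ (since $\beta_T$ sends these to $\varepsilon$, which is consistent with the telescoping). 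This is precisely the inverse map described in the proof of Proposition \ref{proposition: Free product}, applied to the cycle $s$ based at $s_0$, then conjugated by $p_T(s_0)$ to base it at $a$.

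Next, invoking Proposition \ref{prop:snormsubgroup}, I would note that $\Delta(G)$ is the smallest normal subgroup of $\pi_1(G)$ containing the family $\{\Delta_T(s):s\text{ a square of }G\}$. Combined with the previous step, this shows that $\beta_T$ maps the normal closure of the square words in $\langle E_G:R_T(G)\rangle$ bijectively onto $\Delta(G)$. By the universal property of the quotient (equivalently the third isomorphism theorem applied to $\beta_T$), the induced map
\[\langle E_G:R_T(G)\cup\{\text{squares of }G\}\rangle \longrightarrow \pi_1(G)/\Delta(G)=\pi_1^\square(G)\]
is an isomorphism, which is exactly the desired statement since $R_T^\square(G)=R_T(G)\cup\{\text{squares of }G\}$.

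Finally, for the finite presentation claim: $E_G$ is finite, $R_T(G)$ is finite (it consists of the edges of the spanning tree together with the involutions $(u,v)(v,u)$ for each edge pair, both finite in number), and the number of squares of $G$ is bounded by $|V_G|^4$, hence finite. The main (and essentially only) nontrivial step is the telescoping calculation identifying $\beta_T$ of a square word with the conjugate $\Delta_T(s)$; once this is in place, the rest is a mechanical application of the correspondence between adding relations and quotienting by the corresponding normal subgroup.
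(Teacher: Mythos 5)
Your proof is correct and follows essentially the same route as the paper's: start from the presentation $\pi_1(G)\cong\langle E_G:R_T(G)\rangle$ furnished by Proposition~\ref{proposition: Free product} via $\beta_T$, use Proposition~\ref{prop:snormsubgroup} to identify $\Delta(G)$ with the normal closure of $\{\Delta_T(s)\}$, and observe that under $\beta_T$ the square words correspond exactly to the $\Delta_T(s)$, so that adding them as relations gives $\pi_1^\square(G)$. Your version spells out the telescoping computation $\beta_T(e_0)\star\cdots\star\beta_T(e_3)=\Delta_T(s)$ (including the case where some $e_i$ lies in $T$), which the paper leaves implicit, but the key lemmas invoked and the overall structure are identical.
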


\begin{proof}
We use the notations introduced in Section~\ref{sec:fundamental_group}.
The group $\pi_1(G)$ is generated by the cycles $\beta_T((u,v))$, where $(u,v)$ is an edge of $G$ which is not in $T$, and the only relations are $\beta_T((u,v))\beta_T((v,u)) = 1$ for all $(u,v) \in E_G$.
From Proposition \ref{prop:snormsubgroup}, a presentation of $\pi_1^{\square}(G)$ is obtained from this presentation of $\pi_1(G)$ by adding the relations $\Delta_T(s) = 1$ for all squares $s$. These relations are equivalent to the relations $\beta_T\left(\Delta_T(s)\right) = 1$, which can be rewritten as 
\[\beta_T((s_0, s_1)) \beta_T((s_1, s_2)) \beta_T((s_2 ,s_3)) \beta_T((s_3, s_4)) = 1.\]
Since there are finitely many squares in $G$, this provides a finite presentation of $\pi_1^{\square}(G)$. The statement is yielded by rewriting $\beta_T((u,v))$ as $(u,v)$.
\end{proof}

\begin{example}
    Figure~\ref{fig.example.square.group} provides examples for the definition of square group. The graph $C_4$ (first from the top) has trivial square group; this is the case whenever all the cycles in the graph are square-decomposable (that is, elements of $\Delta(G)$). The square group of the second graph is equal to its fundamental group, because this graph does not have any squares. The square group of the third graph is $\mathbb Z/2\mathbb Z$, hence nontrivial and different from the fundamental group.
\end{example}

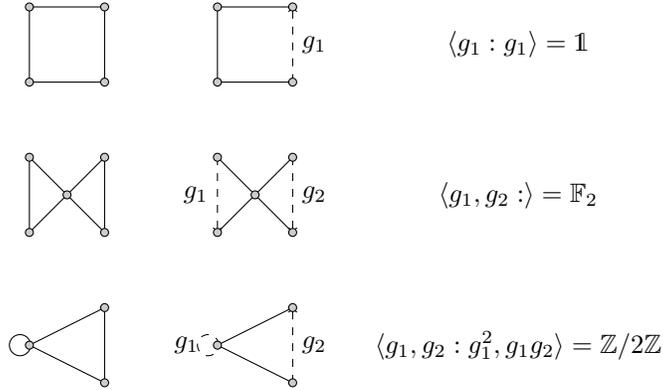
\begin{figure}[ht!]
    \centering
        \begin{tikzpicture}[scale=0.5]
        \draw (0,0) rectangle (2,2);
        \draw[fill=gray!40] (0,0) circle (3pt);
        \draw[fill=gray!40] (2,2) circle (3pt);
        \draw[fill=gray!40] (2,0) circle (3pt);
        \draw[fill=gray!40] (0,2) circle (3pt);

        \begin{scope}[yshift=-3cm]
        \draw (1,0) -- (2,1) -- (2,-1) -- (1,0) -- (0,1) -- (0,-1) -- (1,0);
        \draw[fill=gray!40] (1,0) circle (3pt);
        \draw[fill=gray!40] (2,1) circle (3pt);
        \draw[fill=gray!40] (2,-1) circle (3pt);
        \draw[fill=gray!40] (0,-1) circle (3pt);
        \draw[fill=gray!40] (0,1) circle (3pt);
        \end{scope}

        \begin{scope}[yshift=-7cm]
        \draw (-0.25,0) circle (8pt);
        \draw (0,0) -- (2,1) -- (2,-1) -- (0,0);
        \draw[fill=gray!40] (0,0) circle (3pt);
        \draw[fill=gray!40] (2,1) circle (3pt);
        \draw[fill=gray!40] (2,-1) circle (3pt);
        \end{scope}

        \begin{scope}[xshift=5cm]
        \draw (2,0) -- (0,0) -- (0,2) -- (2,2);
        \draw[dashed, ->] (2,0) -- (2,2) node[midway, right] {$g_1$};
        \draw[fill=gray!40] (0,0) circle (3pt);
        \draw[fill=gray!40] (2,2) circle (3pt);
        \draw[fill=gray!40] (2,0) circle (3pt);
        \draw[fill=gray!40] (0,2) circle (3pt);
        \node at (8,1) {$\langle g_1 : g_1\rangle = \mathds{1}$};
        \end{scope}

        \begin{scope}[xshift=5cm,yshift=-3cm]
        \draw (0,1) -- (2,-1);
        \draw (0,-1) -- (2,1);
        \draw[dashed, ->] (0,1) -- (0,-1) node[midway, left] {$g_1$};
        \draw[dashed, ->] (2,1) -- (2,-1) node[midway, right] {$g_2$};
        \draw[fill=gray!40] (1,0) circle (3pt);
        \draw[fill=gray!40] (2,1) circle (3pt);
        \draw[fill=gray!40] (2,-1) circle (3pt);
        \draw[fill=gray!40] (0,-1) circle (3pt);
        \draw[fill=gray!40] (0,1) circle (3pt);
        \node at (8,0) {$\langle g_1, g_2: \rangle=\mathbb{F}_2$};
        \end{scope}

        \begin{scope}[xshift=5cm,yshift=-7cm]
        \draw[dashed] (-0.25,0) circle (8pt) node[left] {$g_1$};
        \draw (0,0) -- (2,1);
        \draw (2,-1) -- (0,0);
        \draw[dashed, ->] (2,-1) -- (2,1) node[midway, right] {$g_2$};
        \draw[fill=gray!40] (0,0) circle (3pt);
        \draw[fill=gray!40] (2,1) circle (3pt);
        \draw[fill=gray!40] (2,-1) circle (3pt);
        \node at (8,0) {$\langle g_1,g_2: g_1^2, g_1g_2\rangle = \mathbb{Z}/2\mathbb{Z}$};
        \end{scope}
    \end{tikzpicture}
    \caption{Illustration for the definition of square group. Left: the graph $G$. Middle: a choice of spanning tree (full edges). Right: the corresponding presentation for the square group of $G$ (where we denote by $\mathds{1}$ the trivial group).}\label{fig.example.square.group}
\end{figure}

\subsection{The square cover\label{section:sq.cover}}

\subsubsection{Definition}

The graphs $\mathcal{U}_G^{\square}[a] = \mathcal U_G[a]/\Delta(G)[a]$, $a \in V_G$, are all isomorphic. We call \notationidx{$\mathcal{U}_G^{\square}$}{the square cover of an undirected graph $G$} the \textbf{square cover} of the graph $G$, omitting the base vertex from the notation. It is the cover of the graph $G$ corresponding to the subgroup $\Delta(G)\unlhd \pi_1(G)$.
We provide a more elementary description at the end of this section.

Since $\Delta(G)$ is a normal subgroup of $\pi_1(G)$ (Proposition~\ref{prop:snormsubgroup}), $\mathcal{U}_G^{\square}$ is a regular cover of $G$.
We denote by \notationidx{$\alpha^{\square}$}{quotient covering map for the square cover}$:\mathcal U^\square_G\to G$ the covering map obtained by quotienting the covering map $\alpha:\mathcal U_G\to G$. The following proposition is immediate from Proposition \ref{theorem: transitive action of the regular covers}.

\begin{proposition}\label{Proposition: square cover finite square group}
    The square group of $G$ is finite if and only if the square cover of $G$ is finite.
\end{proposition}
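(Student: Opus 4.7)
The plan is to deduce this equivalence directly from Theorem \ref{theorem: transitive action of the regular covers} applied to the normal subgroup $\Gamma = \Delta(G) \unlhd \pi_1(G)$. Since $\mathcal{U}_G^{\square} = \mathcal{U}_G / \Delta(G)$ is a regular cover, that theorem gives a free action of $\pi_1(G)/\Delta(G) = \pi_1^{\square}(G)$ on $\mathcal{U}_G^{\square}$ by deck transformations of $\alpha^{\square}$, and this action is transitive on each fiber $(\alpha^{\square})^{-1}(b)$, $b \in V_G$.

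From freeness and transitivity of this action on the fiber $(\alpha^{\square})^{-1}(b)$, the map $g \mapsto \eta_g(\tilde b)$ for any chosen $\tilde b \in (\alpha^{\square})^{-1}(b)$ is a bijection from $\pi_1^{\square}(G)$ onto $(\alpha^{\square})^{-1}(b)$. In particular, the cardinality of the fiber equals $|\pi_1^{\square}(G)|$ (including when either side is infinite).

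Now I would observe that the vertex set of $\mathcal{U}_G^{\square}$ partitions as the disjoint union of these fibers over $b \in V_G$, so
\[|V_{\mathcal{U}_G^{\square}}| \;=\; \sum_{b \in V_G} |(\alpha^{\square})^{-1}(b)| \;=\; |V_G| \cdot |\pi_1^{\square}(G)|.\]
Since $G$ is assumed finite, $|V_G|$ is a positive finite integer, and hence $|V_{\mathcal{U}_G^{\square}}|$ is finite if and only if $|\pi_1^{\square}(G)|$ is finite. Finiteness of the graph $\mathcal{U}_G^{\square}$ is equivalent to finiteness of its vertex set (the graph being locally finite, since the covering map is a local isomorphism onto the finite graph $G$), which gives exactly the stated equivalence.

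There is no significant obstacle here: the entire content is bundled into the earlier theorem on regular covers, which already establishes the free transitive action of $\pi_1(G)/\Gamma$ on each fiber. The only thing to be careful about is to state the cardinality argument so that it is meaningful both when the cardinalities are finite and when they are infinite, which is automatic since a bijection preserves cardinality in both cases.
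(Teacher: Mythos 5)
Your proof is correct and is exactly the argument the paper intends: the paper declares the proposition ``immediate'' from Theorem~\ref{theorem: transitive action of the regular covers}, and you have simply spelled out why the free transitive action of $\pi_1^{\square}(G)$ on the fibers of $\alpha^{\square}$ makes each fiber a bijective copy of the square group, so that $|V_{\mathcal{U}_G^{\square}}| = |V_G|\cdot|\pi_1^{\square}(G)|$. No discrepancy with the paper's approach.
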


\subsubsection{Square lifting}

Let us see (Lemma \ref{lemma: lift of squares}) that the square cover of a graph $G$ is the largest cover in which a square of $G$ always has a lift which is a square. This is an important property for the remainder of the text and the root of the configuration lifting property which was instrumental in \cite{GHO23}. We prove a stronger version of this property in Section \ref{section:config.lifting}.

\begin{lemma}\label{lemma: automorphism taking to adjacent vertex}
    Let $G$ be a graph. Fix a vertex $p$ of $\mathcal U_G$. Consider a square $s$ which begins and end at $\alpha(p)$. There is a unique element $\tilde s\in \pi_1(G)$ such that 
    $p\star s ={\tilde s \star p
    }$. Furthermore, $\tilde s \in \Delta(G)$.
\end{lemma}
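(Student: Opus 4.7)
The plan is to produce $\tilde s$ explicitly by a direct algebraic manipulation with $\star$, then check the three claims (existence of the relation, uniqueness, and membership in $\Delta(G)$) from the definitions already given in the excerpt.

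First I would define $\tilde s \coloneqq (p\star s)\star p^{-1}$. This is well-formed: $p\star s$ is a non-backtracking walk from the base vertex $a$ of $\mathcal{U}_G$ to $\alpha(p)$, since $s$ is a cycle at $\alpha(p)$, and $p^{-1}$ runs from $\alpha(p)$ back to $a$. Hence $\tilde s$ is a non-backtracking cycle at $a$, i.e.\ an element of $\pi_1(G)[a]$.

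Next, to verify $\tilde s\star p = p\star s$, I would combine the associativity of $\star$ (noted in the paragraph defining $\star$) with the elementary identity $p^{-1}\star p = \varepsilon_{\alpha(p)}$. This identity is a short backtrack-by-backtrack reduction: the concatenation $p^{-1}\odot p$ is the walk $p_n p_{n-1}\cdots p_1 p_0 p_1 \cdots p_n$, and successive removals of the middle length-two cycles $p_{i-1}p_i p_{i-1}$ collapse it all the way to the length-zero cycle at $p_n=\alpha(p)$. Then
\[\tilde s\star p \;=\; ((p\star s)\star p^{-1})\star p \;=\; (p\star s)\star (p^{-1}\star p) \;=\; (p\star s)\star \varepsilon_{\alpha(p)} \;=\; p\star s.\]
For uniqueness, if $\tilde s'\star p = \tilde s\star p$ with $\tilde s'\in \pi_1(G)[a]$, right $\star$-multiplication by $p^{-1}$, together with the symmetric reduction $p\star p^{-1}=\varepsilon_a$, immediately yields $\tilde s' = \tilde s$.

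Finally, by construction $\tilde s = p\star s\star p^{-1}$ is literally one of the generators listed in the notation introducing $\Delta(G)$ (a square $s$ conjugated by a non-backtracking walk $p$ based at $a$), so $\tilde s\in \Delta(G)$. The only real subtlety is bookkeeping around $\star$ versus $\odot$: because $\star$ systematically applies the backtrack-removal map $\varphi$, one must confirm from the definition of $\varphi$ that $p^{-1}\star p = \varepsilon_{\alpha(p)}$, $p\star p^{-1}=\varepsilon_a$, and that $\varepsilon$ acts as the identity for $\star$. These are elementary inductions on $l(p)$, and no deeper obstacle is expected.
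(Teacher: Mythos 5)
Your proof is correct and takes essentially the same route as the paper's: define $\tilde s = p \star s \star p^{-1}$, note it is a generator of $\Delta(G)$ by definition, and cancel $p$ on the right for uniqueness. You supply some extra (correct) bookkeeping about $\varphi$ and the identities $p^{-1}\star p = \varepsilon_{\alpha(p)}$, $p\star p^{-1}=\varepsilon_a$ that the paper's terse proof leaves implicit.
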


\begin{proof}
The cycle $\tilde s=p\star s\star p^{-1}$
 satisfies the requirement. For uniqueness, if $\tilde s \star p = \tilde s' \star p$, then $\tilde s \star p \star p^{-1} = \tilde s' \star p \star p^{-1}$, and thus $\tilde s = \tilde s'$. By definition of $\Delta(G)$, we have $\tilde s \in \Delta(G)$. 
\end{proof}

\begin{remark}
    Lemma \ref{lemma: automorphism taking to adjacent vertex} holds when $s$ is a cycle of length four (not necessarily a square). 
\end{remark}

\begin{lemma}\label{lemma: lift of squares}
    Let $G$ be a graph and $\Gamma$ be a subgroup of $\pi_1(G)$. Set $\tilde G \coloneqq\mathcal U_G/\Gamma$. We have $\Delta(G)\subset \Gamma $ if and only if for every square $s$ in $G$
    and any lift $\tilde s_0$ in $\tilde G$ of $s_0$,  the lift $\tilde s$ in $\tilde G$ of $s$ starting at $\tilde s_0$ is a square. 
\end{lemma}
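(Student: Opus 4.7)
The plan is to identify the endpoint of the lift of $s$ in $\tilde G$ and reduce the statement to a membership condition in $\Gamma$. Fix a representative $p \in \mathcal U_G$ with $\Gamma(p) = \tilde s_0$, so that $p$ ends at $\alpha(p) = s_0$. Since $\mathcal U_G$ is a cover of $\tilde G$ via $w \mapsto \Gamma(w)$ and lifts are unique, the lift of $s$ starting at $\tilde s_0$ in $\tilde G$ is the projection of the unique lift of $s$ starting at $p$ in $\mathcal U_G$. A direct induction using the explicit description of neighborhoods in $\mathcal U_G$ (each step from a walk $w$ ending at $a$ toward $b \in N_G(a)$ goes to $\varphi(wb)$) shows that this lift in $\mathcal U_G$ ends at $\varphi(p \odot s) = p \star s$, so the lift in $\tilde G$ ends at $\Gamma(p \star s)$.

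By Lemma \ref{lemma: automorphism taking to adjacent vertex}, $p \star s = \tilde s \star p$ where $\tilde s := p \star s \star p^{-1} \in \Delta(G)$. Since the $\pi_1(G)$-action on $\mathcal U_G$ is free (Proposition \ref{prop:trans.deck}), the equality $\Gamma(\tilde s \star p) = \Gamma(p)$ is equivalent to the existence of $g \in \Gamma$ with $\tilde s \star p = g \star p$, hence to $\tilde s \in \Gamma$. Thus the lift in $\tilde G$ is a cycle iff $\tilde s \in \Gamma$. Because the covering map sends this lift to the non-backtracking walk $s$, the lift itself is automatically non-backtracking (any backtrack in $\tilde G$ would project to one in $G$), so being a cycle is equivalent to being a square, and the entire statement reduces to the condition $\tilde s \in \Gamma$.

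If $\Delta(G) \subset \Gamma$, then $\tilde s \in \Delta(G) \subset \Gamma$ for every square $s$ and every choice of representative $p$, so the lift is always a square. Conversely, $\Delta(G)$ is generated by the cycles $p \star s \star p^{-1}$ where $s$ ranges over squares of $G$ and $p$ over non-backtracking walks from the base vertex $a$ to $s_0$; applying the hypothesis to each such generator with the specific choice $\tilde s_0 := \Gamma(p)$ yields $p \star s \star p^{-1} \in \Gamma$, and since all generators of $\Delta(G)$ lie in $\Gamma$ we conclude $\Delta(G) \subset \Gamma$. The only mildly delicate point is the bookkeeping that identifies the endpoint of the lift in $\mathcal U_G$ with $p \star s$ and transfers the cycle condition through the quotient; both are handled cleanly by freeness of the $\pi_1(G)$-action.
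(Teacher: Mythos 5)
Your proof is correct and follows essentially the same structure as the paper's: identify the endpoint of the unique lift in $\mathcal U_G$ as $p\star s$, apply Lemma~\ref{lemma: automorphism taking to adjacent vertex} to write $p\star s = (p\star s\star p^{-1})\star p$, and reduce the cycle condition (equivalently the square condition, since lifts of non-backtracking walks are non-backtracking) to $p \star s \star p^{-1} \in \Gamma$. Where you depart, and in fact to advantage, is the converse direction: you apply the hypothesis at every lift $\Gamma(p)$ to place each generator $p \star s \star p^{-1}$ of $\Delta(G)$ directly into $\Gamma$, which gives $\Delta(G)\subset\Gamma$ simply because $\Gamma$ is a subgroup. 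The paper instead applies the hypothesis only to the lift $\Gamma(p_T(s'_0))$ to deduce $\Delta_T(s')\in\Gamma$, and then invokes Proposition~\ref{prop:snormsubgroup}; but that proposition characterizes $\Delta(G)$ as a \emph{normal} closure, so concluding $\Delta(G)\subset\Gamma$ from $\Delta_T(s')\in\Gamma$ would implicitly require $\Gamma$ to be normal, which is not among the hypotheses. Your version uses the full strength of the ``any lift'' quantifier and sidesteps this issue.
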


\begin{proof}Fix a base vertex $a$.

    $(\Rightarrow)$ Let us assume that $\Delta(G) \subset \Gamma$. Fix any square $s$ and vertex $\tilde s_0$ as in the lemma, and let $\tilde s$ be the unique lift of $s$ starting at $\tilde s_0$ (see Theorem~\ref{thm:subgroup-cover}). Let $w_0$ be a walk in $\mathcal{U}_G$ such that $\tilde s_0 = \Gamma(w_0)$ and set $w_4 = w_0 \star s$. By Lemma \ref{lemma: automorphism taking to adjacent vertex}, 
    there exists $s' \in \Delta(G)$ (which by assumption implies that $s' \in \Gamma$) 
    such that $w_4 = s' \star w_0$. Since lifts are unique, we thus have $\Gamma(w_4) = \tilde s_4$. This implies that $ \Gamma(w_0) = \Gamma(w_4)$. In turn, this means that the unique lift $\tilde s$ of $s$ in $\tilde G$ which begins at $\tilde s_0$ is a square. 
    
    $(\Leftarrow)$ Conversely, let us assume that for every square $s$ in $G$ and any lift $\tilde s_0 \in V_{\tilde G}$ of $s_0$ in $\tilde G$, there exists a lift $\tilde s$ of $s$ in $\tilde G$ starting at $\tilde s_0$ which is a square. In order to prove that $\Delta(G) \subset \Gamma$, 
    it is sufficient by Proposition \ref{prop:snormsubgroup} to prove that for all squares $s'$, $\Gamma$ contains the cycle $\Delta_T(s')$. By assumption, the unique lift of the square $s'$ in $\tilde G$ starting at $\Gamma(p_T(s'_0))$ is a square. Thus $\Gamma(p_T(s'_0)) = \Gamma(p_T(s'_0) \star s')$. Therefore there exists $g \in \Gamma$ such that $g \star p_T(s'_0) = p_T(s'_0) \star s'$. This implies that $g = \Delta_T(s')$, which gives $\Delta_T(s')\in \Gamma$.
\end{proof}

\subsubsection{Square equivalence}

In this section, we prove the configuration lifting property which, roughly speaking, states that configurations of a homshift $X_G^d$ can be lifted to configurations of the homshift $X_{\mathcal{U}_G^{\square}}^d$. This is implied by the stronger statement of Proposition \ref{proposition:config.lifting}. Let us first introduce some terminology.

\begin{definition}\label{def:diff_sq}
    We say that two non-backtracking walks $p,q$ on $G$ which start at the same vertex and end at the same vertex \textbf{differ by a square} when there exists a non-backtracking walk $w$ and a square $s$  in $G$ starting at $w_{l(w)}$ such that $p \star q^{-1} = w \star s \star w^{-1}$.
    \label{definition:differ.square}
\end{definition}

\begin{remark}
    Notice that when $p \star q^{-1} = w \star s \star w^{-1}$, we also have $q \star p^{-1} = w \star s^{-1} \star w^{-1}$. This makes differing by a square a symmetric relation.
\end{remark}

The next two lemmas imply that Definition \ref{definition:differ.square} is equivalent to the definition of differing by a square from \cite{GHO23}. We use the notion of the \emph{circular shift} \notationidx{$\omega(p)$}{circular shift of a cycle $p$}$= p_1 \ldots p_{l(p)} p_1$ of a cycle $p$.

\begin{lemma}
    For every square $s$ and non-backtracking walk $w$ such that $s_0 = w_{l(w)}$, there is a prefix $p$ of $w$ and an integer $i$ such that $w \star s \star w^{-1} = p \odot \omega^i(s) \odot p^{-1}$.\label{lemma:square.induction}
\end{lemma}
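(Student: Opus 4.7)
The plan is to proceed by induction on $n = l(w)$. The base case $n = 0$ is immediate: $w$ is the trivial walk at $s_0$, so $w \star s \star w^{-1} = s$, and one takes $p = w$, $i = 0$.

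For the inductive step with $n \geq 1$, I would first note that since $s$ is a square one has $s_1 \neq s_3$, so the vertex $w_{n-1}$ matches at most one of $s_1$ or $s_3$. If $w_{n-1} \notin \{s_1, s_3\}$, I would check directly that $w \odot s \odot w^{-1}$ has no backtrack: the only candidate triples not internal to the non-backtracking walks $w$, $s$, or $w^{-1}$ are the junction triples $(w_{n-1}, s_0, s_1)$, $(s_3, s_0, w_{n-1})$, and $(s_0, w_{n-1}, w_{n-2})$; the first two fail to backtrack by the case hypothesis, and the last because $w$ is non-backtracking (so $w_{n-2} \neq w_n = s_0$). Therefore $w \star s \star w^{-1} = w \odot s \odot w^{-1}$, and $p = w$, $i = 0$ works.

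If instead $w_{n-1} = s_1$, I would set $w' = w_0 \ldots w_{n-1}$ and $s' = \omega(s) = s_1 s_2 s_3 s_0 s_1$. Then $w'$ is a non-backtracking walk of length $n-1$, $s'$ is a square, and $s'_0 = s_1 = w'_{l(w')}$, so the inductive hypothesis applies to the pair $(w', s')$. The key step, which I expect to be the main technical obstacle, is verifying that removing the unique junction backtrack $s_1 s_0 s_1$ from $w \odot s \odot w^{-1}$ produces exactly $w' \odot s' \odot (w')^{-1}$; this is a direct but slightly laborious substitution using $w_{n-1} = s_1$ and $w_n = s_0 = s_4$. Granting this identity, $w \star s \star w^{-1} = w' \star s' \star (w')^{-1}$, and the inductive hypothesis provides a prefix $p$ of $w'$ (hence of $w$) and an integer $j$ with $w' \star s' \star (w')^{-1} = p \odot \omega^j(s') \odot p^{-1} = p \odot \omega^{j+1}(s) \odot p^{-1}$; I conclude with $i = j + 1$.

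The symmetric case $w_{n-1} = s_3$ would be handled analogously using $s'' = \omega^3(s) = s_3 s_0 s_1 s_2 s_3$ and removing the opposite-junction backtrack $s_3 s_0 s_3$, obtaining $i = j + 3$. The whole argument is a formal check that a single backtrack removal at one junction rotates the square by one step; once the reduction identity is verified in each inductive case, the recursion runs transparently.
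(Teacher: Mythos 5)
Your proof is correct and follows the same approach as the paper: induction on $l(w)$, deleting the unique junction backtrack in $w \odot s \odot w^{-1}$ (if there is one), which has the effect of rotating the square by one step. Your version is more complete than the paper's sketch: you state the base case, verify explicitly that the walk is already reduced when neither junction collapses, and treat both junction cases $w_{l(w)-1} = s_1$ and $w_{l(w)-1} = s_3$ — the paper's proof only mentions the first of these two (the argument being symmetric, but the omission is a real gap in the written text, so your extra care is warranted).
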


\begin{proof}
    If $w \odot s \odot w^{-1}$ is non-backtracking, the statement is immediately satisfied. Otherwise, because both $s$ and $w$ are non-backtracking, we have
    $s_1 = w_{l(w)-1}$.
    Then $w\star s \star w^{-1} = w'\star \omega(s) \star w'^{-1}$, where $w' = w_0 \ldots w_{l(w)-1}$ (by deleting the backtrack corresponding to indices $l(w)-1,l(w),l(w)+1$). The statement is obtained by applying this transformation inductively, since $w$ is a finite word.
\end{proof}

For a walk $p$, a cycle $c$ and $k \le l(p)$ such that $p_k=c_0$, denote by $p \oplus_k c$ the walk $p_{\llbracket 0 , k\rrbracket} \odot c \odot p_{\llbracket k , l(p)\rrbracket}$.
Recall that for every walk $p$, $\varphi(p)$ is the walk obtained from $p$ by replacing successively each backtrack $aba$ by $a$.

\begin{lemma}
Two non-backtracking walks $p$ and $q$ on a graph $G$ differ by a square if and only there exist some square $s$ and an integer $k$ such that $p = \varphi(q \oplus_k s)$ or $q = \varphi(p \oplus_k s)$.
    \label{definition:differ by a square}
\end{lemma}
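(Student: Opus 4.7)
The plan is to prove each direction separately, using Lemma~\ref{lemma:square.induction} as the main tool and analyzing backtrack cancellations explicitly.

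For the easier backward direction, starting from $p = \varphi(q \oplus_k s)$, I would compute $p \star q^{-1} = \varphi\bigl((q \oplus_k s) \odot q^{-1}\bigr)$. The tail concatenation $q_{\llbracket k, l(q)\rrbracket} \odot q^{-1}$ undergoes a cascade of backtracks that collapses it entirely to $q_{\llbracket k, 0\rrbracket}$, so $(q \oplus_k s) \odot q^{-1}$ reduces to $q_{\llbracket 0,k\rrbracket} \odot s \odot q_{\llbracket k, 0 \rrbracket}$. This walk is already of the form $w \odot s \odot w^{-1}$ with $w := q_{\llbracket 0,k\rrbracket}$, giving $p \star q^{-1} = w \star s \star w^{-1}$ and so $p$ and $q$ differ by a square. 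The symmetric case $q = \varphi(p \oplus_k s)$ is handled identically.

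For the forward direction, the first step is to apply Lemma~\ref{lemma:square.induction} to $p \star q^{-1} = w \star s \star w^{-1}$ so as to assume the walk $W := w \odot s \odot w^{-1}$ is non-backtracking (after replacing $w$ by a prefix and $s$ by a suitable circular shift, still a square); set $m := l(w)$. Since $p$ and $q$ are themselves non-backtracking, all cancellations in $p \odot q^{-1}$ happen at the junction, coming from a common suffix of $p, q$ of some length $c$; set $\tau := l(p) - c$, the index in $W$ where $p$'s contribution ends. A case split on $\tau$ then determines which walk to modify: if $\tau \leq m + 4$, the plan is to insert $s$ at position $m$ in $q$ (one verifies in every subcase that $q_m = w_m = s_0$, so the insertion is well-defined); if $\tau > m + 4$, I would swap the roles of $p$ and $q$, so that $q \star p^{-1} = w \odot s^{-1} \odot w^{-1}$ has corresponding junction index $\tau^* = 2m + 4 - \tau < m$, and apply the first case to obtain $q = \varphi(p \oplus_m s^{-1})$.

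The core calculation — and the main obstacle — is verifying that $\varphi(q \oplus_m s) = p$ in the first case. The inserted walk $q \oplus_m s$ contains the substring $w_m, s_1, s_2, s_3, w_m$ immediately followed by the initial portion of $q_{\llbracket m, l(q) \rrbracket}$, which begins with the appropriate truncation of $s_3, s_2, s_1, w_m, w_{m-1}, \ldots$ depending on whether $\tau$ equals $m, m+1, m+2, m+3$ or $m+4$ (or is smaller). A cascade of backtracks $(s_3, w_m, s_3), (s_2, s_3, s_2), (s_1, s_2, s_1), (w_m, s_1, w_m)$ collapses the square-in-square substring, and when $\tau < m$ additional cascading $(w_{j-1}, w_j, w_{j-1})$ cancellations continue down to position $\tau$. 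The non-backtracking property of $p$ and $W$ forces $p_{\tau \pm 1}$ to avoid the values that would create spurious boundary cancellations, so the final reduced walk is precisely $p$.
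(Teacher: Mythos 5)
Your proposal is correct and follows essentially the same approach as the paper's proof: both directions hinge on first reducing $p\star q^{-1}=w\star s\star w^{-1}$ to a non-backtracking walk $W=w'\odot\omega^i(s)\odot w'^{-1}$ via Lemma~\ref{lemma:square.induction}, and then splitting into two cases. Your case split on $\tau\leq m+4$ versus $\tau>m+4$ is exactly equivalent to the paper's split on whether the maximal common prefix of $w'$ and $q$ has length $l(w')$ or less (one can check the common prefix has length $\min(l(w'),\,2l(w')+4-\tau)$), and the cascade-of-backtracks computation you outline, including the check that non-backtracking of $p$ and $W$ prevents the cascade from overshooting $p_\tau$, is the same reduction the paper compresses into its displayed $\star$-identities.
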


\begin{proof}
$(\Rightarrow)$ Consider $p,q$ non-backtracking walks such that there exists a square $s$ and a non-backtracking walk $w$ such that $p \star q ^{-1} = w \star s \star w^{-1}$. This implies that $p$, $q$ and $w$ start at the same vertex. We thus have 
\begin{equation}
    p = \varphi(p) = w \star s \star w^{-1}\star q.\label{eq:pwsw}
\end{equation} By Lemma \ref{lemma:square.induction}, there exists a prefix $w'$ of $w$ and an integer $i$ such that $w \star s \star w^{-1} = w' \odot \omega^i(s) \odot {w'}^{-1}$. Set $l\coloneqq\min(l(w'),l(q))$, and  $\lambda \coloneqq l\left({w'}^{-1}_{\llbracket 0,l\rrbracket}\star q_{\llbracket 0,l\rrbracket}\right)/2$. In other words, $l-\lambda$ is the length of the largest common prefix of $w'$ and $q$. Therefore Equation~(\ref{eq:pwsw}) becomes 
\[p = q_{\llbracket 0,l - \lambda \rrbracket} \star  \left( {w'}_{\llbracket l - \lambda, l(w')\rrbracket} \odot \omega^i(s) \odot {w'}_{\llbracket l - \lambda, l(w')\rrbracket}^{-1}\right) \star q_{\llbracket l - \lambda, l(q)\rrbracket}.\]

 On the one hand, if $l-\lambda = l(w')$, 
$p = q_{\llbracket 0,l - \lambda \rrbracket} \star \omega^i(s) \star q_{\llbracket l - \lambda, l(q)\rrbracket}$,
which can be rewritten as $p = \varphi(q \oplus_{l-\lambda} \omega^i(s))$.
On the other hand, if $l-\lambda \neq l(w')$, this implies that 
\[p = q_{\llbracket 0,l - \lambda \rrbracket} \odot  \left( {w'}_{\llbracket l - \lambda, l(w')\rrbracket}  \odot \omega^i(s)\odot {w'}_{\llbracket l - \lambda, l(w')\rrbracket}^{-1}\right) \star q_{\llbracket l - \lambda, l(q)\rrbracket}\]
so that $q = \varphi(p\oplus_{l(w')} {\omega^i(s)}^{-1})$.

$(\Leftarrow)$ 
Let us consider $p,q$ non-backtracking walks such that $q = \varphi(p\oplus_k s)$ for some integer $k$ and a square $s$ (the other case is similar). Then there exists a non-backtracking walk $w$ and an integer $l$ such that \[p = q_{\llbracket 0,l\rrbracket} \odot w \odot s^{-1} \odot w^{-1}\odot q_{\llbracket l,l(q)\rrbracket}.\]
This implies that 
\[p = w' \star s^{-1} \star (w')^{-1} \star q,\]
where $w'\coloneqq q_{\llbracket 0,l\rrbracket} \odot w$.
\end{proof}

Figure~\ref{figure.differ} illustrates the types of situations where two walks differ by a square.
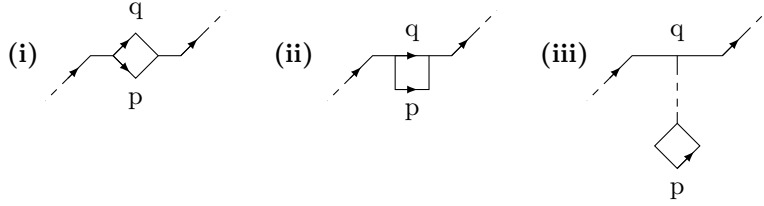
\begin{figure}[ht!]

\begin{center}
\begin{tikzpicture}[scale=0.3]
\draw (0,0) -- (1,1) -- (2,1) -- (3,2) -- (4,1) -- (5,1) -- (6,2);
\draw[-latex] (0,0) -- (0.5,0.5);
\draw[-latex] (5,1) -- (5.75,1.75);
\draw[-latex] (2,1) -- (2.75,1.75);
\draw[-latex] (2,1) -- (2.75,0.25);

\draw (2,1) -- (3,0) -- (4,1);
\draw[dashed] (6,2) -- (7,3);
\draw[dashed] (0,0) -- (-1,-1);
\node at (3,-1) {p};
\node at (3,3) {q};
\node at (-2,1) {\textbf{(i)}};

\begin{scope}[xshift=12cm]
\draw (0.5,0) -- (1.5,1) -- (2.5,1) -- (4,1) -- (5,1) -- (6,2);
\draw[-latex] (0.5,0) -- (1,0.5);
\draw[-latex] (2.5,1) -- (3.5,1);
\draw[-latex] (2.5,-0.5) -- (3.5,-0.5);
\draw[-latex] (5,1) -- (5.75,1.75);
\draw (2.5,1) -- (2.5,-0.5) -- (4,-0.5) -- (4,1);
\draw[dashed] (6,2) -- (7,3);
\draw[dashed] (0.5,0) -- (-0.5,-1);
\node at (3.25,-1.5) {p};
\node at (3.25,2) {q};
\node at (-2,1) {\textbf{(ii)}};
\end{scope}

\begin{scope}[xshift=24cm]
\draw (0,0) -- (1,1) -- (2,1) -- (4,1) -- (5,1) -- (6,2);
\draw[-latex] (0,0) -- (0.5,0.5);
\draw[-latex] (5,1) -- (5.75,1.75);
\begin{scope}[yshift=-3cm]
\draw (3,1) -- (2,0) -- (3,-1) -- (4,0) -- (3,1);
\draw[-latex] (3,-1) -- (3.75,-0.25);
\end{scope}
\draw (3,1) -- (3,0.5);
\draw[dashed] (3,0.5) -- (3,-1.5);
\draw (3,-1.5) -- (3,-2);
\draw[dashed] (6,2) -- (7,3);
\draw[dashed] (0,0) -- (-1,-1);
\node at (3,-5) {p};
\node at (3,2) {q};
\node at (-2,1) {\textbf{(iii)}};
\end{scope}
\end{tikzpicture}
\end{center}
\caption{Partial representation of two walks $p,q$ which differ by a square.\label{figure.differ}}
\end{figure}

Equation \eqref{eq:pwsw} motivates the following definition.

\begin{definition}
    We say that two non-backtracking walks $p,q$ on a graph $G$ both starting at some vertex $a$ and ending at the same vertex are \textbf{square-equivalent} when $p \star q^{-1} \in \Delta(G)[a]$. In other terms, $p$ and $q$ have the same orbit via the action of $\Delta(G)[a]$.  We denote this by \notationidx{$p \sim_{\square} q$}{$p,q$ are square-equivalent}. 
\end{definition}
Clearly the relation $\sim_{\square}$ is an equivalence relation. The following proposition is an immediate consequence of this definition.

\begin{proposition}\label{corollary.def.square-cover}
The square cover $\mathcal U^{\square}_{G}[a]$ is isomorphic to the quotient of $\mathcal U_G[a]$ by the equivalence relation $\sim_\square$.
\end{proposition}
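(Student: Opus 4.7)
The plan is to verify that the equivalence relation on $V_{\mathcal{U}_G[a]}$ induced by the $\Delta(G)[a]$-action $(\gamma, w) \mapsto \gamma \star w$ coincides exactly with $\sim_{\square}$; once this is established, the graph isomorphism follows because both quotients carry the natural induced edge structure (an edge exists between two classes iff some pair of representatives is adjacent in $\mathcal U_G[a]$), and the identity map on vertices descends to a bijection preserving this structure.

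First I would observe that both relations can only be non-trivial between walks with the same terminal vertex. For $\sim_{\square}$ this is part of the definition, while for the orbit relation it holds because any $\gamma \in \Delta(G)[a]$ is a cycle based at $a$, so the last vertex of $\gamma \star w$ agrees with $\alpha(w)$. One may therefore fix $b \in V_G$ and work with non-backtracking walks $p, q$ from $a$ to $b$.

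Next I would establish the two implications by a direct manipulation using associativity of $\star$. If $p$ and $q$ lie in the same orbit, write $p = \gamma \star q$ with $\gamma \in \Delta(G)[a]$; then
\[ p \star q^{-1} = (\gamma \star q) \star q^{-1} = \gamma \star (q \star q^{-1}) = \gamma \in \Delta(G)[a], \]
since $q \star q^{-1}$ is the empty cycle at $a$. Conversely, if $p \sim_{\square} q$, setting $\gamma \coloneqq p \star q^{-1} \in \Delta(G)[a]$ yields
\[ \gamma \star q = (p \star q^{-1}) \star q = p \star (q^{-1} \star q) = p, \]
so $p$ and $q$ lie in the same $\Delta(G)[a]$-orbit. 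Hence the two equivalence relations coincide on $V_{\mathcal U_G[a]}$.

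Finally, the graph $\mathcal U^{\square}_G[a] = \mathcal U_G[a]/\Delta(G)[a]$ is a quotient graph in the sense of Definition of the quotient graph from Section \ref{sec:universal.covers}, whose edges are precisely $\{(\Delta(w), \Delta(w')) : \exists\, \gamma,\gamma' \in \Delta(G)[a],\ (\gamma \star w, \gamma' \star w') \in E_{\mathcal U_G[a]}\}$. Since the equivalence classes of $\sim_{\square}$ are the same as the $\Delta(G)[a]$-orbits, the standard quotient graph on $\mathcal U_G[a]/\!\sim_{\square}$ has exactly the same vertices and the same edges, yielding the claimed graph isomorphism. No real obstacle arises: the argument is purely a definitional unfolding leveraging associativity of $\star$ and the fact that $q \star q^{-1}$ is trivial.
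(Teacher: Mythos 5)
Your proof is correct and follows the same approach the paper intends: the paper dismisses this as an "immediate consequence" because the definition of $\sim_\square$ already remarks that two walks are square-equivalent iff they share a $\Delta(G)[a]$-orbit, so the two quotient graphs are literally the same object. Your argument simply fills in that remark, using associativity of $\star$ and the fact that $q\star q^{-1}$ and $q^{-1}\star q$ are trivial, and then notes that the quotient graph by an orbit equivalence and the quotient graph by $\sim_\square$ are constructed by the same rule on vertices and edges.
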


Recall that this quotient is defined as follows: vertices are the equivalence classes of $\mathcal U_G[a]$ for $\sim_{\square}$; there is an edge between $c$ and $c'$ if and only if there are $p \in c$ and $p' \in c'$ such that $p$ and $p'$ are neighbors in $\mathcal U_G[a]$.

The following proposition, together with Lemma \ref{definition:differ by a square}, implies that definition of the square equivalence relation is equivalent to the definition of square equivalence written in \cite{GHO23}, and subsequently, as consequence of Proposition \ref{corollary.def.square-cover}, the same goes for the definition of square cover. 

\begin{proposition}
Two non-backtracking walks $p,q$ starting at some vertex $a$ in $G$ are square-equivalent if and only if there exists a sequence $p_0 , \ldots , p_k$ of non-backtracking walks such that for all $i$, $p_i$ and $p_{i+1}$ differ by a square, $p_0 = p$ and $p_k = q$.
    \label{proposition: square equivalent}
\end{proposition}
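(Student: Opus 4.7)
My plan is to translate both sides of the equivalence into statements about the group $\pi_1(G)$, where they become: on the right, that $p \star q^{-1}$ can be built up one generator of $\Delta(G)$ at a time; on the left, that $p \star q^{-1}$ lies in $\Delta(G)$, i.e. is a $\star$-product of generators. The equivalence is then essentially a telescoping identity.

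\textbf{Backward direction.} Suppose there is a chain $p = p_0, p_1, \ldots, p_k = q$ of non-backtracking walks in which consecutive ones differ by a square. Then by Definition~\ref{def:diff_sq}, for each $i$ we have $p_i \star p_{i+1}^{-1} = w_i \star s_i \star w_i^{-1}$ for some non-backtracking walk $w_i$ and square $s_i$, which is a generator of $\Delta(G)$ in the sense of Remark~\ref{remark: cycle in delta group}. Telescoping,
\[
p \star q^{-1} \;=\; (p_0 \star p_1^{-1}) \star (p_1 \star p_2^{-1}) \star \cdots \star (p_{k-1} \star p_k^{-1})
\]
expresses $p \star q^{-1}$ as a $\star$-product of elements of $\Delta(G)$, hence it lies in $\Delta(G)[a]$ and $p \sim_\square q$.

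\textbf{Forward direction.} Assume $p \sim_\square q$, so $p \star q^{-1} \in \Delta(G)[a]$. By Remark~\ref{remark: cycle in delta group} we may write
\[
p \star q^{-1} = c_1 \star c_2 \star \cdots \star c_n, \qquad c_i = w_i \star s_i \star w_i^{-1},
\]
with each $w_i$ a non-backtracking walk at $a$ and each $s_i$ a square. Define, for $0 \le i \le n$,
\[
p_i \coloneqq (c_1 \star \cdots \star c_i)^{-1} \star p,
\]
with the empty product interpreted as the trivial cycle $\varepsilon$ at $a$. Then $p_0 = p$ and $p_n = (p \star q^{-1})^{-1} \star p = q \star p^{-1} \star p = q$. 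Each $p_i$ is non-backtracking (since $\star$ produces non-backtracking walks), starts at $a$, and ends at the endpoint of $p$ because $(c_1 \star \cdots \star c_i)^{-1}$ is a cycle based at $a$. A direct computation using associativity of $\star$ yields
\[
p_i \star p_{i+1}^{-1} = (c_1 \star \cdots \star c_i)^{-1} \star p \star p^{-1} \star (c_1 \star \cdots \star c_{i+1}) = c_{i+1} = w_{i+1} \star s_{i+1} \star w_{i+1}^{-1},
\]
so $p_i$ and $p_{i+1}$ differ by a square by Definition~\ref{def:diff_sq}. The chain $p_0, p_1, \ldots, p_n$ is then the required sequence.

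\textbf{Expected obstacle.} The arguments themselves are purely formal manipulations in $\pi_1(G)$; the only point that needs mild care is checking that the constructed intermediate walks $p_i$ have matching endpoints (so that consecutive differences $p_i \star p_{i+1}^{-1}$ are genuine cycles at $a$) and that the identification between the generator form $w \odot s \odot w^{-1}$ in Remark~\ref{remark: cycle in delta group} and its non-backtracking representative $w \star s \star w^{-1}$ is harmless, which is immediate from $\star = \varphi \circ \odot$ and the fact that elements of $\pi_1(G)$ are non-backtracking.
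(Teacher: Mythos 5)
Your proof is correct and takes essentially the same approach as the paper: decompose $p\star q^{-1}$ into a $\star$-product of generators $c_i = w_i\star s_i\star w_i^{-1}$ of $\Delta(G)[a]$, define intermediate walks as partial products applied to $p$, and observe that consecutive walks differ by exactly one generator, hence by a square; the backward direction is the telescoping identity. The paper packs both directions into a single chain of equivalences, while you separate the two directions and verify the telescoping and endpoint bookkeeping more explicitly, but the decomposition and the key idea are the same.
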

\begin{proof} Consider two walks $p,q$. By definition of $\Delta(G)[a]$, we have $p\star q^{-1} \in \Delta(G)[a]$ if and only if $p\star q^{-1} = \overline{s_1}\star\cdots \star\overline{s_n}$, for some $n\in\N$, where for all $i$, $\overline{s_i} = w_i\star s_i \star w_i^{-1}\in \Delta(G)[a]$ for some square $s_i$ and some non-backtracking walk $w_i$ starting at $a$. This is equivalent to the existence of two sequences $(p_i)^n_{i=0}$ and $(\overline{s_i})^n_{i=0}$ such that $p_0 = p$, $p_{i+1} = (\overline{s_i})^{-1}\star p_i  $ for all $i$, and $p_n = q$, which is equivalent to the existence of a sequence $(p_i)^n_{i=0}$ such that $p = p_0$, $q = p_n$, and for all $i$, $p_{i+1}$ and $p_i$ differ by a square.
\end{proof}

\subsubsection{Configuration lifting\label{section:config.lifting}}

\begin{proposition}\label{proposition:config.lifting}
Let $G$ be a graph and $\Gamma\subset \pi_1(G)$ be a subgroup. Set $\tilde G \coloneqq\mathcal U_G/\Gamma$ and let $\theta: \tilde G\to G$ be a covering map.
Then $\Delta(G)\subset \Gamma$ if and only if, for all $x\in X^d_{G}$ and all lift $w$ of $x_{\boldsymbol{0}}$ to $\tilde G$, there exists a lift $\tilde x\in X^d_{\tilde G}$ of $x$ for $\theta$ such that $\tilde x_{\boldsymbol{0}} = w$.
\end{proposition}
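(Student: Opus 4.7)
The plan is to prove each direction separately, relying on Lemma \ref{lemma: lift of squares} for $(\Leftarrow)$ and on a path-lifting construction for $(\Rightarrow)$. Both arguments exploit the fact that every cycle in the grid graph $\Z^d$ (for $d \geq 2$) is square-decomposable, i.e. $\pi_1(\Z^d) = \Delta(\Z^d)$, which is immediate from the planar decomposition of grid cycles into unit squares.

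\textbf{Direction $(\Leftarrow)$.} To show $\Delta(G) \subset \Gamma$, by Lemma \ref{lemma: lift of squares} it suffices to exhibit, for every square $s = s_0 s_1 s_2 s_3 s_0$ in $G$ and every lift $\tilde s_0$ of $s_0$, a square in $\tilde G$ starting at $\tilde s_0$ that projects to $s$. I build a periodic configuration $x \in X^d_G$ realizing $s$ at the origin: setting $a(\boldsymbol{u}) \coloneqq (u_1 + u_3 + \dots + u_d) \bmod 2$ and $b(\boldsymbol{u}) \coloneqq u_2 \bmod 2$, I define $x_{\boldsymbol{u}}$ to be $s_0, s_1, s_2, s_3$ according to whether $(a(\boldsymbol u),b(\boldsymbol u))$ equals $(0,0),(1,0),(1,1),(0,1)$ respectively. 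One checks that moving by $e_1$ or $e_k$ ($k \geq 3$) flips $a$ (producing edges $s_0 s_1$ or $s_3 s_2$ of the square), and moving by $e_2$ flips $b$ (producing edges $s_0 s_3$ or $s_1 s_2$), so $x \in X^d_G$. Applying the hypothesis yields a lift $\tilde x$ with $\tilde x_{\boldsymbol{0}} = \tilde s_0$; the four vertices at $(0,0,\dots),(1,0,\dots),(1,1,\dots),(0,1,\dots)$ form a closed walk of length $4$ projecting to $s$, and since lifts of non-backtracking walks through a covering map are themselves non-backtracking (by uniqueness of one-step lifts), this closed walk is a square in $\tilde G$ as required.

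\textbf{Direction $(\Rightarrow)$.} Assume $\Delta(G) \subset \Gamma$ and fix $x$ and $w$. I define $\tilde x_{\boldsymbol{u}}$ by picking any walk $p$ in $\Z^d$ from $\boldsymbol{0}$ to $\boldsymbol{u}$, lifting the image walk $x_p$ in $G$ to $\tilde G$ starting at $w$ (Proposition \ref{prop: path lifting}), and setting $\tilde x_{\boldsymbol{u}}$ to be its endpoint. For well-definedness, given another walk $q$ from $\boldsymbol 0$ to $\boldsymbol u$, the two lifts share an endpoint iff the closed walk $x_p \odot x_q^{-1}$ lifts to a closed walk in $\tilde G$, which by the identification $\pi_1(\tilde G) \cong \Gamma$ (Theorem \ref{thm:subgroup-cover}, Remark \ref{theorem: covering maps and subgroups}) is equivalent to $\varphi(x_p \odot x_q^{-1}) \in \Gamma$. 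Now $p \star q^{-1} \in \pi_1(\Z^d) = \Delta(\Z^d)$ decomposes as a product $\prod_i r_i \star \sigma_i \star r_i^{-1}$ of conjugates of unit $\Z^d$-squares $\sigma_i$; pushing forward through $x$ and reducing via $\varphi$, each $x_{\sigma_i}$ is either a square of $G$ (if $x$ does not identify opposite corners) or reduces to the trivial cycle. By normality of $\Delta(G)$ in $\pi_1(G)$ (Proposition \ref{prop:snormsubgroup}), the product lands in $\Delta(G) \subset \Gamma$, proving the independence from $p$. Edge consistency of $\tilde x$ is then obtained by taking $q$ to be a one-step extension of $p$, and $\theta(\tilde x_{\boldsymbol u}) = x_{\boldsymbol u}$ follows from the definition of a lift.

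\textbf{Main obstacle.} The delicate point is bookkeeping around degenerate squares: when $x$ collapses a unit $\Z^d$-square to a backtracking $4$-walk in $G$, the corresponding factor becomes trivial under $\varphi$, and one must carefully invoke normality to keep the total product inside $\Delta(G)$ rather than only its generating set. A related subtlety, already present in $(\Leftarrow)$, is ensuring that a configuration containing the prescribed square $s$ exists in every dimension $d \geq 2$ without needing self-loops in $G$; the parity trick involving $u_1 + u_3 + \dots + u_d$ is designed precisely so that higher-dimensional neighbours behave like first-coordinate neighbours and thus force only edges of $s$.
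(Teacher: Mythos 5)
Your proof is correct, and the overall strategy (walk-lifting to define $\tilde x$ for $(\Rightarrow)$, square-decomposability of cycles in $\Z^d$ for well-definedness, and constructing a configuration that realizes a given square for $(\Leftarrow)$) is the same as the paper's. The differences are local but genuine. For $(\Leftarrow)$ you argue directly: you exhibit an explicit periodic configuration whose values on a unit square of $\Z^d$ realize $s$, apply the hypothesis, and read off a square lift, then invoke Lemma~\ref{lemma: lift of squares}; the paper argues by contraposition, starting from a square $s$ that fails to lift to a square and exhibiting (the same sort of) periodic configuration that therefore admits no lift based at $\tilde s_0$. These are two sides of the same coin. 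For $(\Rightarrow)$ your well-definedness argument pushes the decomposition of $p\star q^{-1}$ into conjugates of unit $\Z^d$-squares directly through $x$ and lands in $\Delta(G)$; the paper instead factors through its square-equivalence machinery (Lemma~\ref{definition:differ by a square} together with Lemma~\ref{lemma: lift of squares}), showing that lifts of walks differing by a square again differ by a square. Your version is slightly more self-contained; the paper's is more modular given the apparatus it has already built.

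One small imprecision worth flagging: when you assert that the lift of the closed walk $x_p\odot x_q^{-1}$ based at $w$ is closed iff $\varphi(x_p\odot x_q^{-1})\in\Gamma$, this is only literally correct when $w$ corresponds to the trivial coset $\Gamma(\varepsilon)$; in general the criterion is $w_0\star\varphi(x_p\odot x_q^{-1})\star w_0^{-1}\in\Gamma$ where $w=\Gamma(w_0)$. Your argument is nevertheless sound because you show the cycle lies in $\Delta(G)$, which is normal in $\pi_1(G)$ (Proposition~\ref{prop:snormsubgroup}) and contained in $\Gamma$, so the $w_0$-conjugate is automatically in $\Gamma$ as well. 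This is the role normality actually plays here; by contrast, absorbing the degenerate (collapsed) $\Z^d$-squares only needs $\Delta(G)$ to be a subgroup containing the identity, not its normality. It would sharpen the write-up to separate those two uses.
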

\begin{proof}
Denote by $\tilde \theta: \mathcal U_G\to \tilde G$ the quotient map and observe that $\alpha=\theta\circ\tilde \theta$,
where $\alpha$ is the covering map for the universal cover $\mathcal U_G$ of $G$.

$(\Rightarrow)$ 
Assume that $\Delta(G) \subset \Gamma$.  Fix $x\in X^d_{G}$. We want to find $\tilde x\in X^d_{\tilde G}$ such that {$\theta(\tilde x_{\boldsymbol{u}}) =  x_{\boldsymbol{u}}$  for every $\boldsymbol{u} \in \mathbb{Z}^d$.} 
\paragraph{Definition of the lift.} Set $b \coloneqq x_{\boldsymbol 0}$ and choose $\tilde b\in V_{\tilde G}$ such that $\alpha(\tilde b)=b$. Now define $\tilde x \in (V_{\tilde G})^{\Z^d}$ in the following way. Fix some $\boldsymbol m \in \Z^d$. Choose any walk $p$ from $\boldsymbol{0}$ to $\boldsymbol{m}$ in $\Z^d$.
By the walk lifting property, the walk $x_{p}$ admits a unique lift $\tilde b_0 \tilde b_1 \ldots \tilde b_k$ in the graph $\tilde G$ such that $\tilde b_0 = \tilde b$. We set $\tilde x_{\boldsymbol{m}}  \coloneqq \tilde b_k$.
\paragraph{Independence from the choice of $p$.} Let us prove that this definition does not depend on the choice of walk from $\boldsymbol{0}$ to $\boldsymbol{m}$. Consider two such walks $p$ and $p'$. Since they are square-equivalent in $\Z^d$, $x_{p}$ and $x_{p'}$ are also square-equivalent in $G$. Furthermore, by Lemma \ref{definition:differ by a square} and Lemma \ref{lemma: lift of squares}, the lifts of two non-backtracking walks which differ by a square also differ by a square, therefore the lifts of non-backtracking square-equivalent walks are square-equivalent. Since by definition two square-equivalent walks end at the same vertex, the definition of $\tilde x_{\boldsymbol{m}}$ does not depend on the choice of the walk $p$.

\paragraph{$\boldsymbol{\tilde x \in X^d_{\tilde G}}.$} Indeed, for $\boldsymbol{m}$ and $\boldsymbol{m}'$ which are neighbors in $\mathbb{Z}^d$, there exist two non-backtracking walks $p$ and $p'$ in $\mathbb{Z}^d$ starting at $\boldsymbol{0}$ and ending respectively at $\boldsymbol{m}$ and $\boldsymbol{m}'$ such that one is a prefix of the other. This implies that one of the lifts of $x_p$ and $x_{p'}$ is prefix of the other, which implies that $\tilde x_{\boldsymbol{m}}$ and $\tilde x_{\boldsymbol{m}'}$ are neighbors in $\tilde G$. 

\paragraph{$(\Leftarrow)$} Let us prove this by contraposition. Assume that $\Delta(G)\setminus \Gamma\neq\emptyset$. 

 By Lemma \ref{lemma: lift of squares}, there is a square $s$ in $G$ and $\tilde s_0\in V_{\tilde G}$ such that $\theta(\tilde s_0)=s_0$ and the lift of $s$ in $\tilde G$ starting at $\tilde s_0$ is not a square.  
 The configuration $x\in X^2_G$ presented on Figure \ref{figure:config.spec} cannot have a lift whose value at $\boldsymbol{0}$ is $\tilde s_0$, for otherwise $s$ would have a lift starting at $\tilde s_0$ which is a square. To obtain such a configuration in $X_G^d$, $d>2$, consider $x'\in X_G^d$ whose restriction to the first two dimensions is equal to $x$ and is constant along all the other dimensions.
 \begin{figure}
 \begin{center}
            \begin{tikzpicture}[scale=0.35]
        \fill[gray!5] (-1,-1) rectangle (9,9);
        \fill[gray!50] (3,3) rectangle (5,5);
        \fill[gray!30] (0,8) rectangle (1,7);
        \fill[gray!30] (2,6) rectangle (1,7);
        \fill[gray!30] (2,6) rectangle (3,5);
        \fill[gray!30] (5,5) rectangle (6,6);
        \fill[gray!30] (7,7) rectangle (6,6);
        \fill[gray!30] (7,7) rectangle (8,8);
        \fill[gray!30] (5,3) rectangle (6,2);
        \fill[gray!30] (6,2) rectangle (7,1);
        \fill[gray!30] (7,1) rectangle (8,0);
        \fill[gray!30] (0,0) rectangle (1,1);
        \fill[gray!30] (2,2) rectangle (1,1);
        \fill[gray!30] (2,2) rectangle (3,3);
        \draw (0,0) grid (8,8);
        \node at (4,9.5) {$\vdots$};
        \node at (4,-.5) {$\vdots$};
        \node at (-1,4) {$\hdots$};
        \node at (9,4) {$\hdots$};
        \node[scale=0.8] at (0.5,1.5) {$s_0$};
        \node[scale=0.8] at (0.5,3.5) {$s_0$};
        \node[scale=0.8] at (0.5,5.5) {$s_0$};
        \node[scale=0.8] at (1.5,2.5) {$s_0$};
        \node[scale=0.8] at (1.5,4.5) {$s_0$};
        \node[scale=0.8] at (2.5,3.5) {$s_0$};
        \node[scale=0.8] at (0.5,2.5) {$s_3$};
        \node[scale=0.8] at (0.5,4.5) {$s_3$};
        \node[scale=0.8] at (0.5,6.5) {$s_3$};
        \node[scale=0.8] at (1.5,3.5) {$s_3$};
        \node[scale=0.8] at (1.5,5.5) {$s_3$};
        \node[scale=0.8] at (2.5,4.5) {$s_3$};

        \node[scale=0.8] at (1.5,0.5) {$s_2$};
        \node[scale=0.8] at (3.5,0.5) {$s_2$};
        \node[scale=0.8] at (5.5,0.5) {$s_2$};
        \node[scale=0.8] at (2.5,1.5) {$s_2$};
        \node[scale=0.8] at (4.5,1.5) {$s_2$};
        \node[scale=0.8] at (3.5,2.5) {$s_2$};
        \node[scale=0.8] at (2.5,0.5) {$s_3$};
        \node[scale=0.8] at (4.5,0.5) {$s_3$};
        \node[scale=0.8] at (6.5,0.5) {$s_3$};
        \node[scale=0.8] at (3.5,1.5) {$s_3$};
        \node[scale=0.8] at (5.5,1.5) {$s_3$};
        \node[scale=0.8] at (4.5,2.5) {$s_3$};

        \node[scale=0.8] at (1.5,7.5) {$s_1$};
        \node[scale=0.8] at (3.5,7.5) {$s_1$};
        \node[scale=0.8] at (5.5,7.5) {$s_1$};
        \node[scale=0.8] at (2.5,6.5) {$s_1$};
        \node[scale=0.8] at (4.5,6.5) {$s_1$};
        \node[scale=0.8] at (3.5,5.5) {$s_1$};
        \node[scale=0.8] at (2.5,7.5) {$s_0$};
        \node[scale=0.8] at (4.5,7.5) {$s_0$};
        \node[scale=0.8] at (6.5,7.5) {$s_0$};
        \node[scale=0.8] at (3.5,6.5) {$s_0$};
        \node[scale=0.8] at (5.5,6.5) {$s_0$};
        \node[scale=0.8] at (4.5,5.5) {$s_0$};

        \node[scale=0.8] at (7.5,1.5) {$s_1$};
        \node[scale=0.8] at (7.5,3.5) {$s_1$};
        \node[scale=0.8] at (7.5,5.5) {$s_1$};
        \node[scale=0.8] at (6.5,2.5) {$s_1$};
        \node[scale=0.8] at (6.5,4.5) {$s_1$};
        \node[scale=0.8] at (5.5,3.5) {$s_1$};
        \node[scale=0.8] at (7.5,2.5) {$s_2$};
        \node[scale=0.8] at (7.5,4.5) {$s_2$};
        \node[scale=0.8] at (7.5,6.5) {$s_2$};
        \node[scale=0.8] at (6.5,3.5) {$s_2$};
        \node[scale=0.8] at (6.5,5.5) {$s_2$};
        \node[scale=0.8] at (5.5,4.5) {$s_2$};
        
        \node[scale=0.8] at (0.5,7.5) {$s_0$};
        \node[scale=0.8] at (1.5,6.5) {$s_0$};
        \node[scale=0.8] at (2.5,5.5) {$s_0$};
        \node[scale=0.8] at (3.5,4.5) {$s_0$};
        \node[scale=0.8] at (4.5,4.5) {$s_1$};
        \node[scale=0.8] at (4.5,3.5) {$s_2$};
        \node[scale=0.8] at (3.5,3.5) {$s_3$};
        \node[scale=0.8] at (5.5,5.5) {$s_1$};
        \node[scale=0.8] at (6.5,6.5) {$s_1$};
        \node[scale=0.8] at (7.5,7.5) {$s_1$};
        \node[scale=0.8] at (5.5,2.5) {$s_2$};
        \node[scale=0.8] at (6.5,1.5) {$s_2$};
        \node[scale=0.8] at (7.5,0.5) {$s_2$};
        \node[scale=0.8] at (0.5,0.5) {$s_3$};
        \node[scale=0.8] at (1.5,1.5) {$s_3$};
        \node[scale=0.8] at (2.5,2.5) {$s_3$};
        \end{tikzpicture}
    \end{center}
    \caption{A configuration that cannot be lifted to $\tilde G$, provided a square $s$ in $G$ which cannot be lifted to a square in $\tilde G$.}
     \label{figure:config.spec}
     \end{figure}
\end{proof}

This proposition can be made slightly stronger: to have $\Delta(G)\subset \Gamma$, it is equivalent that every configuration has a lift (regardless of the value at the origin).

\begin{proposition}\label{ proposition:config.lifting.2}Under the notations of the previous proposition,
if every $x\in X^d_{G}$ has a lift for $\theta$ to $X^d_{\tilde G}$, then $\Delta(G)\subset \Gamma$. 
\end{proposition}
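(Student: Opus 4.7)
The plan is to prove the contrapositive: assuming $\Delta(G) \not\subset \Gamma$, exhibit a single configuration $x \in X^d_G$ admitting no lift whatsoever. The previous proposition only ruled out lifts with one prescribed value at the origin, so the main task is to strengthen that obstruction so that it holds uniformly across all starting preimages.

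First I would algebraize the lifting condition. Viewing $x : \Z^d \to G$ as a graph homomorphism, one gets an induced group homomorphism $x_\star : \pi_1(\Z^d)[\boldsymbol 0] \to \pi_1(G)[x_{\boldsymbol 0}]$; let $K_x$ denote its image. Revisiting the proof of the previous proposition, $x$ admits a lift with $\tilde x_{\boldsymbol 0} = \Gamma(w)$ if and only if $w \star K_x \star w^{-1} \subset \Gamma$, so $x$ has some lift iff this holds for at least one walk $w$ from the base vertex to $x_{\boldsymbol 0}$. A short case analysis on backtracks in length-$4$ cycles shows that the image under $x$ of any unit square of $\Z^d$ reduces in $\pi_1(G)$ either to the identity or to a non-backtracking square of $G$; in particular $K_x \subset \Delta(G)$ for every $x$.

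The crucial observation is that $\Delta(G) \unlhd \pi_1(G)$ by Proposition \ref{prop:snormsubgroup}. Hence, if we can build $x_0 \in X^2_G$ satisfying $K_{x_0} = \Delta(G)$, normality yields $w \star K_{x_0} \star w^{-1} = \Delta(G) \not\subset \Gamma$ for every $w$, so $x_0$ admits no lift, contradicting the hypothesis. (For $d > 2$, one extends such an $x_0$ trivially along the extra coordinates, which preserves both the homshift property and the obstruction.)

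The main technical step, and the one I expect to be hardest, is the construction of such an $x_0$. Because $G$ is finite, $\Delta(G)$ is the normal closure of the finite set of squares of $G$, hence is generated as a subgroup by the family $\{p \star s \star p^{-1}\}$ with $s$ ranging over the squares of $G$ and $p$ ranging over (reduced words in a finite generating set of) $\pi_1(G)$. My strategy is to assemble $x_0$ out of copies of the central-square gadget from Figure \ref{figure:config.spec}, one for each generator, each reached from the origin by an "arm" of back-and-forth walks along a fixed spanning tree of $G$ that spells out the desired $p$, and separated by the same backtracking padding as in the original figure, which by design contributes only the identity to $K_{x_0}$. The delicate point is to verify that no padding or overlap introduces spurious non-trivial cycles outside the intended generating family, and to arrange the (countable) enumeration of the $p$'s inside a single valid homshift configuration — this is essentially the same bookkeeping that underlies the figure's construction, iterated over all squares and over a generating set of $\pi_1(G)$.
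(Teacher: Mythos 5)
Your reformulation via the image $K_x = x_\star\bigl(\pi_1(\Z^d)[\boldsymbol 0]\bigr)$, the inclusion $K_x\subseteq\Delta(G)$ (which really rests on the fact that $\pi_1^\square(\Z^d)$ is trivial, i.e.\ that every cycle of the grid is square-decomposable, not just the case analysis on a single unit square), and the appeal to normality of $\Delta(G)$ to dispose of all conjugates at once, is a genuinely different and more algebraic framing than the paper's. The paper instead works in the cover: it uses transitivity of $X^2_{\mathcal U_G}$ (cited from \cite{MR3743365}) to produce $x'\in X^2_{\mathcal U_G}$ containing the gadget at \emph{every} preimage of $s_0$, projects to $\alpha(x')$, argues via deck transformations that every lift of $\alpha(x')$ to $\tilde G$ forces the gadget through the bad $\tilde s_0$, and then breaks lifting by a single-vertex modification per gadget.

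The genuine gap is the construction of $x_0$ with $K_{x_0}=\Delta(G)$, which is the entire content of the argument and which you only sketch. Realizing a generator $p\star s\star p^{-1}$ requires a based cycle $\gamma$ in $\Z^2$ whose image under $x_0$ reduces to exactly that element, and your ``arm'' proposal leaves several real obstacles open: (i) the gadget of Figure~\ref{figure:config.spec} is a \emph{full-plane} configuration, not a bounded local pattern, so it cannot simply be copied at countably many disjoint sites of a single homomorphism without an explicit boundary treatment; (ii) each arm must be homotopically exact, i.e.\ the walk $x_0$ reads along it must reduce to $p$ itself, not merely to some walk with the same endpoints --- and this control is not automatic from placing patterns along a spanning tree; and (iii) all of this must be filled out to a valid configuration on all of $\Z^2$ without destroying (i) or (ii). The paper's detour through $X^2_{\mathcal U_G}$ is precisely what discharges this bookkeeping for free: a configuration over the universal cover records the homotopy class of each arm in its very values, so ``containing the gadget at every preimage of $s_0$'' reduces to a statement about finitely many local patterns whose simultaneous existence follows from an abstract transitivity lemma. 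So your high-level argument is sound, but the hardest step --- building $x_0$ --- is hand-waved, and your route is in fact carrying a heavier burden than the paper's.
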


\begin{proof}
For the sake of contradiction assume that $\Delta(G)\not\subset \Gamma$ and $d=2$. The proof carries forward verbatim to higher dimensions. By Lemma \ref{lemma: lift of squares}, there is a square $s$ in $G$ and $\tilde s_0\in V_{\tilde G}$ such that $\theta(\tilde s_0)=s_0$ and the lift of $s$ in $\tilde G$ starting at $\tilde s_0$ is not a square. Now consider the pattern
\[p=\begin{smallmatrix}
    s_0&s_1&s_0\\
    s_3&s_0&s_3\\
    s_0&s_1&s_0
\end{smallmatrix}.\]
Recall the notation for the covering map from the universal cover $\alpha:\mathcal U_G\to G$. It follows easily that for all $s_0'\in V_{\mathcal U_G}$ for which $\alpha(s_0')=s_0$, there is a unique lift of $p$ to $\mathcal U_G$ given by
\[ p^{s_0'}=\begin{smallmatrix}
    s_0'&s_1'&s_0'\\
    s_3'&s_0'&s_3'\\
    s_0'&s_1'&s_0'
\end{smallmatrix}.\]
By transitivity of $X^2_{\mathcal U_G}$, there exists $x'\in X^2_{\mathcal U_G}$ such that all patterns $p^{s_0'}$ appear in $x'$. Note that $\mathcal U_G$ can be infinite, in which case $X^2_{\mathcal U_G}$ is not a homshift, but the proof of \cite[Proposition 3.1]{MR3743365} (transitivity for homshifts) applies to infinite connected graphs. 

By Theorem \ref{theorem: transitive action of the regular covers} any lift of $\alpha(x')$ to $\mathcal U_G$ is of the form $\eta(x')$ for some $\eta\in \pi_1(G)$. Since $\eta$ is an automorphism of $\mathcal U_G$ it follows that all the patterns of the form $ p^{s_0'}$ must appear in $\eta(x')$, and thus in each lift of $\alpha(x')$ to $\mathcal U_G$. 

Now for all $\tilde t_0\in V_{\tilde G}$ for which $\theta(\tilde t_0)= s_0$, we know that there is a unique lift $\tilde p^{\tilde t_0}$ of $p$ to $\tilde G$ given by 
\[\tilde p^{\tilde t_0}=\begin{smallmatrix}
    \tilde t_0&\tilde t_1&\tilde t_0\\
    \tilde t_3&\tilde t_0&\tilde t_3\\
    \tilde t_0&\tilde t_1&\tilde t_0
\end{smallmatrix}.\]

Let $\alpha': \mathcal U_G\to \tilde G$ be the covering map. Since $\theta\circ \alpha'=\alpha$ we have that all possible patterns of the type $\tilde p^{\tilde t_0}$ appear in each lift $\tilde x\in X^2_{\tilde G}$ of $\alpha(x')$ to $\tilde G$.

Now consider $x\in X^2_G$ where we replace the middle $s_0$ by $s_2$ in every appearance of $p$ in $\alpha(x')$. Since the square $s$ does not lift to a square in $\tilde G$ starting at $\tilde s_0$, it follows that $x$ does not have a lift to $X^2_{\tilde G}$.
\end{proof}

The following corollary says that the square cover is the ``maximum'' cover to which homomorphisms from $\Z^d$ to the graph can be lifted to.
\begin{corollary}\label{corolarry.square.covering.map}
  Let $G$ be a graph and $\tilde G$ be a cover of $G$ and $\theta: \tilde G\to G$ be a covering map. Suppose that for all $x\in X_G^d$ there exists a lift $\tilde x\in X_{\tilde G}^d$, that is, $\theta\cdot \tilde x=x$. Then there exists a covering map $\theta': \mathcal U_G^{\square}\to \tilde G$ such that $\theta\circ \theta'=\alpha^{\square}$.
\end{corollary}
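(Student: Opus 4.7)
The plan is to use Remark \ref{theorem: covering maps and subgroups} to identify $\tilde G$ with a quotient of the universal cover, which together with Proposition \ref{ proposition:config.lifting.2} forces a containment $\Delta(G) \subset \Gamma$, and then to exhibit $\theta'$ as the associated projection map. Concretely, apply Remark \ref{theorem: covering maps and subgroups} to write $\tilde G \cong \mathcal{U}_G/\Gamma$ for some subgroup $\Gamma \subset \pi_1(G)$, with the covering map $\theta$ identified with $\alpha^\Gamma$. The hypothesis that every $x \in X_G^d$ admits a lift to $X_{\tilde G}^d$ then yields $\Delta(G) \subset \Gamma$ via Proposition \ref{ proposition:config.lifting.2}. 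This inclusion is the only input that the hypothesis is used for.

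Because $\Delta(G) \subset \Gamma$, define $\theta' : \mathcal{U}_G^\square \to \tilde G$ by $\theta'(\Delta(G)(w)) \coloneqq \Gamma(w)$ for every $w \in \mathcal{U}_G$. This is well-defined, and is a graph homomorphism since any edge of $\mathcal{U}_G^\square$ lifts to an edge of $\mathcal{U}_G$, whose image under the quotient $\mathcal{U}_G \to \mathcal{U}_G/\Gamma = \tilde G$ is the corresponding edge of $\tilde G$. The identity $\theta \circ \theta' = \alpha^\square$ is then immediate: both maps send $\Delta(G)(w)$ to $\alpha(w)$.

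The only technical point is to verify that $\theta'$ is a covering map, and this is handled by the same kind of local-isomorphism argument used in the proof of Theorem \ref{thm:subgroup-cover}. Since $\Delta(G)$ is normal in $\pi_1(G)$ by Proposition \ref{prop:snormsubgroup}, it is normal in $\Gamma$, so by Theorem \ref{theorem: transitive action of the regular covers} the quotient $\Gamma/\Delta(G)$ acts on $\mathcal{U}_G^\square$ by deck transformations of $\alpha^\square$, and $\theta'$ is precisely the quotient map for this action. To check the disjointness of neighborhoods at distinct preimages of a vertex $\tilde v \in V_{\tilde G}$, one observes that if $N_{\mathcal{U}_G^\square}(\Delta(G)(w))$ and $N_{\mathcal{U}_G^\square}(\Delta(G)(w'))$ share a common vertex $\Delta(G)(w'')$, lifting to $\mathcal{U}_G$ and using that $\alpha$ is already a covering map forces $w$ and $w'$ to lie in the same $\Gamma$-orbit, hence $\theta'(\Delta(G)(w)) = \theta'(\Delta(G)(w'))$. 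Surjectivity and injectivity of $\theta'$ restricted to each neighborhood follow in the same manner, by comparing with the fact that $\alpha^\square$ and $\alpha^\Gamma$ both locally project bijectively onto $N_G(\alpha(w))$.

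The main obstacle is a purely bookkeeping one, namely the covering-map verification above, but every ingredient (walk lifting, normality of $\Delta(G)$, the regular-cover machinery) is already available in the preceding sections, so no new idea is needed beyond recognizing that $\theta'$ is the quotient of $\alpha^\square$ by the residual action of $\Gamma/\Delta(G)$.
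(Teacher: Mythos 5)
Your overall approach is the same as the paper's, which proves the corollary in one line by invoking ``the correspondence of subgroups of the fundamental group and covers.'' You unpack this correctly: use Remark~\ref{theorem: covering maps and subgroups} to write $\tilde G \cong \mathcal U_G/\Gamma$ with $\theta$ identified with $\alpha^\Gamma$, invoke Proposition~\ref{ proposition:config.lifting.2} to get $\Delta(G) \subset \Gamma$, and then take $\theta'$ to be the induced map $\Delta(G)(w) \mapsto \Gamma(w)$. (You also silently repaired the statement: the printed composition $\theta' \circ \theta$ does not typecheck, and the intended identity is $\theta \circ \theta' = \alpha^\square$, which you use.)

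However, your verification of the covering property has a logical slip in the disjointness step. You write that a shared neighbor ``forces $w$ and $w'$ to lie in the same $\Gamma$-orbit, hence $\theta'(\Delta(G)(w)) = \theta'(\Delta(G)(w'))$.'' But $\Gamma(w) = \Gamma(w')$ is exactly the hypothesis that both vertices lie in $(\theta')^{-1}(\tilde v)$, so the conclusion is tautological; what the disjointness check must yield is the \emph{stronger} statement $\Delta(G)(w) = \Delta(G)(w')$, i.e.\ that the two vertices of $\mathcal U_G^\square$ coincide. The clean route avoids lifting to $\mathcal U_G$ entirely: if $\Delta(G)(w) \neq \Delta(G)(w')$ both lie in $(\theta')^{-1}(\tilde v)$, then $\alpha^\square(\Delta(G)(w)) = \theta(\tilde v) = \alpha^\square(\Delta(G)(w'))$, so they are distinct preimages of $\theta(\tilde v)$ under $\alpha^\square$, and since $\alpha^\square$ is already a covering map by Theorem~\ref{thm:subgroup-cover}, their neighborhoods in $\mathcal U_G^\square$ are disjoint. (More generally: a graph homomorphism $\theta'$ with $\theta \circ \theta' = \alpha^\square$, where both $\theta$ and $\alpha^\square$ are covering maps, is automatically a covering map, with the local bijectivity coming from $\theta'|_{N(u)} = (\theta|_{N(\tilde v)})^{-1} \circ \alpha^\square|_{N(u)}$.) This is a local repair, not a change of strategy, so your proof is essentially right.
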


\begin{proof}
This follows from the correspondence of subgroups of the fundamental group and covers of the graph $G$.
\end{proof}

\section{\label{section.undecidability}Undecidability}

In this section we prove the main Theorem \ref{thm:main}. For this we will prove and use the following realization theorem which already appeared in \cite[Lemma 4.4.3]{gao2018continuous}. They provided a brief sketch of a proof and we decided to give a more explicit construction with a self-contained argumentation. 

\begin{restatable}{theorem}{realization}
\label{thm:everygroup}
 Given any finitely presented group $\langle E:R\rangle$ one can algorithmically construct a (bipartite) graph $G$ such that 
    \[\pi_1^{\square}(G) \cong \langle E:R\rangle.\]
\end{restatable}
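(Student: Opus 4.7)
The plan is to mimic the classical construction of a presentation 2-complex, replacing 1-cells by long graph cycles and 2-cells by planar graphs subdivided into $4$-cycles (van Kampen-style square diagrams). First, normalize $\langle E : R\rangle$ so that every relation has even length by padding with subwords $e e^{-1}$ as needed; this does not alter the group. Then build a 1-skeleton $G_0$: take a base vertex $v_0$ and, for each $e \in E$, attach a simple cycle $C_e$ of even length $n \geq 8$ through $v_0$ with fresh internal vertices. Distinct cycles share only $v_0$ and have length $\geq 8$, so $G_0$ is bipartite and square-free; by Proposition \ref{proposition: Free product} we obtain $\pi_1^{\square}(G_0) \cong \pi_1(G_0) \cong \mathbb{F}_E$.

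For each $r = e_{i_1}^{\varepsilon_1}\cdots e_{i_k}^{\varepsilon_k} \in R$, interpret $r$ as a closed walk $w_r$ in $G_0$ obtained by traversing the oriented cycles $C_{e_{i_j}}^{\varepsilon_j}$ in order, and attach a \emph{disk of squares} $D_r$ along $w_r$. Here $D_r$ is a planar graph on fresh internal vertices, glued to $G_0$ only at the vertices of $w_r$, whose outer boundary is $w_r$ and whose inner faces are all $4$-cycles. Such a disk can be constructed algorithmically via concentric rings of fresh vertices, each joined to the previous ring by a layer of $4$-cycle faces, until the innermost ring has length $4$ and is capped off. Let $G$ denote the resulting graph; $G$ is bipartite by construction.

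To identify $\pi_1^{\square}(G)$, extend a spanning tree of $G_0$ (chosen inside each $C_e$) by a spanning tree inside each $D_r$ to obtain a spanning tree $T$ of $G$. Theorem \ref{thm:REforSquare} then presents $\pi_1^{\square}(G)$ with one generator per non-tree edge of $G$, modulo the edge-reversal relations and one relation per square of $G$. The generators internal to each $D_r$ can be eliminated successively via Lemma \ref{lemma.mod.presentation}(1), since each such non-tree edge appears exactly once in the relation coming from its defining face square of $D_r$. After all such eliminations, the internal squares of $D_r$ collectively impose precisely the relation $r = 1$ (by Remark \ref{remark: cycle in delta group} applied to the layered structure of $D_r$), and the remaining generators are exactly $E$, yielding a presentation equivalent to $\langle E : R\rangle$.

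The main obstacle is designing $D_r$ so that no extraneous squares appear --- either inside $D_r$ beyond its intended face squares, between $D_r$ and $G_0$ away from the boundary $w_r$, or between distinct disks that share boundary vertices in $G_0$. Using fresh internal vertices, long generator cycles in $G_0$, and sufficiently spaced ring layers inside each $D_r$ rules out accidental length-$4$ non-backtracking closed walks outside the planned face squares; once this is verified, the telescoping argument sketched above gives exactly the presentation $\langle E : R\rangle$ for $\pi_1^{\square}(G)$.
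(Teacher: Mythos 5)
Your overall plan mirrors the classical presentation-$2$-complex construction, which is also the paper's starting point, and your Steps 1--2 (the bouquet $G_0$ of long cycles, with $\pi_1^\square(G_0)\cong\mathbb{F}_E$) match the paper's graph $C^\omega_{E,N}$ exactly. The divergence, and the gap, is in how you attach the relation disks.

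You glue $D_r$ directly along the closed walk $w_r$ in $G_0$. But $w_r$ is never a simple cycle: it passes through the base vertex $v_0$ at positions $0, n, 2n, \ldots$, and more generally repeats many vertices whenever $r$ has a repeated generator or a generator and its inverse. After gluing, distinct boundary vertices $b_j$ and $b_{j'}$ of $D_r$ are identified, and their respective first-ring neighbors $d_j$, $d_{j'}$ become two distinct vertices sharing the neighbor $b_j=b_{j'}$. If $d_j$ and $d_{j'}$ ever have a second common neighbor, you get a new square $d_j - b_j - d_{j'} - ? - d_j$ that you did not intend. With the paper's shallow quadrangulation (Proposition \ref{proposition.quadrangulating.cn}: one internal ring $\iota_k$ plus a hub $\alpha$ joined to alternate $\iota_k$), this \emph{always} happens: $n$ is even, so $0$ and $n$ have the same parity, both $\iota_0$ and $\iota_n$ are adjacent to $\alpha$, and $\iota_0 - v_0 - \iota_n - \alpha - \iota_0$ is a genuine square. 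Worse, that cycle is square-equivalent (before adding the bad square) to the loop traversing $C_a$ once, i.e.\ to the generator $a$, so this extraneous square collapses $a$ to the identity and ruins the group. Your remedy --- ``sufficiently spaced ring layers'' --- is asserted but not constructed: rings of different sizes cannot all be joined by $4$-cycle faces (if the outer ring has $m$ vertices and the inner has $m'<m$, you cannot cover the annulus by squares alone), so the ``concentric rings shrinking to length $4$'' picture does not produce a flat quadrangulation without additional ideas, and the no-extraneous-squares claim is exactly the delicate point.

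The paper sidesteps this entirely by interposing a fresh simple $|r|N$-cycle $C_{r,M_r}$ between the disk and $G_0$: the disk $F_r$ is attached to $C_{r,M_r}$, which has no vertex identifications, so Theorem \ref{lemma: quadrangulation of cycles} applies cleanly; and the collar of edges $\upsilon_{r,k}$ joining $C_{r,M_r}$ to $w_r$ is just one layer of squares whose non-backtracking $4$-cycles are easily enumerated (Proposition \ref{rem:realization}). This collar is the key structural device your proposal is missing. Your telescoping elimination of generators via Lemma \ref{lemma.mod.presentation}(1) is the right tool (it is essentially what the paper does, via Lemma \ref{lem:reduction:hoph} and steps 2.1--2.5 of Theorem \ref{thm:realization}), but it is only sound once the set of squares is exactly the intended one, which your construction does not establish.
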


One key observation of \cite{gao2018continuous} on which the proof of Theorem \ref{thm:everygroup} relies is that the notion of square group of a graph has a natural interpretation in the framework of algebraic topology. In particular, any graph $G$ can be transformed naturally into a two-dimensional CW-complex by interpreting $G$ as a one-dimensional CW-complex and attach to each square in $G$ a copy of $[0,1]^2$. The square group of $G$ is the fundamental group of this CW-complex. This is an old idea and proofs which rely on it go back at least to \cite{rotman1973covering, Hatcher}.

In this section, after providing some intuition of why Theorem \ref{thm:everygroup} is true [Section \ref{section.example}], we propose a fully detailed proof [Section \ref{section.proof.realization}], following 
the presentation in \cite[Section 1.2]{Hatcher} and drawing inspiration from \cite{rotman1973covering}. Our proof relies in particular on an adapted version of Van Kampen theorem [Section \ref{section.van.kampen}] and the definition of a natural class of graphs having trivial square group [Section \ref{section.hopscotch}].
We then discuss consequences of this theorem in terms of undecidability of dynamical properties of homshifts in Section \ref{subsection: undecidable}. 

\subsection{\label{section.example}Some intuition for Theorem \ref{thm:everygroup}}
Let us begin by presenting a simple graph construction whose square group is $\mathbb Z/3\mathbb Z = \langle g : g^3\rangle$, since this is the simplest group for which there is no such trivial construction. We then provide some informal explanations for the general case, hoping that it is convincing enough for readers who do not want to go through the whole proof of the theorem.

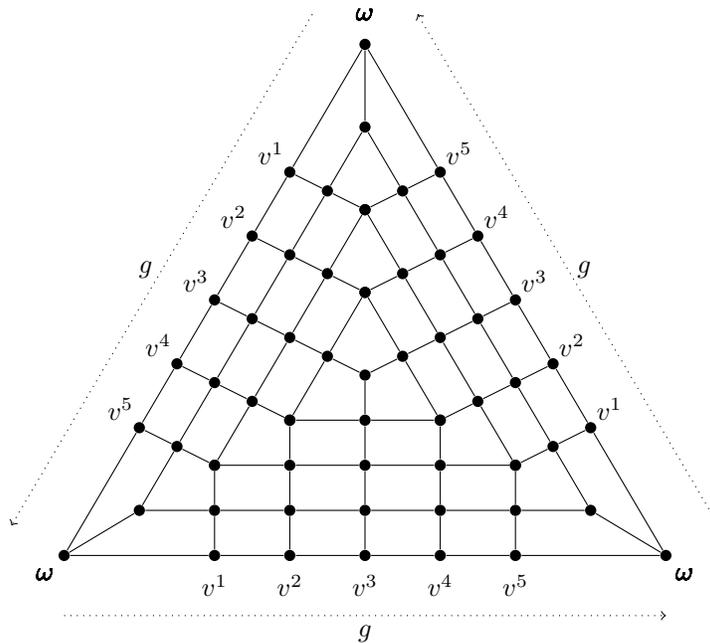
\begin{figure}[ht!]
\begin{center}
\begin{tikzpicture}[scale=1]
\draw [->, dotted] (-1, -1.4) -- (7,-1.4) node [midway, below] {$g$};
\draw [->, dotted] (7.7, -.2) -- (3.7,6.6) node [midway, right] {$g$};
\draw [->, dotted] (2.3, 6.6) -- (-1.7,-.2) node [midway, left] {$g$};

   \foreach \part/\name/\x/\y in {1/O/-1/-.6, 1/1/1/-.6, 1/2/2/-.6, 1/3/3/-.6, 1/4/4/-.6, 1/5/5/-.6, 2/O/7/-.6, 2/1/6/1.1, 2/2/5.5/1.95, 2/3/5/2.8, 2/4/4.5/3.65, 2/5/4/4.5, 3/O/3/6.2, 3/1/2/4.5, 3/2/1.5/3.65, 3/3/1/2.8, 3/4/.5/1.95, 3/5/0/1.1}
   {
        \node[fill, circle, inner sep=1.5pt](o\part\name) at (\x, \y) {};
        \ifthenelse{\equal{\name}{O}}{}{
        \ifthenelse{\equal{\part}{1}}
           {\node at (\x, \y-.4) {$v^\name$};}{}
        \ifthenelse{\equal{\part}{2}}
           {\node at (\x+.25, \y+.25) {$v^\name$};}{}
        \ifthenelse{\equal{\part}{3}}
           {\node at (\x-.25, \y+.25) {$v^\name$};}{}
        }
        \node at (-1.25,-.85) {$\omega$};
        \node at (7.25,-.85) {$\omega$};
        \node at (3,6.6) {$\omega$};
   }
   \draw (o1O) -- (o11) -- (o12) -- (o13) -- (o14) -- (o15) -- (o2O) -- (o21) -- (o22) -- (o23) -- (o24) -- (o25) -- (o3O) -- (o31) -- (o32) -- (o33) -- (o34) -- (o35) -- (o1O);

   \foreach \part/\name/\x/\y in {1/O/0/0, 1/1/1/0, 1/2/2/0, 1/3/3/0, 1/4/4/0, 1/5/5/0, 2/O/6/0, 2/1/5.5/0.85, 2/2/5/1.7, 2/3/4.5/2.55, 2/4/4/3.4, 2/5/3.5/4.25, 3/O/3/5.1, 3/1/2.5/4.25, 3/2/2/3.4, 3/3/1.5/2.55, 3/4/1/1.7, 3/5/.5/0.85}
   {
        \node[fill, circle, inner sep=1.5pt](\part\name) at (\x, \y) {};
        \draw (\part\name) -- (o\part\name);
   }
   \draw (1O) -- (11) -- (12) -- (13) -- (14) -- (15) -- (2O) -- (21) -- (22) -- (23) -- (24) -- (25) -- (3O) -- (31) -- (32) -- (33) -- (34) -- (35) -- (1O);
   
\foreach \part/\name/\x/\y in {11/1/1/0.6, 11/2/2/0.6, 11/3/3/0.6, 11/4/4/0.6, 11/5/5/0.6, 22/1/5/0.6, 22/2/4.5/1.45, 22/3/4/2.3, 22/4/3.5/3.15, 22/5/3/4, 33/1/3/4, 33/2/2.5/3.15, 33/3/2/2.3, 33/4/1.5/1.45, 33/5/1/0.6}
    {\node[fill, circle, inner sep=1.5pt](\part\name) at (\x, \y) {};
    \ifthenelse{\equal{\part}{11}}
           {\draw (1\name) -- (11\name);}{}
        \ifthenelse{\equal{\part}{22}}
           {\draw (2\name) -- (22\name);}{}
        \ifthenelse{\equal{\part}{33}}
           {\draw (3\name) -- (33\name);}{}
   }
   \draw (111) -- (112) -- (113) -- (114) -- (221) -- (222) -- (223) -- (224) -- (331) -- (332) -- (333) -- (334) -- (111);
    \foreach \part/\name/\x/\y in {111/2/2/1.2, 111/3/3/1.2, 111/4/4/1.2, 222/2/4/1.2, 222/3/3.5/2.05, 222/4/3/2.9, 333/2/3/2.9, 333/3/2.5/2.05, 333/4/2/1.2}
    {\node[fill, circle, inner sep=1.5pt](\part\name) at (\x, \y) {};
     \ifthenelse{\equal{\part}{111}}
           {\draw (11\name) -- (111\name);}{}
        \ifthenelse{\equal{\part}{222}}
           {\draw (22\name) -- (222\name);}{}
        \ifthenelse{\equal{\part}{333}}
           {\draw (33\name) -- (333\name);}{}
   }
   
   \draw (1112) -- (1113) -- (2222) -- (2223) -- (3332) -- (3333) -- (1112);
    \node[fill, circle, inner sep=1.5pt](R) at (3,1.8) {};
    \draw (1113) -- (R) (2223) -- (R) (3333) -- (R); 
\end{tikzpicture}
\end{center}
\caption{\label{fig:Z3}A bipartite graph whose square group is $\mathbb Z/3\Z$. All vertices with the same label are identified.}
\end{figure}

In Figure~\ref{fig:Z3}, any cycle beginning and ending at $\omega$ is square-equivalent to a cycle that only goes through vertices $\omega$ and $v^1$ to $v^5$. Up to removing backtracks, such a cycle is the concatenation of copies of either $g = \omega v^1\cdots v^5 \omega$ or $g^{-1} = \omega v^5\cdots v^1 \omega$. In particular, since removing squares and backtracks maintains parity, all the cycles are of even length, which implies that the graph is bipartite. By construction, the cycle $g^3$ is square-equivalent to the trivial cycle, and it should be intuitive that there is no other relation, so that the square group is $\mathbb Z/3\Z$.

Now let us give a very informal description for how one may proceed in a more general situation. Consider a finitely presented group $\langle E : R \rangle$. Start from a graph $G$ with a single vertex $\omega$. For each generator $g \in E$, we add to this graph a simple cycle of length 6 starting and ending at $\omega$, whose vertices which are different from $\omega$ are denoted by $v^1(g)$ to $v^5(g)$. At this point, the square-group is a free group with $|E|$ generators. For each relation $r = g_1^{\varepsilon_1}\dots g_n^{\varepsilon_n} \in R$, where $\varepsilon_k = \pm 1$, do the following:

\begin{itemize}
\item create a graph $G'$ which consists in a cycle of length $6n$ that we think as an $n$-gon whose sides are of length 6;
\item add to $G'$ a quadrangulation of this cycle (we leave to the reader to see that it is always possible, for instance following Figure~\ref{fig:Z3}); 
\item glue $G'$ onto $G$ in such a way that each corner of the $n$-gon is identified with $\omega$ and the $k$th side of this $n$-gon is identified vertex by vertex with $v^1(g_k) , \ldots , v^5(g_k)$ when $\varepsilon_k = 1$ and with $v^5(g_k) , \ldots , v^1(g_k)$ when $\varepsilon_k = -1$.
\end{itemize}

This defines a bipartite graph (for the same reason as above) whose square group is $\langle E : R \rangle$.

\subsection{Van Kampen theorem for square groups\label{section.van.kampen}}

This section is devoted to statement of a version of Van Kampen theorem adapted to square groups [Theorem \ref{theorem.vankampen}]. As mentioned in the introduction of Section~\ref{section.undecidability}, if the graph has no self-loops then the square group is isomorphic to the fundamental group of an associated CW-complex \cite{gao2018continuous}. Using this we could translate the standard Van Kampen theorem from algebraic topology to our setting and use it in our proofs later. However we want to keep the text as self-contained as possible and also cover the case when the graph has a self-loop. For this reason and to save the reader the inconvenience of translation, we decided to present a short argument proving what we need.

\begin{notation}
Given two graphs $G_1$ and $G_2$, let us define $G_1\cup G_2$ the union graph by $V_{G_1\cup G_2}=V_{G_1}\cup V_{G_2}$ and $E_{G_1\cup G_2}= E_{G_1}\cup E_{G_2}$ and $G_1\cap G_2$ the intersection graph by $V_{G_1\cap G_2}=V_{G_1}\cap V_{G_2}$ and $E_{G_1\cap G_2}= E_{G_1}\cap E_{G_2}$.
\end{notation}

The following is straightforward: 

\begin{lemma}\label{lemma.union.dec}
    Consider two groups $\Gamma_1 = \langle E_1:R_1\rangle$ and $\Gamma_2 = \langle E_2:R_2\rangle$ such that $E_1 \cap E_2 = \emptyset$. Then $\Gamma_1*\Gamma_2 = \langle E_1\cup E_2:R_1\cup R_2\rangle$.
\end{lemma}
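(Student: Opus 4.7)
The plan is to observe that this lemma is essentially a restatement of the definition of free product given in Section~\ref{section.background.group.presentations}, where the paper explicitly sets $\mathbb G * \mathbb H \coloneqq \langle E \cup E' : R \cup R'\rangle$ under the disjointness hypothesis on generators. The only point requiring care is the implicit identification of each $R_i$ as a subset of $\mathbb F_{E_1\cup E_2}$: since $E_1 \cap E_2 = \emptyset$, the natural map $\mathbb F_{E_i} \hookrightarrow \mathbb F_{E_1 \cup E_2}$ is an injective homomorphism, so each $R_i$ is unambiguously transported into the larger free group. With this identification, the right-hand side of the lemma literally matches the definition of $\Gamma_1 * \Gamma_2$.

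For a slightly more self-contained argument, I would verify that $H \coloneqq \langle E_1\cup E_2 : R_1\cup R_2\rangle$ satisfies the universal property of the free product. The inclusions $E_i \hookrightarrow E_1\cup E_2$ induce homomorphisms $\iota_i : \Gamma_i \to H$, which are well-defined because every relation in $R_i$ is killed in $H$. Given any group $\mathbb K$ with homomorphisms $\phi_i : \Gamma_i \to \mathbb K$, the disjointness of $E_1$ and $E_2$ lets me define a unique homomorphism $\Phi : \mathbb F_{E_1\cup E_2} \to \mathbb K$ by sending each generator $e \in E_i$ to $\phi_i(e)$. This $\Phi$ annihilates every element of $R_1\cup R_2$, hence factors through the normal closure, giving a homomorphism $H \to \mathbb K$ which extends both $\phi_i$. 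Uniqueness is immediate since $\iota_1(E_1) \cup \iota_2(E_2)$ generates $H$.

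I do not expect any real obstacle — the paper itself flags the statement as straightforward, and under the definition of free product already in use it is nearly tautological. The only spot where one has to be mindful is the identification of $R_i \subset \mathbb F_{E_i}$ with its image in $\mathbb F_{E_1 \cup E_2}$, which is precisely what the hypothesis $E_1 \cap E_2 = \emptyset$ ensures.
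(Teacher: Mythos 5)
Your proposal is correct and matches the paper's (implicit) approach: the paper labels the lemma as straightforward and gives no proof, precisely because, under its own definition of free product in Section~\ref{section.background.group.presentations}, the statement is essentially definitional once one identifies $R_i \subset \mathbb F_{E_i}$ with its image in $\mathbb F_{E_1 \cup E_2}$, as you note. Your supplementary verification via the universal property is a harmless (and clarifying) addition beyond what the paper records.
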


The following is an immediate consequence of Theorem \ref{thm:REforSquare} and Lemma \ref{lemma.union.dec}.
\begin{theorem}[Van Kampen theorem]\label{theorem.vankampen}
Let $(G_{i})_{i=1,\ldots,n}$ be a finite sequence of connected graphs.
Let $T$ be a spanning tree of $\cup_{i=1}^n G_i$ such that 
for all $i$, $T_i \coloneqq T\cap G_i$ is a spanning tree of $G_i$. 
Set $\tilde E= \cup_{i=1}^n E_{G_i}$ and $\tilde R=\cup_{i=1}^n R^{\square}_{T_i}(G_i)\cup R_{\texttt{add}}$, where $R_{\texttt{add}}$ is the set of squares $e_1e_2e_3e_4$, where $e_j$ is in $\tilde E$ for all $j$, which are not contained in any of the graphs $G_i$, $i \in \llbracket 1,n\rrbracket$.  
Then we have:  \[\pi_1^\square(\cup_{i=1}^n G_i) = \langle \tilde E~:~\tilde R\rangle.\]
\end{theorem}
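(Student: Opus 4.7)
The plan is to apply Theorem \ref{thm:REforSquare} directly to the union graph $G \coloneqq \bigcup_{i=1}^n G_i$ with the given spanning tree $T$. This immediately yields $\pi_1^\square(G) \cong \langle E_G : R^\square_T(G)\rangle$, so it suffices to verify the two set-level identifications $E_G = \tilde E$ and $R^\square_T(G) = \tilde R$. The first is immediate from the definition of the union graph.

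For the second, I would decompose $R^\square_T(G) = E_T \cup \{ee' : e = (u,v),\ e' = (v,u)\} \cup \{\text{squares of } G\}$ and check each piece separately. For the tree edges, the hypothesis $T_i = T \cap G_i$ gives $E_{T_i} = E_T \cap E_{G_i}$, so taking the union over $i$ yields $\bigcup_i E_{T_i} = E_T \cap E_G = E_T$ (since $T$ is a subgraph of $G$). For the reverse-edge pairs, each pair $\{(u,v),(v,u)\}$ in $G$ lies in some $G_i$, because both directions of an edge belong to the same graph in which that edge sits (the $G_i$ being undirected), so the set of such pairs in $G$ is the union of the corresponding sets in the $G_i$. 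For the squares, every square of $G$ either is entirely contained in some $G_i$ — contributing to $R^\square_{T_i}(G_i)$ — or uses edges from at least two different $G_i$'s, in which case it belongs by definition to $R_{\texttt{add}}$. Assembling these three pieces gives $R^\square_T(G) = \bigcup_{i=1}^n R^\square_{T_i}(G_i) \cup R_{\texttt{add}} = \tilde R$.

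There is no substantive obstacle here: once Theorem \ref{thm:REforSquare} is granted, the remainder is routine bookkeeping about how the tree, the reverse-edge relations, and the squares of $G$ decompose relative to the subgraphs $G_i$. The only conceptual point worth emphasizing — and the reason $R_{\texttt{add}}$ must appear explicitly in the statement — is that new squares can emerge in $G$ that are not contained in any single $G_i$; these ``cross-graph'' squares are precisely what Lemma \ref{lemma.union.dec} would miss when naïvely combining presentations of the $\pi_1^\square(G_i)$, and they must be imposed as additional relators to recover the correct square group.
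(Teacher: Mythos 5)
Your proposal is correct and takes the same route as the paper, which only indicates (above the theorem statement) that the result is ``an immediate consequence of Theorem~\ref{thm:REforSquare} and Lemma~\ref{lemma.union.dec}''; your set-level bookkeeping is precisely the elaboration of that indication, and it is carried out carefully. You do not actually invoke Lemma~\ref{lemma.union.dec}, but this is not a gap: the direct verification that $R^\square_T(\cup_i G_i) = \tilde R$ makes the free-product lemma unnecessary here (in the paper it is really used afterwards, in the proof of Corollary~\ref{corollary: wedge product}, to recognize $\langle \cup_i E_{G_i} : \cup_i R^\square_{T_i}(G_i)\rangle$ as the free product $*_i\,\pi_1^\square(G_i)$ once $R_{\texttt{add}}$ is empty).
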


\begin{remark}
    In the literature, Van Kampen theorem is formulated in terms of amalgamated products. Although we do not state it this way, it is possible to formulate Theorem \ref{theorem.vankampen} in those terms.
\end{remark}

\begin{lemma}\label{lemma:tree.extension}
    For every connected graph $H$, every connected graph $\tilde{H}$ which extends it, and every spanning tree $T$ of $H$, there exists a spanning tree $\tilde{T}$ of $\tilde{H}$ such that $T$ is a sub-graph of $\tilde{T}$.
\end{lemma}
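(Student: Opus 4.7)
The plan is a straightforward greedy/maximality argument. Consider the collection $\mathcal{S}$ of subgraphs $T'$ of $\tilde H$ such that $T$ is a subgraph of $T'$, $T'$ is connected, and $T'$ contains no nontrivial simple cycle. Then $\mathcal{S}$ is nonempty since $T \in \mathcal{S}$, and since $\tilde H$ is finite $\mathcal{S}$ admits a maximal element $\tilde T$ with respect to inclusion of the vertex and edge sets.

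I claim $\tilde T$ is the desired spanning tree. By construction $\tilde T$ is connected, has no nontrivial simple cycle, and contains $T$, so it suffices to check $V_{\tilde T} = V_{\tilde H}$. Suppose for contradiction that some $v \in V_{\tilde H} \setminus V_{\tilde T}$ exists. Since $\tilde H$ is connected, there is a walk in $\tilde H$ from any vertex of $\tilde T$ to $v$; along such a walk there must occur an edge $(u,w) \in E_{\tilde H}$ with $u \in V_{\tilde T}$ and $w \notin V_{\tilde T}$. Let $\tilde T'$ be obtained from $\tilde T$ by adjoining the vertex $w$ and the edge pair $(u,w), (w,u)$ (recall our graphs are undirected, so edges come in reverse pairs). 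Then $\tilde T'$ is connected, and any nontrivial simple cycle in $\tilde T'$ would have to traverse the new edge $(u,w)$; but $w$ has no other neighbor in $\tilde T'$, so no such cycle exists. Hence $\tilde T' \in \mathcal{S}$, contradicting the maximality of $\tilde T$.

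There is no real obstacle: the argument is the standard pendant-edge extension trick. The only bookkeeping points to be careful about are (i) that adding a single pendant vertex $w$ with its pair of incident edges cannot create any new simple cycle, which follows immediately because $w$ has degree one in $\tilde T'$, and (ii) the argument uses finiteness of $\tilde H$ to extract a maximal element; if one wished to drop finiteness one could invoke Zorn's lemma in its stead, but this is not needed under the standing convention in the paper.
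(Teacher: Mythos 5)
Your proof is correct and is essentially the same argument as the paper's: both extend $T$ by repeatedly adjoining a pendant edge $(u,w)$ with $u$ already in the tree and $w$ outside, using connectedness of $\tilde H$ to guarantee such an edge exists whenever the current tree is not spanning. The only cosmetic difference is that you package the iteration as a maximality argument on the finite poset $\mathcal{S}$, while the paper runs the induction explicitly until it stabilizes.
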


\begin{proof}
    We do this as follows. Set $T_0 \coloneqq T$, and assume that we have constructed trees $T_0 , \ldots , T_k$ in $\tilde H$ such that each tree in this sequence is a sub-graph of the next one. Define $T_{k+1}$ as follows. If $T_k$ is not a spanning tree of $\tilde H$, then there is an edge in $\tilde H$ which has exactly one vertex in $T_k$. Indeed, since $\tilde H$ is connected, if there is an edge in $\tilde H$ which does not have any vertex in $T_k$, there is one which has exactly one vertex in $T_k$. When this is the case, $T_{k+1}$ is obtained from $T_k$ by adding this edge and the vertex in it which is not in $T_k$. Otherwise, $T_{k+1} \coloneqq T_k$. Since $H$ is finite, there exists $k$ such that $T_{k+1} = T_k$. Thus $T_k$ is a spanning tree of $\tilde H$ which extends $T$. 
\end{proof}

\begin{lemma}\label{lemma.tree.extension}
    Let $G$ and $(G_i)_{i=1}^n$ be connected graphs such that for all $i \neq j$, $G_i \cap G_j = G$. For every spanning tree $T$ of $G$, there is a spanning tree $\tilde T$ of $\bigcup_{i=1}^n G_i$ which extends $T$ and $\tilde T \cap G_i$ is a spanning tree of $G_i$ for every $i$.
\end{lemma}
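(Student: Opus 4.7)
The plan is to first extend $T$ into a spanning tree $T_i$ of each $G_i$ individually using Lemma~\ref{lemma:tree.extension}, and then show that $\tilde T \coloneqq \bigcup_{i=1}^n T_i$ is a spanning tree of $\bigcup_{i=1}^n G_i$ which automatically satisfies $\tilde T \cap G_i = T_i$.

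The first key observation I would establish is that for each $i$, $T_i \cap G = T$ as subgraphs. The inclusion $T \subseteq T_i \cap G$ is clear from the construction. Conversely, any edge $e \in E_{T_i} \cap E_G \setminus E_T$ would have both endpoints in $V_T = V_G$, so adding $e$ to the spanning tree $T$ of $G$ would create a cycle; since $T \cup \{e\} \subseteq T_i$, this contradicts the acyclicity of $T_i$. Combined with the hypothesis $G_i \cap G_j = G$ for $i \neq j$, this immediately yields $T_i \cap T_j = T$.

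Then I would proceed by induction on $n$, showing that $S_k \coloneqq \bigcup_{i=1}^k T_i$ is a spanning tree of $H_k \coloneqq \bigcup_{i=1}^k G_i$. The base case $k=1$ is Lemma~\ref{lemma:tree.extension} applied to $G_1$. For the inductive step, I would first check that $S_{k-1} \cap T_k = T$: since $S_{k-1} \subseteq H_{k-1}$ and $H_{k-1} \cap G_k = \bigcup_{i<k}(G_i \cap G_k) = G$, the intersection $S_{k-1} \cap T_k$ lies in $G$, and the same fundamental-cycle argument as above applied to the tree $S_{k-1}$ (which contains $T$ by the inductive hypothesis) gives $S_{k-1} \cap G = T$. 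Connectedness of $S_k = S_{k-1} \cup T_k$ then follows because $S_{k-1}$ and $T_k$ are each connected and share the connected, non-empty graph $T$. For acyclicity I would use a dimension count: since $S_{k-1} \cap T_k = T$, we have $|E_{S_k}| = |E_{S_{k-1}}| + |E_{T_k}| - |E_T|$ and $|V_{S_k}| = |V_{H_{k-1}}| + |V_{G_k}| - |V_G|$, and plugging in $|E| = |V|-1$ for the three trees involved (by induction and definition) yields $|E_{S_k}| = |V_{S_k}| - 1$. A connected graph with $|V|-1$ edges is a tree, which closes the induction and simultaneously shows $V_{S_k} = V_{H_k}$, so $\tilde T = S_n$ spans $\bigcup_i G_i$.

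The main obstacle to watch out for is the intersection identity $T_i \cap G = T$, from which everything else cascades: it is what prevents $\tilde T$ from gaining extra edges between vertices of $V_G$, and it is essential both for the edge count and for propagating the property $S_{k-1} \cap G = T$ through the induction. Everything else is an assembly of elementary spanning-tree facts.
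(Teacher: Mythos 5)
Your proof is correct and follows the same high-level approach as the paper's: extend $T$ to a spanning tree $T_i$ of each $G_i$ via Lemma~\ref{lemma:tree.extension}, then argue that $\tilde T = \bigcup_i T_i$ is a spanning tree of the union. The paper's proof is considerably more terse (it asserts "It is a tree because $G_i \cap G_j = G$" without further justification), whereas you correctly isolate the key intersection identity $T_i \cap G = T$ and close the acyclicity argument with a clean edge--vertex count; your write-up supplies exactly the missing details.
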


\begin{proof}
 We use Lemma \ref{lemma:tree.extension} for $H$ equal to $G$ and for $\tilde H$ equal to $G_i$, for each $i$. The union $\tilde T$ of the obtained trees satisfies the conditions of the statement. By construction $\tilde T \cap G_i$ is a spanning tree of $G_i$ for all $i$. It is straightforward that every vertex of $\cup_{i=1}^n G_i$ is in $\tilde T$. It is a tree because $G_i \cap G_j = G$ for all $i \neq j$. It is thus a spanning tree of $\cup_{i=1}^n G_i$.
\end{proof}

\begin{remark}
    As a consequence, the tree $\tilde T$ provided by Lemma~\ref{lemma.tree.extension} satisfies the conditions in Theorem \ref{theorem.vankampen}. 
\end{remark}

\begin{corollary}\label{corollary: wedge product}
    Let $(G_i)_{i=1}^n$ a finite sequence of graphs such that there is some vertex $v$ in the union $\cup_{i=1}^n G_i$ such that for all $i \neq j$, $G_i\cap G_j$ is the graph with only vertex $v$ and no edge. Assume that each square of $\cup_{i=1}^n G_i$ is contained in $G_j$ for some $j$.
    Then \[\pi_1^\square(\cup G_i)\cong *_{i=1}^n\pi_1^\square(G_i).\]
\end{corollary}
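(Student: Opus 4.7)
The plan is to apply the Van Kampen theorem for square groups (Theorem \ref{theorem.vankampen}) and check that each ingredient of the presentation it produces decomposes cleanly into one piece per summand.

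First, I would build a suitable spanning tree. Since the intersection of any two $G_i$ is the single-vertex graph $\{v\}$ with no edges, the trivial tree $T_0 = (\{v\}, \emptyset)$ is a common subgraph of all the $G_i$. I would extend $T_0$ to a spanning tree $T_i$ of each $G_i$ using Lemma \ref{lemma:tree.extension}, and then take $\tilde T = \bigcup_i T_i$. Because any two $T_i$ meet only at $v$, the union $\tilde T$ is a tree, and it is clearly spanning on $\bigcup_i G_i$; moreover $\tilde T \cap G_i = T_i$ is a spanning tree of $G_i$, so $\tilde T$ satisfies the hypothesis of Theorem \ref{theorem.vankampen} (alternatively, Lemma \ref{lemma.tree.extension} applied with $G = (\{v\}, \emptyset)$ gives exactly this).

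Next, I would apply Theorem \ref{theorem.vankampen} to obtain
\[\pi_1^\square\bigl(\cup_{i=1}^n G_i\bigr) \;=\; \bigl\langle \tilde E ~:~ \tilde R \bigr\rangle, \qquad \tilde E = \bigcup_i E_{G_i}, \quad \tilde R = \bigcup_i R_{T_i}^\square(G_i) \cup R_{\texttt{add}}.\]
The key observation is that $R_{\texttt{add}} = \emptyset$: by hypothesis every square of $\bigcup_i G_i$ is contained in some $G_j$, which is precisely the condition that forbids extra squares to appear. Furthermore, since $E_{G_i \cap G_j} = \emptyset$ for $i \neq j$, the edge sets $E_{G_i}$ are pairwise disjoint, and each relation in $R_{T_i}^\square(G_i)$ involves only edges of $E_{G_i}$ (the tree edges of $T_i$, the length-two backtracks in $G_i$, and the squares of $G_i$ all live inside $G_i$).

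Finally, applying Lemma \ref{lemma.union.dec} inductively to the disjoint presentations $(E_{G_i}, R_{T_i}^\square(G_i))$ yields
\[\pi_1^\square\bigl(\cup_{i=1}^n G_i\bigr) \;=\; \bigl\langle \tilde E : \tilde R \bigr\rangle \;\cong\; *_{i=1}^n \bigl\langle E_{G_i} : R_{T_i}^\square(G_i) \bigr\rangle \;\cong\; *_{i=1}^n \pi_1^\square(G_i),\]
where the last isomorphism uses Theorem \ref{thm:REforSquare} applied to each $G_i$ with spanning tree $T_i$. The only place where the hypotheses of the corollary genuinely intervene is in killing $R_{\texttt{add}}$ and in making the edge sets disjoint; everything else is bookkeeping. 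I expect the subtlest step to be verifying that no extraneous square can arise from combining edges of different $G_i$'s — but this is exactly what the assumption that every square is contained in a single $G_j$ rules out.
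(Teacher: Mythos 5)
Your proposal is correct and follows the same route as the paper's proof: construct a spanning tree of the union whose intersection with each $G_i$ is a spanning tree of $G_i$ (using Lemma \ref{lemma.tree.extension} / \ref{lemma:tree.extension}), invoke Theorem \ref{theorem.vankampen}, observe that $R_{\texttt{add}}=\emptyset$ by hypothesis and that the $E_{G_i}$ are pairwise disjoint, and conclude via Lemma \ref{lemma.union.dec} and Theorem \ref{thm:REforSquare}. If anything, your version is slightly more explicit than the paper's about why the tree restricts to a spanning tree of each $G_i$ and about the final isomorphism chain, but the argument is the same.
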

\begin{remark}
    The assumption that each square of $\cup_{i=1}^n G_i$ is contained in $G_j$ for some $j$ is meant to avoid a square made of a triangle and a self-loop, or two self-loops on adjacent vertices, belonging to two different $G_i$, which may still appear with a single shared vertex.
    \end{remark}
\begin{proof}
Fix a spanning tree $T$ of the graph $\cup_{i=1}^n G_i$ and set $T_i \coloneqq G_i\cap T$ for all $i$. Since all $G_i$ share a single vertex, $T_i$ is a spanning tree of $G_i$. For all $i \neq j$, $E_{G_i}$ is disjoint from $E_{G_j}$.
Since for each {square}
of $\cup_{i=1}^n G_i$ there is some $j$ such that it is contained in $G_j$, the set $R_{\texttt{add}}$ in Theorem \ref{theorem.vankampen} is empty. Thus by an application of this theorem: 
\[\pi_1^\square(\cup_{i=1}^n G_i)\cong \langle\cup_{i=1}^n E_{G_i} : \cup_{i=1}^n R^{\square}_{T_i}(G_i)\rangle\cong *_{i=1}^n\pi_1^\square(G_i).\qedhere\]
\end{proof}

 Let us discuss some key examples.

 \begin{example}\label{example.self-loop}
     Let $G$ be a bipartite graph and the graph $\tilde G$ be obtained from $G$ by adding a self-loop on some vertex in $G$. Since $G$ is bipartite, all its cycles have even length, which implies that every square of $\tilde G$ is contained in $G$. Thus Corollary \ref{corollary: wedge product} implies that: 
         \[\pi_1^\square(\tilde G)\cong \pi_1^\square(G)*\Z/2\Z.\]
\end{example}

\begin{notation}
    For all $n > 0$, denote by $C_n$ the $n$-cycle graph defined by $V_{C_n} = \mathbb{Z}/n\mathbb{Z}$ and $E_{C_n} =\left\{(k,k \pm 1) : k \in \mathbb{Z}/n\mathbb{Z}\right\}$.
\end{notation}

\begin{example}
    Suppose that for all $i$, $G_i$ is isomorphic to $C_{N_i}$ for some $N_i > 4$, and that there is a vertex $v$ such that for all $i \neq j$, $G_i\cap G_j$ is the graph with unique vertex $v$ and no edge. Then $\cup_{i=1}^n G_i$ does not contain any square, which implies that:
         \[\pi_1^\square(\cup G_i)\cong *_{i=1}^n\Z\cong \mathbb{F}_n.\]
 \end{example}

\subsection{Flat quadrangulations\label{section.hopscotch}}

A graph $G$ is said to be \emph{planar} when it has a planar embedding, which is a family of maps $p=(p_V,(p_e)_{e\in E})$ such that $p_{V} : V_G \to \mathbb R^2$ is injective and for each $e\in E_G$, $p_e$ is a continuous map $[0,1] \rightarrow \mathbb R^2$ such that: $p_{(v,w)}$ starts at $p_V(v)$ and ends at $p_V(w)$, $p_{(v,w)}(t) = p_{(w,v)}(1-t)$ for all $t$, and the image of $p_e$ and $p_{e'}$ for $e\neq e'^{\pm 1}$ can only intersect on their extremities. A \textit{plane} graph is a tuple $(G,p)$, where $G$ is a planar graph and $p$ is a planar embedding of $G$.

Every plane graph divides $\mathbb R^2$ into parts that are called \emph{faces}. More precisely, faces are the connected components of the space obtained by removing the images of the planar embedding from $\mathbb R^2$. Exactly one of these faces is unbounded
and is called the \emph{external face}. The other ones are called \emph{internal faces}. When a planar embedding is specified, we associate to each face of the plane 
graph its \emph{border}, which is the subgraph of all vertices and edges of $G$ whose images by the embedding are adjacent to this face. We denote by $\partial G$ the border of the external face.

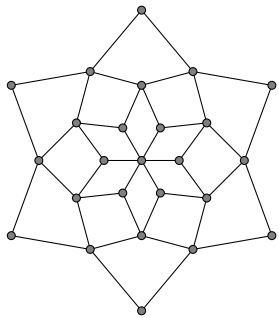
\begin{figure}[ht!]
    \centering
        \begin{tikzpicture}[scale=0.5]
        \begin{scope}[xshift=-10cm]
    \draw (0,0) -- (0.5,0.866) -- (0,2) -- (-0.5,0.866) --(0,0) -- (-1,0) -- (-1.732,-1) -- (-0.5,-0.866) -- (0,0) -- (0.5,-0.866) -- (1.732,-1) -- (1,0) -- (0,0);
\draw (0.5,0.866) -- (1.732,1) -- (1,0);
\draw (-0.5,0.866) -- (-1.732,1) -- (-1,0);
\draw (-0.5,-0.866) -- (0,-2) -- (0.5,-0.866);
\draw (0,2) -- (1.366,2.366) -- (0,4) -- (-1.366,2.366) -- (0,2);
\draw (0,-2) -- (1.366,-2.366) -- (0,-4) -- (-1.366,-2.366) -- (0,-2);
\draw (1.732,1) -- (1.366,2.366) -- (3.464,2) -- (2.732,0) -- (1.732,1); 
\draw (-1.732,1) -- (-2.732,0) -- (-3.464,2) -- (-1.366,2.366) -- (-1.732,1);
\draw (1.732,-1) -- (2.732,0) -- (3.464,-2) -- (1.366,-2.366) -- (1.732,-1);
\draw (-1.732,-1) -- (-2.732,0) -- (-3.464,-2) -- (-1.366,-2.366) -- (-1.732,-1);

\draw[fill=gray] (1.366,2.366) circle (3pt);
\draw[fill=gray] (-1.366,2.366) circle (3pt);
\draw[fill=gray] (1.366,-2.366) circle (3pt);
\draw[fill=gray] (2.732,0) circle (3pt);
\draw[fill=gray] (-2.732,0) circle (3pt);
\draw[fill=gray] (-1.366,-2.366) circle (3pt);

\draw[fill=gray] (0,0) circle (3pt);
\draw[fill=gray] (0,2) circle (3pt);
\draw[fill=gray] (0,-2) circle (3pt);
\draw[fill=gray] (1.732,1) circle (3pt);
\draw[fill=gray] (1.732,-1) circle (3pt);
\draw[fill=gray] (-1.732,1) circle (3pt);
\draw[fill=gray] (-1.732,-1) circle (3pt);

\draw[fill=gray] (0,4) circle (3pt);
\draw[fill=gray] (0,-4) circle (3pt);
\draw[fill=gray] (3.464,2) circle (3pt);
\draw[fill=gray] (3.464,-2) circle (3pt);
\draw[fill=gray] (-3.464,2) circle (3pt);
\draw[fill=gray] (-3.464,-2) circle (3pt);
\draw[fill=gray] (0.5,0.866) circle (3pt);
\draw[fill=gray] (1,0) circle (3pt);
\draw[fill=gray] (0.5,-0.866) circle (3pt);
\draw[fill=gray] (-0.5,0.866) circle (3pt);
\draw[fill=gray] (-1,0) circle (3pt);
\draw[fill=gray] (-0.5,-0.866) circle (3pt);

\end{scope}
    \end{tikzpicture}
    \caption{Example of flat quadrangulation.}
    \label{fig:quadrangulation.10}
\end{figure}

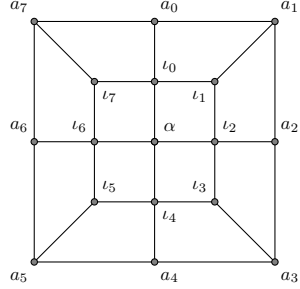
\begin{figure}[ht!]
    \centering
        \begin{tikzpicture}[scale=0.4]

\draw (-2,-2) rectangle (2,2);
\draw (-4,-4) rectangle (4,4);
\draw (-4,-4) -- (-2,-2);
\draw (4,-4) -- (2,-2);
\draw (-4,4) -- (-2,2);
\draw (4,4) -- (2,2);
\draw (-4,0) -- (4,0);
\draw (0,-4) -- (0,4);
\draw[fill=gray] (0,0) circle (3pt);
\draw[fill=gray] (-2,0) circle (3pt);
\draw[fill=gray] (0,-2) circle (3pt);
\draw[fill=gray] (-2,-2) circle (3pt);
\draw[fill=gray] (0,0) circle (3pt);
\draw[fill=gray] (2,0) circle (3pt);
\draw[fill=gray] (0,2) circle (3pt);
\draw[fill=gray] (2,2) circle (3pt);
\draw[fill=gray] (-2,2) circle (3pt);
\draw[fill=gray] (2,-2) circle (3pt);
\draw[fill=gray] (0,-4) circle (3pt);
\draw[fill=gray] (0,4) circle (3pt);
\draw[fill=gray] (-4,0) circle (3pt);
\draw[fill=gray] (4,0) circle (3pt);
\draw[fill=gray] (4,4) circle (3pt);
\draw[fill=gray] (-4,-4) circle (3pt);
\draw[fill=gray] (4,-4) circle (3pt);
\draw[fill=gray] (-4,4) circle (3pt);
\node[scale=0.7] at (0.5,2.5) {$\iota_0$};
\node[scale=0.7] at (0.5,4.5) {$a_0$};
\node[scale=0.7] at (0.5,-2.5) {$\iota_4$};
\node[scale=0.7] at (0.5,-4.5) {$a_4$};
\node[scale=0.7] at (-2.5,0.5) {$\iota_6$};
\node[scale=0.7] at (-4.5,0.5) {$a_6$};
\node[scale=0.7] at (2.5,0.5) {$\iota_2$};
\node[scale=0.7] at (4.5,0.5) {$a_2$};
\node[scale=0.7] at (1.5,1.5) {$\iota_1$};
\node[scale=0.7] at (4.5,4.5) {$a_1$};
\node[scale=0.7] at (1.5,-1.5) {$\iota_3$};
\node[scale=0.7] at (4.5,-4.5) {$a_3$};
\node[scale=0.7] at (-1.5,-1.5) {$\iota_5$};
\node[scale=0.7] at (-4.5,-4.5) {$a_5$};
\node[scale=0.7] at (-1.5,1.5) {$\iota_7$};
\node[scale=0.7] at (-4.5,4.5) {$a_7$};
\node[scale=0.7] at (0.5,0.5) {$\alpha$};
    \end{tikzpicture}
    \caption{Illustration for the proof of Proposition \ref{proposition.quadrangulating.cn}.}
    \label{fig:quadrangulation.11}
\end{figure}

\begin{definition}
A connected planar graph $G$ is called a \textbf{flat quadrangulation} if it has a planar embedding for which borders of all internal faces are squares and every square is the border of some internal face. Figure \ref{fig:quadrangulation.10} provides an example of flat quadrangulation graph.
\end{definition}

\begin{proposition}\label{proposition.quadrangulating.cn}
     For any graph $H$ isomorphic to some $C_n$ with $n\geq 6$ even, there exists a flat quadrangulation $G$ such that $\partial G = H$ and no vertex in $V_G \backslash V_{\partial G}$ has at least two neighbors in $\partial G$.
\end{proposition}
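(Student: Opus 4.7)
The plan is to give an explicit construction that works uniformly for every even $n = 2m \geq 6$. Label the vertices of $H$ cyclically as $a_0, \ldots, a_{n-1}$ and build $G$ by adjoining to $H$ an inner $n$-cycle on fresh vertices $\iota_0, \ldots, \iota_{n-1}$, a central vertex $\alpha$, radial edges $a_i \iota_i$ for every $i \in \Z/n\Z$, and spokes $\alpha \iota_{2j}$ for every $j \in \Z/m\Z$. For $n = 8$ this recovers the picture in Figure~\ref{fig:quadrangulation.11}. A planar embedding is obtained by placing the $a_i$ at angle $2\pi i / n$ on a circle of radius $2$, the $\iota_i$ at the same angles on the concentric circle of radius $1$, and $\alpha$ at the origin, all edges being straight segments; since the radial and spoke edges lie along the radii of the two circles, one verifies directly that no two edges cross.

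In this embedding the bounded faces split into two families: $n$ annular quadrilaterals $a_i a_{i+1} \iota_{i+1} \iota_i$ and $m$ wedge-shaped quadrilaterals $\alpha \iota_{2j} \iota_{2j+1} \iota_{2j+2}$. An Euler-characteristic check ($V = 2n+1$, $E = 3n + m$, hence $F = 3m + 1$) confirms that together with the unique unbounded face these exhaust the complement of the embedding, so in particular $\partial G = H$ and every bounded face is bordered by a non-backtracking cycle of length four.

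I then verify conversely that every square of $G$ is one of the faces listed above. A square through $\alpha$ has the form $\alpha \iota_{2j} v \iota_{2k} \alpha$ with $v$ a common neighbour of $\iota_{2j}$ and $\iota_{2k}$ distinct from $\alpha$; since $N_G(\iota_{2j}) = \{a_{2j}, \iota_{2j-1}, \iota_{2j+1}, \alpha\}$, the only possibility is $2k \equiv 2j \pm 2 \pmod{n}$ and $v = \iota_{2j \pm 1}$, recovering a wedge face. A square avoiding $\alpha$ lies in the subgraph induced by $\{a_i\} \cup \{\iota_i\}$, where each $a_i$ (resp.\ $\iota_i$) has the unique cross-layer neighbour $\iota_i$ (resp.\ $a_i$); since $n \geq 6$ neither $n$-cycle alone contains a $4$-cycle, a short case analysis on how many vertices of each type appear in the square forces it to equal $a_i a_{i+1} \iota_{i+1} \iota_i$. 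Finally the neighbourhood condition is immediate, since each $\iota_i$ has the unique boundary neighbour $a_i$ and $\alpha$ has none.

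The main obstacle is the classification of $4$-cycles, which must rule out any ``extraneous'' squares; this is tightest at $n = 6$, where the three distinct pairs among $\iota_0, \iota_2, \iota_4$ are all at distance exactly two in the inner cycle, but each such pair produces exactly one wedge face and the count $3 = m$ is still correct.
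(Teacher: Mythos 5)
Your construction is exactly the paper's (Figure~\ref{fig:quadrangulation.11}): inner $n$-cycle on $\iota_0,\dots,\iota_{n-1}$, radial edges $a_i\iota_i$, and a center $\alpha$ spoked to the even $\iota$'s; your planar embedding, Euler count, and classification of the $4$-cycles simply spell out the details the paper dismisses as ``straightforward.'' The argument is correct.
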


\begin{proof}
    Denote by $a_1, \ldots , a_n$ the vertices of $H$ so that edges are exactly $(a_k , a_{k+1})$ for $k \in \mathbb Z / n\mathbb Z$.    
    Define $G$ from $H$ by adding: vertices $\iota_k$ for $k \in \mathbb Z / n\mathbb Z$; an edge between $a_k$ and $\iota_k$ and an edge between $\iota_k$ and $\iota_{k+1}$ for each $k$; a vertex $\alpha$ and an edge between $\alpha$ and $\iota_k$ for each even $k$.     
    This definition is illustrated on Figure~\ref{fig:quadrangulation.11} for $n =8$. It is straightforward that $G$ satisfies the requirements.
\end{proof}

The following is straightforward:

\begin{lemma}\label{lemma:tree}
Every tree is a flat quadrangulation for all its planar embeddings. Furthermore, a flat quadrangulation is a tree if and only if it has no internal face.
\end{lemma}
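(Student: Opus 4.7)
The plan is to reduce both parts of the statement to a direct application of Euler's formula for planar graphs, exploiting the fact that trees contain no cycles (and a fortiori no squares).

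First I would handle the first part. Fix a tree $G$ with $|V_G| = n$ and any planar embedding. Since trees have no cycles, they have no squares at all, so the condition "every square of $G$ is the border of some internal face" is vacuously satisfied. It remains to show that there is no internal face at all (so the condition on borders of internal faces is also vacuous). Since $G$ is connected with $|E_G| = n-1$, Euler's formula $|V_G| - |E_G| + F = 2$ yields $F = 1$, so the unique face is the external face and $G$ has no internal face. Therefore $G$ is a flat quadrangulation.

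Next I would prove the second part. The forward direction follows immediately from what was just shown: a tree has no internal face in any planar embedding. For the converse, suppose $G$ is a flat quadrangulation whose given planar embedding has no internal face, i.e.\ $F = 1$. Then Euler's formula gives $|E_G| = |V_G| - 1$, and since $G$ is connected by definition of a flat quadrangulation, this forces $G$ to be a tree.

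I expect no substantial obstacle: the argument is essentially bookkeeping with Euler's formula, combined with the observation that both defining conditions of a flat quadrangulation become vacuous once we know the graph has no cycles (hence no squares) and no internal faces. The only subtlety worth noting explicitly is that the definition of flat quadrangulation builds in connectedness, which is what allows Euler's formula to be applied without extra correction terms and allows us to conclude ``tree'' rather than merely ``forest'' in the converse direction.
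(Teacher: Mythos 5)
Your proof is correct, and it fills in a gap that the paper leaves implicit: the paper states this lemma with the single phrase ``The following is straightforward'' and gives no argument at all. The Euler's-formula route you take (a connected planar graph has $|V|-|E|+F=2$, so $F=1$ iff $|E|=|V|-1$ iff the graph is a tree, and trees have neither squares nor internal faces, making both clauses in the definition of flat quadrangulation vacuous) is exactly the kind of routine bookkeeping the authors presumably had in mind. One small bookkeeping caveat: in the paper's conventions $E_G$ contains both $(u,v)$ and $(v,u)$, so the Euler count should be done with unordered edges, as you implicitly do; and it is worth observing in passing that a flat quadrangulation can have neither self-loops nor multiple edges (either would create an internal face bordered by a cycle of length one or two, not a square), so the classical Euler's formula applies without further qualification.
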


\begin{notation}\label{notation.remove.edge}
    Let $G$ be a graph, and $(v,w)$ an edge of $G$. We denote by $G \setminus (v,w)$ the graph obtained from $G$ by removing the edges $(v,w)$ and $(w,v)$. This is illustrated in Figure \ref{fig:remove.edge}.
\end{notation}

\begin{figure}[ht!]
    \centering
    \begin{tikzpicture}[scale=0.3]
\begin{scope}[yshift=-10cm]
    \draw (0,0) -- (0.5,0.866) -- (0,2) -- (-0.5,0.866) --(0,0) -- (-1,0) -- (-1.732,-1) -- (-0.5,-0.866) -- (0,0) -- (0.5,-0.866) -- (1.732,-1) -- (1,0) -- (0,0);
\draw (0.5,0.866) -- (1.732,1) -- (1,0);
\draw (-0.5,0.866) -- (-1.732,1) -- (-1,0);
\draw (-0.5,-0.866) -- (0,-2) -- (0.5,-0.866);
\draw (0,2) -- (1.366,2.366) -- (0,4) -- (-1.366,2.366) -- (0,2);
\draw (0,-2) -- (1.366,-2.366) -- (0,-4) -- (-1.366,-2.366) -- (0,-2);
\draw (1.732,1) -- (1.366,2.366) -- (3.464,2) -- (2.732,0) -- (1.732,1); 
\draw (-1.732,1) -- (-2.732,0) -- (-3.464,2) -- (-1.366,2.366) -- (-1.732,1);
\draw (1.732,-1) -- (2.732,0) -- (3.464,-2) -- (1.366,-2.366) -- (1.732,-1);
\draw (-1.732,-1) -- (-2.732,0) -- (-3.464,-2) -- (-1.366,-2.366) -- (-1.732,-1);

\node[scale=0.8] at (-3.732,2.5) {$\boldsymbol{w}$};
\node[scale=0.8] at (-1.832,2.75) {$\boldsymbol{v}$};

\draw[-latex,dashed] (4.732,0) -- (7.732,0);

\draw[fill=gray] (1.366,2.366) circle (3pt);
\draw[fill=gray] (-1.366,2.366) circle (3pt);
\draw[fill=gray] (1.366,-2.366) circle (3pt);
\draw[fill=gray] (2.732,0) circle (3pt);
\draw[fill=gray] (-2.732,0) circle (3pt);
\draw[fill=gray] (-1.366,-2.366) circle (3pt);

\draw[fill=gray] (0,0) circle (3pt);
\draw[fill=gray] (0,2) circle (3pt);
\draw[fill=gray] (0,-2) circle (3pt);
\draw[fill=gray] (1.732,1) circle (3pt);
\draw[fill=gray] (1.732,-1) circle (3pt);
\draw[fill=gray] (-1.732,1) circle (3pt);
\draw[fill=gray] (-1.732,-1) circle (3pt);

\draw[fill=gray] (0,4) circle (3pt);
\draw[fill=gray] (0,-4) circle (3pt);
\draw[fill=gray] (3.464,2) circle (3pt);
\draw[fill=gray] (3.464,-2) circle (3pt);
\draw[fill=gray] (-3.464,2) circle (3pt);
\draw[fill=gray] (-3.464,-2) circle (3pt);
\draw[fill=gray] (0.5,0.866) circle (3pt);
\draw[fill=gray] (1,0) circle (3pt);
\draw[fill=gray] (0.5,-0.866) circle (3pt);
\draw[fill=gray] (-0.5,0.866) circle (3pt);
\draw[fill=gray] (-1,0) circle (3pt);
\draw[fill=gray] (-0.5,-0.866) circle (3pt);

\begin{scope}[xshift=12.5cm]
    \draw (0,0) -- (0.5,0.866) -- (0,2) -- (-0.5,0.866) --(0,0) -- (-1,0) -- (-1.732,-1) -- (-0.5,-0.866) -- (0,0) -- (0.5,-0.866) -- (1.732,-1) -- (1,0) -- (0,0);
\draw (0.5,0.866) -- (1.732,1) -- (1,0);
\draw (-0.5,0.866) -- (-1.732,1) -- (-1,0);
\draw (-0.5,-0.866) -- (0,-2) -- (0.5,-0.866);
\draw (0,2) -- (1.366,2.366) -- (0,4) -- (-1.366,2.366) -- (0,2);
\draw (0,-2) -- (1.366,-2.366) -- (0,-4) -- (-1.366,-2.366) -- (0,-2);
\draw (1.732,1) -- (1.366,2.366) -- (3.464,2) -- (2.732,0) -- (1.732,1); 
\draw (-1.732,1) -- (-2.732,0) -- (-3.464,2);
\draw (-1.366,2.366) -- (-1.732,1);
\draw (1.732,-1) -- (2.732,0) -- (3.464,-2) -- (1.366,-2.366) -- (1.732,-1);
\draw (-1.732,-1) -- (-2.732,0) -- (-3.464,-2) -- (-1.366,-2.366) -- (-1.732,-1);

\draw[fill=gray] (1.366,2.366) circle (3pt);
\draw[fill=gray] (-1.366,2.366) circle (3pt);
\draw[fill=gray] (1.366,-2.366) circle (3pt);
\draw[fill=gray] (2.732,0) circle (3pt);
\draw[fill=gray] (-2.732,0) circle (3pt);
\draw[fill=gray] (-1.366,-2.366) circle (3pt);

\draw[fill=gray] (0,0) circle (3pt);
\draw[fill=gray] (0,2) circle (3pt);
\draw[fill=gray] (0,-2) circle (3pt);
\draw[fill=gray] (1.732,1) circle (3pt);
\draw[fill=gray] (1.732,-1) circle (3pt);
\draw[fill=gray] (-1.732,1) circle (3pt);
\draw[fill=gray] (-1.732,-1) circle (3pt);

\draw[fill=gray] (0,4) circle (3pt);
\draw[fill=gray] (0,-4) circle (3pt);
\draw[fill=gray] (3.464,2) circle (3pt);
\draw[fill=gray] (3.464,-2) circle (3pt);
\draw[fill=gray] (-3.464,2) circle (3pt);
\draw[fill=gray] (-3.464,-2) circle (3pt);
\draw[fill=gray] (0.5,0.866) circle (3pt);
\draw[fill=gray] (1,0) circle (3pt);
\draw[fill=gray] (0.5,-0.866) circle (3pt);
\draw[fill=gray] (-0.5,0.866) circle (3pt);
\draw[fill=gray] (-1,0) circle (3pt);
\draw[fill=gray] (-0.5,-0.866) circle (3pt);
\end{scope}
\end{scope}

\end{tikzpicture}
    \caption{Illustration of Notation \ref{notation.remove.edge}. The graph on the right is $G \backslash (v,w)$, where $G$ is the graph on its left.}
    \label{fig:remove.edge}
\end{figure}
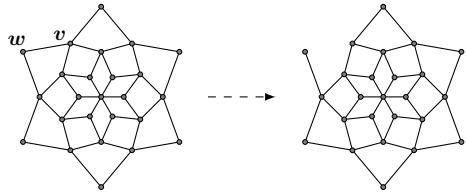

\begin{lemma}\label{lem:reduction:hoph}
    Consider a flat quadrangulation $G$ with fixed planar embedding, and $(v,w)$ an edge which belongs to the border of an external face and the border of an internal face. Then $G \setminus (v,w)$ is a flat quadrangulation, has one less internal face than $G$ and \[\pi_1^\square(G \setminus (v,w) ) \cong \pi_1^\square(G).\]
\end{lemma}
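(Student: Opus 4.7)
The plan is to establish the three assertions in turn: connectivity of $G \setminus (v,w)$, preservation of the flat quadrangulation structure, and the isomorphism of square groups, which is the substantive part.

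\textbf{Connectivity and flat quadrangulation structure.} First I would observe that $(v,w)$ is not a bridge: the internal face containing it is bounded by a square $s = vwxyv$, so $v \to y \to x \to w$ is an alternative walk in $G \setminus (v,w)$. Hence $G \setminus (v,w)$ is connected. The planar embedding of $G$ restricts to an embedding of $G \setminus (v,w)$; removing $(v,w)$ merges the internal face bounded by $s$ with the external face into a single unbounded face, while all other faces of $G$ are unaffected. Thus $G \setminus (v,w)$ has exactly one fewer internal face, and each remaining internal face is still bounded by a square of $G \setminus (v,w)$. Conversely, any square of $G \setminus (v,w)$ is a square of $G$ that avoids the edge $(v,w)$, hence bounds an internal face of $G$ distinct from the merged one, and that face persists in $G \setminus (v,w)$. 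This shows that $G \setminus (v,w)$ is a flat quadrangulation with one fewer internal face.

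\textbf{Setting up the two presentations.} To compare the square groups, choose any spanning tree $T$ of $G \setminus (v,w)$; by connectivity such $T$ exists, and it is automatically a spanning tree of $G$ that does not contain $(v,w)$. By Theorem \ref{thm:REforSquare},
\[\pi_1^{\square}(G) \cong \langle E_G : R^{\square}_T(G)\rangle, \qquad \pi_1^{\square}(G \setminus (v,w)) \cong \langle E_{G \setminus (v,w)} : R^{\square}_T(G \setminus (v,w))\rangle.\]
The extra material in the first presentation compared to the second consists of the two generators $(v,w)$ and $(w,v)$, the backtrack relation $(v,w)(w,v)$, and the relations contributed by squares of $G$ that use the edge $(v,w)$ or its reverse. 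By the flat quadrangulation property each square of $G$ bounds an internal face; since $(v,w)$ borders exactly one internal face (the other adjacent face being external), the only unoriented square of $G$ containing $(v,w)$ is $s$, and it contributes the eight cyclic and reverse orientations of $s$ as relations.

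\textbf{Eliminating the extra generators.} I would then apply Lemma \ref{lemma.mod.presentation}(1) twice. First, use the relation $(v,w)(w,x)(x,y)(y,v) = 1$ coming from $s$ to remove the generator $(v,w)$, substituting $(v,w) = (y,v)^{-1}(x,y)^{-1}(w,x)^{-1}$ throughout. Second, apply the lemma again to the rewritten backtrack relation to remove $(w,v)$. It then remains to verify that the seven other oriented forms of $s$, after these substitutions, reduce in the free group to products of backtrack pairs already present in $R^{\square}_T(G \setminus (v,w))$, so that they can be dropped via Lemma \ref{lemma.mod.presentation}(2). This is a straightforward calculation, as each such form is a cyclic rotation or reversal of the relation used for the substitution. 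Once these trivial relations are discarded, the resulting presentation coincides with $\langle E_{G \setminus (v,w)} : R^{\square}_T(G \setminus (v,w))\rangle$, giving the required isomorphism.

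\textbf{Main obstacle.} The principal point to verify carefully is that $s$ is the only unoriented square of $G$ containing the edge $(v,w)$. This relies on the flat quadrangulation hypothesis combined with the assumption that the other face adjacent to $(v,w)$ is the external one; without the latter, $(v,w)$ could belong to two internal-face squares, producing an additional independent relation that the substitution scheme would fail to trivialise. The remaining bookkeeping, confirming that the eight orientations of $s$ all collapse to the identity after the two substitutions, is routine but requires attention.
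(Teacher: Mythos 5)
Your proof is correct and follows essentially the same route as the paper's: establish that $G\setminus(v,w)$ is a connected planar flat quadrangulation with one fewer internal face, pick a spanning tree of $G\setminus(v,w)$, and apply Lemma~\ref{lemma.mod.presentation} to eliminate the generator $(v,w)$ (and its reverse) together with the relations arising from the unique square through that edge. The paper's proof is terser — it records the uniqueness of the square containing $(v,w)$ and dismisses the orientation bookkeeping with a remark "up to symmetry" — whereas you spell out the role of the flat quadrangulation hypothesis in forcing that square to be unique and verify that the remaining oriented copies of $s$ fall into the normal closure of the backtrack relations, so that Lemma~\ref{lemma.mod.presentation}(2) removes them. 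This is a useful elaboration of the same argument rather than a different one.
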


\begin{proof}
It is clear that $G \setminus (v,w)$ is still planar. It is connected because $(v,w)$ belongs to the border of an internal face of $G$. When removing $(v,w)$ from $G$, the internal face is merged into the external face, which means that $G \setminus (v,w)$ has one less internal face than $G$. 
The edge $(v,w)$ belonged to a unique square in $G$, which was the border of the merged internal face. Removing an edge cannot create any square and other internal faces are not affected, thus $G \setminus (v,w)$ is a flat quadrangulation.

Let $T$ be a spanning tree of $G \setminus (v,w)$ (and hence of $G$). The set $R^{\square}_T(G\setminus (v,w))$ is obtained from $R^{\square}_T(G)$ by removing all relations which contain $(v,w)$. The generator $(v,w)$ appears, up to symmetry, in two relations: $(v,w)(w,v)$ and the relation corresponding to the internal face. Applying Lemma \ref{lemma.mod.presentation}, these relations become trivial and other relations are not affected. Therefore: 
\[\langle E_{G}:R^{\square}_T(G)\rangle = \langle E_G\setminus \{(v,w),(w,v)\}:R^{\square}_T(G\setminus (v,w))\rangle.\qedhere\]
\end{proof}

\begin{theorem}\label{lemma: quadrangulation of cycles}
If $G$ is a flat quadrangulation then $\pi^{\square}_1(G)$ is trivial.
\end{theorem}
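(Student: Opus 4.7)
The plan is to argue by induction on the number of internal faces in a fixed planar embedding of $G$ witnessing that $G$ is a flat quadrangulation. The key is that each induction step removes one internal face while preserving the square group, using Lemma~\ref{lem:reduction:hoph}.

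\textbf{Base case.} If the embedding has no internal faces, Lemma~\ref{lemma:tree} gives that $G$ is a tree. A tree contains no non-backtracking cycle of positive length, so $\pi_1(G)$ is trivial, and therefore so is its quotient $\pi_1^{\square}(G)$.

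\textbf{Inductive step.} Suppose $G$ has at least one internal face. I claim there exists an edge $(v,w)\in E_G$ that lies simultaneously on the border of some internal face and on the border $\partial G$ of the external face. Granting this, Lemma~\ref{lem:reduction:hoph} shows that $G\setminus(v,w)$ is again a flat quadrangulation, has exactly one fewer internal face, and satisfies $\pi_1^{\square}(G\setminus(v,w))\cong \pi_1^{\square}(G)$. The inductive hypothesis applied to $G\setminus(v,w)$ then gives $\pi_1^{\square}(G)$ trivial.

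\textbf{Existence of the edge $(v,w)$.} Fixing the planar embedding, let $U\subset \mathbb R^2$ be the union of the (topological) closures of all internal faces. Then $U$ is a nonempty compact set, and its complement in $\mathbb R^2$ is open, unbounded, and contains the external face; in particular $\partial U\neq \varnothing$. In a planar embedding, $\partial U$ is a union of edges of $G$ together with their endpoints. Any edge appearing in $\partial U$ lies in the closure of some internal face (so it borders that internal face) and also has points arbitrarily close to the complement of $U$, whose only adjacent face among the faces of $G$ is the external face. Hence such an edge lies on the border of both an internal face and the external face, as required.

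\textbf{Main obstacle.} The only non-mechanical step is this planar-topology argument for the existence of a suitable edge; everything else is a direct application of Lemma~\ref{lem:reduction:hoph} and Lemma~\ref{lemma:tree}. The rest of the proof is a clean induction on the number of internal faces.
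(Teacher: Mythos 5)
Your proof is correct and follows the same inductive scheme as the paper's: induction on the number of internal faces, with the base case handled by Lemma~\ref{lemma:tree} and the inductive step by Lemma~\ref{lem:reduction:hoph}. You additionally spell out the plane-topology argument for why, when there is at least one internal face, some edge lies on the border of both an internal face and the external face — a step the paper's proof uses implicitly when invoking Lemma~\ref{lem:reduction:hoph}.
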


\begin{proof}
If $G$ has no internal face, then $G$ is a tree, hence $\pi^{\square}_1(G)$ is trivial by definition. If $G$ has $k\geq 1$ internal faces, apply Lemma~\ref{lem:reduction:hoph} to obtain a graph with one less internal face whose square group is isomorphic to the square group of $G$. The result follows by induction.
\end{proof}

\subsection{Proof of Theorem \ref{thm:everygroup}\label{section.proof.realization}}

In this section, we provide a proof of Theorem \ref{thm:everygroup}. To do this, we build a family of graphs from a finite group presentation $(E,R)$ (this notation is introduced in Section \ref{section.background.group.presentations}) such that the square group of any element in this family is isomorphic to $\langle E : R \rangle$ (Theorem \ref{thm:realization}). Furthermore, we provide an algorithm which, provided the presentation $(E,R)$, outputs an element of this family (Proposition \ref{rem:realization}).

The following construction is illustrated in Figure~\ref{fig:illustration.realization.simple}.
We have chosen to leave as much freedom as possible in some choices of parameters, even though this is not necessary to prove Theorem \ref{thm:everygroup}, so that the construction can be more flexible for future work. \bigskip

Let $(E,R)$ be a finite group presentation. If necessary, reduce the presentation: For all $e\in E$ we recursively remove occurrences of $ee^{-1}$ and $e^{-1}e$ from every relation in $R$ and we remove from $E$ any $e\in E\cap R$ (as well as all occurrences of $e$ in relations of $R$), and empty relations. We start the construction of the associated undirected graph, so, whenever we present a construction of a graph, the reader should assume that the reverse edges are present.

\begin{enumerate}
\item Pick an integer $N\geq 6$. For each $a\in E$, define $C^{\omega}_{a,N}$ the $N$-cycle with a base vertex named $\omega$:
    \begin{itemize}
        \item $V_{C^{\omega}_{a,n}} = \{\omega\} \cup (\{a\} \times (\mathbb{Z}/N\mathbb{Z} \backslash \{0\}))$ and 
        \item $E_{C^{\omega}_{a,n}}$ consists of $\{((a,k),(a,k + 1)) : k \in \mathbb{Z}/N\mathbb{Z} \backslash \{0,-1\}\}$, plus $(\omega, (a,1))$ and $((a,-1),\omega)$. 
    \end{itemize}
Denote by $C^{\omega}_{E,N}$ the union of all graphs $C^{\omega}_{a,N}$ for $a \in E$. 

\item For every relation $r$, fix $M_r = N \cdot |r|$ where $|r|$ is the length of the relation seen as a word. Denote by $C_{r,M_r}$ the $M_r$-cycle without base vertex:
    \begin{itemize}
        \item $V_{C_{r,M_r}} = \{r\} \times \mathbb{Z}/M_r\mathbb{Z}$;
        \item $E_{C_{r,M_r}} = \left\{((r,k),(r,k + 1)) : k \in \mathbb{Z}/M_r\mathbb{Z}\right\}$.
    \end{itemize}

\item For every $r  = a_1^{\varepsilon_1} \ldots a_{|r|}^{\varepsilon_{|r|}} \in R$ with $a_i \in E$ and $\varepsilon_i = \pm 1$ for all $i$, denote by $\phi_r : C_{r,M_r} \rightarrow C^{\omega}_{E,N}$ the graph homomorphism such that for all $k$, 
\[\phi_r(r,k) =\begin{cases}  \omega&\text{ if }k\bmod N = 0,\\ 
(a_{\lceil k/N\rceil}, k\bmod N)& \text{ if }\varepsilon_{\lceil k/N\rceil} = +1\text{ and }k\bmod N\neq 0,\\
 (a_{\lceil k/N\rceil}, -k\bmod N) &\text{ if }\varepsilon_{\lceil k/N\rceil} = -1 \text{ and }k\bmod N\neq 0.
 \end{cases}
\]

For every $r \in R$, denote by $\mathcal{H}_r(E,R,N,\boldsymbol{\nu})$ the graph obtained from the union of $C^{\omega}_{E,N}$ with $C_{r,M_r}$ by adding a sequence of edges $\upsilon_r = (\upsilon_{r,k})_{k\leq M_r}$ as follows.

The edge $\upsilon_{r,k}$ is either an edge from $(r,k)$ to $\phi_r(r,k)$, or the diagonal edge from $(r,k+1)$ to $\phi_r(r,k-1)$. Formally, pick $\boldsymbol{\nu} = (\nu_r)_{r \in R}$ a sequence such that $\nu_r \in \{0,1\}^{\Z/M_r\Z}$ for all $r$, and for all $k\in \Z/M_r\Z$, $\upsilon_{r,k}$ is an edge between $(r, k+\nu_{r,k})$ and $\phi_r(r, k - \nu_{r,k})$.

Notice that the sequences $\upsilon_r$ cannot contain multiple copies of the same edge.
This definition is illustrated on Figure \ref{fig:illustration.realization}.

 \item Pick $\textbf{F} = (F_r)_{r \in R}$ a sequence of graphs that are all pairwise disjoint, have trivial square group, contain $C_{r,M_r}$, and do not intersect $C^{\omega}_{E,N}$. 
\end{enumerate}

 Denote by $\mathcal{G}_r(E,R,N,\textbf{F},\boldsymbol{\nu})$ the graph $F_r \cup \mathcal{H}_r(E,R,N,\boldsymbol{\nu})$. The final graph is: 
\[\mathcal{G}(E,R,N,\textbf{F},\boldsymbol{\nu}) \coloneqq \bigcup_{r \in R} \mathcal{G}_r(E,R,N,\textbf{F},\boldsymbol{\nu}).\]

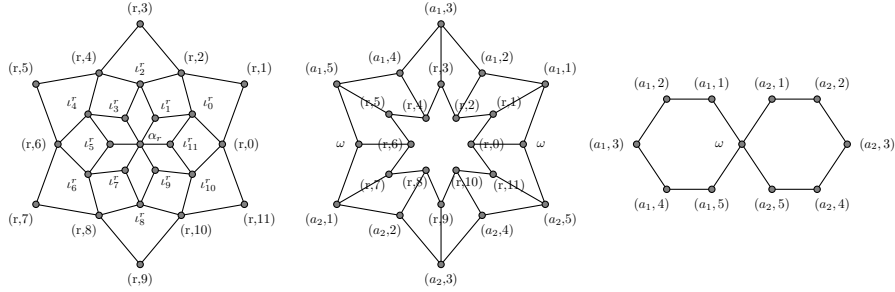
\begin{figure}[ht!]
    \centering
    \begin{tikzpicture}[scale=0.4]
\draw (0.5,0.866) -- (0,2) -- (-0.5,0.866);
\draw (-1,0) -- (-1.732,-1) -- (-0.5,-0.866);
\draw (0.5,-0.866) -- (1.732,-1) -- (1,0);
\draw (0.5,0.866) -- (1.732,1) -- (1,0);
\draw (-0.5,0.866) -- (-1.732,1) -- (-1,0);
\draw (-0.5,-0.866) -- (0,-2) -- (0.5,-0.866);
\draw (0,2) -- (0,4);
\draw (0,-2) -- (0,-4);
\draw (1.366,2.366) -- (0.5,0.866);
\draw (1.366,-2.366) -- (0.5,-0.866);
\draw (-1.366,2.366) -- (-0.5,0.866);
\draw (-1.366,-2.366) -- (-0.5,-0.866);
\draw (1,0) -- (2.732,0);
\draw (-1,0) -- (-2.732,0);
\draw (3.464,2) -- (1.732,1);
\draw (3.464,-2) -- (1.732,-1);
\draw (-3.464,2) -- (-1.732,1);
\draw (-3.464,-2) -- (-1.732,-1);
\draw (1.366,2.366)-- (0,4) -- (-1.366,2.366);
\draw (1.366,-2.366) -- (0,-4) -- (-1.366,-2.366);
\draw (1.366,2.366) -- (3.464,2) -- (2.732,0);
\draw (-2.732,0) -- (-3.464,2) -- (-1.366,2.366);
\draw (2.732,0) -- (3.464,-2) -- (1.366,-2.366);
\draw (-2.732,0) -- (-3.464,-2) -- (-1.366,-2.366);

\draw[fill=gray] (1.366,2.366) circle (3pt);
\draw[fill=gray] (-1.366,2.366) circle (3pt);
\draw[fill=gray] (1.366,-2.366) circle (3pt);
\draw[fill=gray] (2.732,0) circle (3pt);
\draw[fill=gray] (-2.732,0) circle (3pt);
\draw[fill=gray] (-1.366,-2.366) circle (3pt);

\draw[fill=gray] (0,2) circle (3pt);
\draw[fill=gray] (0,-2) circle (3pt);
\draw[fill=gray] (1.732,1) circle (3pt);
\draw[fill=gray] (1.732,-1) circle (3pt);
\draw[fill=gray] (-1.732,1) circle (3pt);
\draw[fill=gray] (-1.732,-1) circle (3pt);

\draw[fill=gray] (0,4) circle (3pt);
\draw[fill=gray] (0,-4) circle (3pt);
\draw[fill=gray] (3.464,2) circle (3pt);
\draw[fill=gray] (3.464,-2) circle (3pt);
\draw[fill=gray] (-3.464,2) circle (3pt);
\draw[fill=gray] (-3.464,-2) circle (3pt);
\draw[fill=gray] (0.5,0.866) circle (3pt);
\draw[fill=gray] (1,0) circle (3pt);
\draw[fill=gray] (0.5,-0.866) circle (3pt);
\draw[fill=gray] (-0.5,0.866) circle (3pt);
\draw[fill=gray] (-1,0) circle (3pt);
\draw[fill=gray] (-0.5,-0.866) circle (3pt);

\begin{scope}[xshift=-10cm]
    \draw (0,0) -- (0.5,0.866) -- (0,2) -- (-0.5,0.866) --(0,0) -- (-1,0) -- (-1.732,-1) -- (-0.5,-0.866) -- (0,0) -- (0.5,-0.866) -- (1.732,-1) -- (1,0) -- (0,0);
\draw (0.5,0.866) -- (1.732,1) -- (1,0);
\draw (-0.5,0.866) -- (-1.732,1) -- (-1,0);
\draw (-0.5,-0.866) -- (0,-2) -- (0.5,-0.866);
\draw (0,2) -- (1.366,2.366) -- (0,4) -- (-1.366,2.366) -- (0,2);
\draw (0,-2) -- (1.366,-2.366) -- (0,-4) -- (-1.366,-2.366) -- (0,-2);
\draw (1.732,1) -- (1.366,2.366) -- (3.464,2) -- (2.732,0) -- (1.732,1); 
\draw (-1.732,1) -- (-2.732,0) -- (-3.464,2) -- (-1.366,2.366) -- (-1.732,1);
\draw (1.732,-1) -- (2.732,0) -- (3.464,-2) -- (1.366,-2.366) -- (1.732,-1);
\draw (-1.732,-1) -- (-2.732,0) -- (-3.464,-2) -- (-1.366,-2.366) -- (-1.732,-1);

\draw[fill=gray] (1.366,2.366) circle (3pt);
\draw[fill=gray] (-1.366,2.366) circle (3pt);
\draw[fill=gray] (1.366,-2.366) circle (3pt);
\draw[fill=gray] (2.732,0) circle (3pt);
\draw[fill=gray] (-2.732,0) circle (3pt);
\draw[fill=gray] (-1.366,-2.366) circle (3pt);

\draw[fill=gray] (0,0) circle (3pt);
\draw[fill=gray] (0,2) circle (3pt);
\draw[fill=gray] (0,-2) circle (3pt);
\draw[fill=gray] (1.732,1) circle (3pt);
\draw[fill=gray] (1.732,-1) circle (3pt);
\draw[fill=gray] (-1.732,1) circle (3pt);
\draw[fill=gray] (-1.732,-1) circle (3pt);

\draw[fill=gray] (0,4) circle (3pt);
\draw[fill=gray] (0,-4) circle (3pt);
\draw[fill=gray] (3.464,2) circle (3pt);
\draw[fill=gray] (3.464,-2) circle (3pt);
\draw[fill=gray] (-3.464,2) circle (3pt);
\draw[fill=gray] (-3.464,-2) circle (3pt);
\draw[fill=gray] (0.5,0.866) circle (3pt);
\draw[fill=gray] (1,0) circle (3pt);
\draw[fill=gray] (0.5,-0.866) circle (3pt);
\draw[fill=gray] (-0.5,0.866) circle (3pt);
\draw[fill=gray] (-1,0) circle (3pt);
\draw[fill=gray] (-0.5,-0.866) circle (3pt);

\node[scale=0.5] at (3.964,-2.5) {(r,11)};
\node[scale=0.5] at (3.532,0) {(r,0)};
\node[scale=0.5] at (3.964,2.5) {(r,1)};
\node[scale=0.5] at (1.866,2.866) {(r,2)};
\node[scale=0.5] at (0,4.5) {(r,3)};
\node[scale=0.5] at (-1.866,2.866) {(r,4)};
\node[scale=0.5] at (-3.964,2.5) {(r,5)};
\node[scale=0.5] at (-3.532,0) {(r,6)};
\node[scale=0.5] at (-3.964,-2.5) {(r,7)};
\node[scale=0.5] at (-1.866,-2.866) {(r,8)};
\node[scale=0.5] at (0,-4.5) {(r,9)};
\node[scale=0.5] at (1.866,-2.866) {(r,10)};

\node[scale=0.5] at (0,2.5) {$\iota^r_2$};
\node[scale=0.5] at (0,-2.5) {$\iota^r_8$};
\node[scale=0.5] at (1.682,0) {$\iota^r_{11}$};
\node[scale=0.5] at (-1.682,0) {$\iota^r_5$};
\node[scale=0.5] at (2.266,-1.366) {$\iota^r_{10}$};
\node[scale=0.5] at (2.266,1.366) {$\iota^r_0$};
\node[scale=0.5] at (-2.266,1.366) {$\iota^r_4$};
\node[scale=0.5] at (-2.266,-1.366) {$\iota^r_6$};
\node[scale=0.5] at (0.866,1.266) {$\iota^r_1$};
\node[scale=0.5] at (-0.866,1.266) {$\iota^r_3$};
\node[scale=0.5] at (-0.866,-1.266) {$\iota^r_7$};
\node[scale=0.5] at (0.866,-1.266) {$\iota^r_9$};

\node[scale=0.5] at (0.5,0.25) {$\alpha_r$};
\end{scope}

\begin{scope}[xshift=10cm]
\draw (0,0) -- (1,1.5) -- (2.5,1.5) -- (3.5,0) -- (2.5,-1.5) -- (1,-1.5) -- (0,0); 
\draw (0,0) -- (-1,1.5) -- (-2.5,1.5) -- (-3.5,0) -- (-2.5,-1.5) -- (-1,-1.5) -- (0,0); 
    \draw[fill=gray] (0,0) circle (3pt);
    \draw[fill=gray] (-3.5,0) circle (3pt);
    \draw[fill=gray] (3.5,0) circle (3pt);
    \draw[fill=gray] (1,1.5) circle (3pt);
    \draw[fill=gray] (2.5,1.5) circle (3pt);
    \draw[fill=gray] (1,-1.5) circle (3pt);
    \draw[fill=gray] (2.5,-1.5) circle (3pt);
        \draw[fill=gray] (-1,1.5) circle (3pt);
    \draw[fill=gray] (-2.5,1.5) circle (3pt);
    \draw[fill=gray] (-1,-1.5) circle (3pt);
    \draw[fill=gray] (-2.5,-1.5) circle (3pt);
    \node[scale=0.5] at (-0.75,0) {$\omega$};
    \node[scale=0.5] at (-1,2) {$(a_1,1)$};
    \node[scale=0.5] at (-3,2) {$(a_1,2)$};
    \node[scale=0.5] at (-1,-2) {$(a_1,5)$};
    \node[scale=0.5] at (-3,-2) {$(a_1,4)$};
    \node[scale=0.5] at (-4.5,0) {$(a_1,3)$};

    \node[scale=0.5] at (1,2) {$(a_2,1)$};
    \node[scale=0.5] at (3,2) {$(a_2,2)$};
    \node[scale=0.5] at (1,-2) {$(a_2,5)$};
    \node[scale=0.5] at (3,-2) {$(a_2,4)$};
    \node[scale=0.5] at (4.5,0) {$(a_2,3)$};
\end{scope}

\node[scale=0.5] at (3.964,-2.5) {($a_2$,5)};
\node[scale=0.5] at (3.332,0) {$\omega$};
\node[scale=0.5] at (3.964,2.5) {($a_1$,1)};
\node[scale=0.5] at (1.866,2.866) {($a_1$,2)};
\node[scale=0.5] at (0,4.5) {($a_1$,3)};
\node[scale=0.5] at (-1.866,2.866) {($a_1$,4)};
\node[scale=0.5] at (-3.964,2.5) {($a_1$,5)};
\node[scale=0.5] at (-3.332,0) {$\omega$};
\node[scale=0.5] at (-3.964,-2.5) {($a_2$,1)};
\node[scale=0.5] at (-1.866,-2.866) {($a_2$,2)};
\node[scale=0.5] at (0,-4.5) {($a_2$,3)};
\node[scale=0.5] at (1.866,-2.866) {($a_2$,4)};

\node[scale=0.5] at (0,2.5) {(r,3)};
\node[scale=0.5] at (0,-2.5) {(r,9)};
\node[scale=0.5] at (1.682,0) {(r,0)};
\node[scale=0.5] at (-1.682,0) {(r,6)};
\node[scale=0.5] at (2.266,-1.366) {(r,11)};
\node[scale=0.5] at (2.266,1.366) {(r,1)};
\node[scale=0.5] at (-2.266,1.366) {(r,5)};
\node[scale=0.5] at (-2.266,-1.366) {(r,7)};
\node[scale=0.5] at (0.866,1.266) {(r,2)};
\node[scale=0.5] at (-0.866,1.266) {(r,4)};
\node[scale=0.5] at (-0.866,-1.266) {(r,8)};
\node[scale=0.5] at (0.866,-1.266) {(r,10)};
\end{tikzpicture}
    \caption{ Illustration for the definition of the graph $H_r$ when $E = \{a_1,a_2\}$ and $r = a_1 a_2$. The graph on the left is a possible choice for $F_r$. The graph on the right is $C_E$. The graph $H_r$ can be seen as the result of identifying vertices with the same name in the middle graph and the right graph.}
    \label{fig:illustration.realization.simple}
\end{figure}

\begin{figure}[ht!]
    \centering
    \begin{tikzpicture}[scale=0.5]
\draw (0.5,0.866) -- (0,2) -- (-0.5,0.866);
\draw (-1,0) -- (-1.732,-1) -- (-0.5,-0.866);
\draw (0.5,-0.866) -- (1.732,-1) -- (1,0);
\draw (0.5,0.866) -- (1.732,1) -- (1,0);
\draw (-0.5,0.866) -- (-1.732,1) -- (-1,0);
\draw (-0.5,-0.866) -- (0,-2) -- (0.5,-0.866);
\draw (0,2) -- (0,4);
\draw (0,-2) -- (0,-4);
\draw (1.366,2.366) -- (0.5,0.866);
\draw (1.366,-2.366) -- (0.5,-0.866);
\draw (-1.366,2.366) -- (-0.5,0.866);
\draw (-1.366,-2.366) -- (-0.5,-0.866);
\draw (1,0) -- (2.732,0);
\draw (-1,0) -- (-2.732,0);
\draw (3.464,2) -- (1.732,1);
\draw (3.464,-2) -- (1.732,-1);
\draw (-3.464,2) -- (-1.732,1);
\draw (-3.464,-2) -- (-1.732,-1);
\draw (1.366,2.366)-- (0,4) -- (-1.366,2.366);
\draw (1.366,-2.366) -- (0,-4) -- (-1.366,-2.366);
\draw (1.366,2.366) -- (3.464,2) -- (2.732,0);
\draw (-2.732,0) -- (-3.464,2) -- (-1.366,2.366);
\draw (2.732,0) -- (3.464,-2) -- (1.366,-2.366);
\draw (-2.732,0) -- (-3.464,-2) -- (-1.366,-2.366);

\draw[fill=gray] (1.366,2.366) circle (3pt);
\draw[fill=gray] (-1.366,2.366) circle (3pt);
\draw[fill=gray] (1.366,-2.366) circle (3pt);
\draw[fill=gray] (2.732,0) circle (3pt);
\draw[fill=gray] (-2.732,0) circle (3pt);
\draw[fill=gray] (-1.366,-2.366) circle (3pt);

\draw[fill=gray] (0,2) circle (3pt);
\draw[fill=gray] (0,-2) circle (3pt);
\draw[fill=gray] (1.732,1) circle (3pt);
\draw[fill=gray] (1.732,-1) circle (3pt);
\draw[fill=gray] (-1.732,1) circle (3pt);
\draw[fill=gray] (-1.732,-1) circle (3pt);

\draw[fill=gray] (0,4) circle (3pt);
\draw[fill=gray] (0,-4) circle (3pt);
\draw[fill=gray] (3.464,2) circle (3pt);
\draw[fill=gray] (3.464,-2) circle (3pt);
\draw[fill=gray] (-3.464,2) circle (3pt);
\draw[fill=gray] (-3.464,-2) circle (3pt);
\draw[fill=gray] (0.5,0.866) circle (3pt);
\draw[fill=gray] (1,0) circle (3pt);
\draw[fill=gray] (0.5,-0.866) circle (3pt);
\draw[fill=gray] (-0.5,0.866) circle (3pt);
\draw[fill=gray] (-1,0) circle (3pt);
\draw[fill=gray] (-0.5,-0.866) circle (3pt);

\node[scale=0.7] at (2.232,-0.325) {$\upsilon_{r, 0}$}; 
\node[scale=0.7] at (2.432,0.825) {$\upsilon_{r, 1}$}; 
\node[scale=0.7] at (1.582,1.625) {$\upsilon_{r, 2}$}; 
\node[scale=0.7] at (0.582,2.525) {$\upsilon_{r, 3}$}; 

\node[scale=0.7] at (1.982,-1.625) {$\upsilon_{r, 11}$}; 
\node[scale=0.7] at (0.782,-2.125) {$\upsilon_{r, 10}$};
\node[scale=0.7] at (-1.482,-1.625) {$\upsilon_{r, 8}$}; 
\node[scale=0.7] at (-0.582,-2.525) {$\upsilon_{r, 9}$}; 

\node[scale=0.7] at (-2.432,-0.925) {$\upsilon_{r, 7}$}; 
\node[scale=0.7] at (-2.132,0.325) {$\upsilon_{r, 6}$}; 
\node[scale=0.7] at (-1.982,1.625) {$\upsilon_{r, 5}$}; 
\node[scale=0.7] at (-0.682,2.125) {$\upsilon_{r, 4}$}; 

\begin{scope}[xshift=-12cm]
    \draw (0.5,0.866) -- (0,2) -- (-0.5,0.866);
    \draw (-1,0) -- (-1.732,-1) -- (-0.5,-0.866);
    \draw (0.5,-0.866) -- (1.732,-1) -- (1,0);
\draw (0.5,0.866) -- (1.732,1) -- (1,0);
\draw (-0.5,0.866) -- (-1.732,1) -- (-1,0);
\draw (-0.5,-0.866) -- (0,-2) -- (0.5,-0.866);
\draw (0,2) -- (1.366,2.366) -- (0,4) -- (-1.366,2.366) -- (0,2);
\draw (0,-2) -- (1.366,-2.366) -- (0,-4) -- (-1.366,-2.366) -- (0,-2);
\draw (1.732,1) -- (1.366,2.366) -- (3.464,2) -- (2.732,0) -- (1.732,1); 
\draw (-1.732,1) -- (-2.732,0) -- (-3.464,2) -- (-1.366,2.366) -- (-1.732,1);
\draw (1.732,-1) -- (2.732,0) -- (3.464,-2) -- (1.366,-2.366) -- (1.732,-1);
\draw (-1.732,-1) -- (-2.732,0) -- (-3.464,-2) -- (-1.366,-2.366) -- (-1.732,-1);

\draw[fill=gray] (1.366,2.366) circle (3pt);
\draw[fill=gray] (-1.366,2.366) circle (3pt);
\draw[fill=gray] (1.366,-2.366) circle (3pt);
\draw[fill=gray] (2.732,0) circle (3pt);
\draw[fill=gray] (-2.732,0) circle (3pt);
\draw[fill=gray] (-1.366,-2.366) circle (3pt);

\draw[fill=gray] (0,2) circle (3pt);
\draw[fill=gray] (0,-2) circle (3pt);
\draw[fill=gray] (1.732,1) circle (3pt);
\draw[fill=gray] (1.732,-1) circle (3pt);
\draw[fill=gray] (-1.732,1) circle (3pt);
\draw[fill=gray] (-1.732,-1) circle (3pt);

\draw[fill=gray] (0,4) circle (3pt);
\draw[fill=gray] (0,-4) circle (3pt);
\draw[fill=gray] (3.464,2) circle (3pt);
\draw[fill=gray] (3.464,-2) circle (3pt);
\draw[fill=gray] (-3.464,2) circle (3pt);
\draw[fill=gray] (-3.464,-2) circle (3pt);
\draw[fill=gray] (0.5,0.866) circle (3pt);
\draw[fill=gray] (1,0) circle (3pt);
\draw[fill=gray] (0.5,-0.866) circle (3pt);
\draw[fill=gray] (-0.5,0.866) circle (3pt);
\draw[fill=gray] (-1,0) circle (3pt);
\draw[fill=gray] (-0.5,-0.866) circle (3pt);

\node[scale=0.7] at (3.964,-2.5) {($a_2$,5)};
\node[scale=0.9] at (3.332,0) {$\omega$};
\node[scale=0.7] at (3.964,2.5) {($a_1$,1)};
\node[scale=0.7] at (1.866,2.866) {($a_1$,2)};
\node[scale=0.7] at (0,4.5) {($a_1$,3)};
\node[scale=0.7] at (-1.866,2.866) {($a_1$,4)};
\node[scale=0.7] at (-3.964,2.5) {($a_1$,5)};
\node[scale=0.9] at (-3.332,0) {$\omega$};
\node[scale=0.7] at (-3.964,-2.5) {($a_2$,1)};
\node[scale=0.7] at (-1.866,-2.866) {($a_2$,2)};
\node[scale=0.7] at (0,-4.5) {($a_2$,3)};
\node[scale=0.7] at (1.866,-2.866) {($a_2$,4)};

\node[scale=0.7] at (2.432,-0.825) {$\upsilon_{r, 0}$}; 
\node[scale=0.7] at (2.432,0.825) {$\upsilon_{r, 1}$}; 
\node[scale=0.7] at (1.982,1.625) {$\upsilon_{r, 2}$}; 
\node[scale=0.7] at (0.582,2.525) {$\upsilon_{r, 3}$}; 

\node[scale=0.7] at (1.982,-1.625) {$\upsilon_{r, 11}$}; 
\node[scale=0.7] at (0.582,-2.525) {$\upsilon_{r, 10}$};
\node[scale=0.7] at (-1.982,-1.625) {$\upsilon_{r, 8}$}; 
\node[scale=0.7] at (-0.582,-2.525) {$\upsilon_{r, 9}$}; 

\node[scale=0.7] at (-2.432,-0.825) {$\upsilon_{r, 7}$}; 
\node[scale=0.7] at (-2.432,0.825) {$\upsilon_{r, 6}$}; 
\node[scale=0.7] at (-1.982,1.625) {$\upsilon_{r, 5}$}; 
\node[scale=0.7] at (-0.582,2.525) {$\upsilon_{r, 4}$}; 
\end{scope}

\node[scale=0.7] at (3.964,-2.5) {($a_2$,5)};
\node[scale=0.9] at (3.332,0) {$\omega$};
\node[scale=0.7] at (3.964,2.5) {($a_1$,1)};
\node[scale=0.7] at (1.866,2.866) {($a_1$,2)};
\node[scale=0.7] at (0,4.5) {($a_1$,3)};
\node[scale=0.7] at (-1.866,2.866) {($a_1$,4)};
\node[scale=0.7] at (-3.964,2.5) {($a_1$,5)};
\node[scale=0.9] at (-3.332,0) {$\omega$};
\node[scale=0.7] at (-3.964,-2.5) {($a_2$,1)};
\node[scale=0.7] at (-1.866,-2.866) {($a_2$,2)};
\node[scale=0.7] at (0,-4.5) {($a_2$,3)};
\node[scale=0.7] at (1.866,-2.866) {($a_2$,4)};

\end{tikzpicture}
    \caption{Illustration of the effect of $\boldsymbol{\nu}$. The graph on the left corresponds to the choice $\nu_{r} = (0,1,0,1 \ldots)$ and the one on the right corresponds to the choice $\nu_{r} = (0,0,0,0 \ldots)$. 
     }
    \label{fig:illustration.realization}
\end{figure}
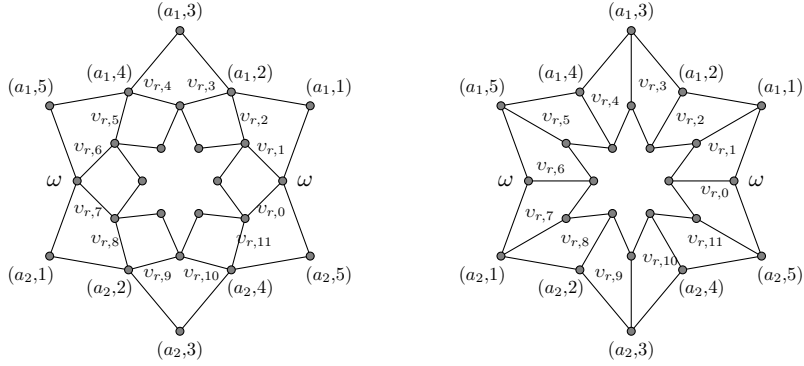

\begin{example}
The graph $G$ given as preliminary example in Figure~\ref{fig:Z3} corresponds to the result of this construction with $E = \{g\}$, $R = \{g^3\}$, $N = 6$, and $\nu = ((0)_{k\leq 18})$.
\end{example}

The next proposition shows that this construction can be done in an effective manner (in particular, we find an appropriate family for Step 4) and proves an additional property that is necessary for the main theorem. 

This explicit construction is sufficient for our purpose, but we left some degree of freedom in the choice of parameters for future work. We hope that other choices of parameters may yield homshifts with stronger mixing properties (constant block-gluing, strong irreducibility). We do not yet completely understand the relationship of these mixing properties with the properties of the graph.

\begin{proposition}
    \label{rem:realization}
There exists an algorithm which takes as input a finite group presentation $(E,R)$ and outputs a choice for $N,\textbf{F}$ and $\boldsymbol{\nu}$ that satisfy the conditions of the construction. Furthermore, every square of $\mathcal{G}(E,R,N,\textbf{F},\boldsymbol{\nu})$ is included in $\mathcal{H}_r(E,R,N,\textbf{F},\boldsymbol{\nu})$ or in $F_r$ for some $r$. 
\end{proposition}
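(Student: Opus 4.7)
The plan is to set $N = 6$, take each $F_r$ to be a flat quadrangulation of $C_{r,M_r}$ produced by Proposition~\ref{proposition.quadrangulating.cn} with internal vertices renamed so that the $F_r$'s are pairwise disjoint and disjoint from $C^{\omega}_{E,N}$, and set $\nu_{r,k} = 0$ for every $r \in R$ and every $k$. Since $M_r = 6|r|$ is even and at least $12$, Proposition~\ref{proposition.quadrangulating.cn} applies, and Theorem~\ref{lemma: quadrangulation of cycles} ensures each $F_r$ has trivial square group; the remaining requirements of Step~4 of the construction follow directly from the fresh-naming. This procedure is evidently algorithmic.

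For the second assertion, I would partition the edges of $\mathcal{G} \coloneqq \mathcal{G}(E,R,N,\textbf{F},\boldsymbol{\nu})$ into four types: (A) edges of $C^{\omega}_{E,N}$; (B) edges of some $C_{r,M_r}$; (C) the $\upsilon$-edges; and (D) the remaining ``internal'' edges of some $F_r$ (those not in $C_{r,M_r}$). Let $s = v_0 v_1 v_2 v_3 v_0$ be a square of $\mathcal{G}$, and argue by cases. If $s$ uses a type (D) edge, then one of its endpoints lies in $F_r \setminus C_{r,M_r}$ for some $r$; using that internal vertices of $F_r$ have all their neighbors in $F_r$ (by fresh-naming) and at most one neighbor in $C_{r,M_r}$ (by Proposition~\ref{proposition.quadrangulating.cn}), one checks that every vertex of $s$ lies in $F_r$, whence $s \subset F_r$. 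If $s$ uses only edges of types (A) and (B), then $s$ lies entirely in $C^{\omega}_{E,N}$ or entirely in some $C_{r,M_r}$; but $C^{\omega}_{E,N}$ is a union of $6$-cycles sharing only the vertex $\omega$, and each $C_{r,M_r}$ has length at least $12$, so neither contains a square, making this case vacuous.

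The main case is when $s$ uses at least one type (C) edge, say $(v_0,v_1) = \upsilon_{r,k_0}$ with $v_0 = (r,k_0) \in C_{r,M_r}$ and $v_1 = \phi_r(r,k_0) \in C^{\omega}_{E,N}$. The key benefit of the choice $\nu \equiv 0$ is that each vertex of $C_{r,M_r}$ has a \emph{unique} $\upsilon$-neighbor in $\mathcal{G}$; consequently the only neighbors of $v_0$ in $\mathcal{G}$ are $(r,k_0 \pm 1)$, $v_1$, and (via $F_r$) the internal vertex $\iota_{k_0}^{r}$. Since $s$ has no type (D) edge, $v_3 \neq \iota_{k_0}^{r}$, so $v_3 \in C_{r,M_r}$. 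A parallel analysis at $v_3$ then forces $v_2$ to be either the unique $\upsilon$-neighbor of $v_3$ in $C^{\omega}_{E,N}$ or a vertex of $C_{r,M_r}$ adjacent to $v_3$; in either situation $v_2 \in \mathcal{H}_r$ and hence $s \subset \mathcal{H}_r$. The main obstacle is precisely this last case: one must rule out squares straddling two distinct $\mathcal{H}_r$ and $\mathcal{H}_{r'}$ through their respective $\upsilon$-edges, and this is exactly what the uniqueness of the $\upsilon$-neighbor (guaranteed by $\nu \equiv 0$) prevents, which is the reason for that specific choice in the algorithm.
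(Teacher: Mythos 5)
Your proof is correct and makes the same algorithmic choices as the paper ($N=6$, $\boldsymbol{\nu}\equiv 0$, $F_r$ from Proposition~\ref{proposition.quadrangulating.cn} with fresh internal vertex names), so the first assertion is treated identically. For the second assertion, your edge-type decomposition is a genuinely more complete route. The paper's own proof only spells out one case: a square cannot have a vertex in $F_r \setminus C_{r,M_r}$ and a vertex in $\mathcal{H}_r \setminus C_{r,M_r}$, since they would be antipodal and the remaining two vertices would then be two $C_{r,M_r}$-neighbors of a single interior vertex of $F_r$, contradicting the choice of $F_r$. This handles squares touching the interior of some $F_r$ but leaves implicit why a square that touches no interior $F_r$-vertex must lie in a single $\mathcal{H}_r$ rather than straddling two of them. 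Your type-(C) analysis fills exactly this gap: with $\boldsymbol{\nu}\equiv 0$ each $(r,k)$ has $\phi_r(r,k)$ as its unique $\upsilon$-neighbor, so once a $\upsilon$-edge $\upsilon_{r,k_0}$ appears in the square, the remaining two vertices are forced into $C_{r,M_r}$ and $C^{\omega}_{E,N}$, never into some $C_{r',M_{r'}}$ with $r'\neq r$. This is indeed the only mechanism by which a square could have escaped a single $\mathcal{H}_r$, and it also explains why $\boldsymbol{\nu}\equiv 0$ is the safe choice: a nonconstant $\boldsymbol{\nu}$ could give some vertex of $C_{r,M_r}$ two distinct $\upsilon$-neighbors, opening the door to such straddling squares. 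One minor omission: concluding $s\subset\mathcal{H}_r$ from the four vertices of $s$ lying in $\mathcal{H}_r$ also requires that its edges lie in $\mathcal{H}_r$; this is automatic here because $\mathcal{H}_r$ is an induced subgraph of $\mathcal{G}$ (every edge of $\mathcal{G}$ between two vertices of $V_{C^{\omega}_{E,N}}\cup V_{C_{r,M_r}}$ is an edge of $C^{\omega}_{E,N}$, of $C_{r,M_r}$, or a $\upsilon_r$-edge), but it is worth a word.
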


\begin{proof}
    We pick $N=6, \nu_r = (0)_{k\leq 6|r|}$ for all $r$, and $F_r$ as a flat quadrangulation whose border is $C_r$ and such that no vertex outside of the border has two neighbours in the border (for instance the one provided by Proposition \ref{proposition.quadrangulating.cn}). The square group of $F_r$ is trivial by Theorem \ref{lemma: quadrangulation of cycles}. Furthermore, notice that $\mathcal{H}_r(E,R,N,\textbf{F},\boldsymbol{\nu}) \cap F_r = C_r$, and that if a square of $\mathcal{G}(E,R,N,\textbf{F},\boldsymbol{\nu})$ has a vertex in $F_r\backslash C_r$ and a vertex in $\mathcal{H}_r(E,R,N,\textbf{F},\boldsymbol{\nu})\backslash C_r$, these vertices cannot be neighbours so the other two vertices belong to $C_r$, contradicting the choice of $F_r$.   
    Thus $\textbf{F}$ satisfies the required conditions.
\end{proof}

\begin{theorem} \label{thm:realization} 
 Assume that every square of $\mathcal{G}(E,R,N,\textbf{F},\boldsymbol{\nu})$ is the product of cycles of the form $p \star s \star p^{-1}$, where $p$ is a walk and $s$ is a square included in $\mathcal{H}_r(E,R,N,\textbf{F},\boldsymbol{\nu})$ or in $F_r$ for some $r$. 

We have the following: 
    \[\pi_1^\square(\mathcal{G}(E,R,N,\textbf{F},\boldsymbol{\nu})) \cong \langle E:R\rangle.\]
\end{theorem}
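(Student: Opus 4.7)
The approach is to apply Van Kampen's theorem (Theorem \ref{theorem.vankampen}) to decompose $\pi_1^\square(\mathcal{G}(E,R,N,\textbf{F},\boldsymbol{\nu}))$ into contributions from each piece $\mathcal{H}_r$ and $F_r$, and then simplify the resulting presentation using the structural properties of these pieces. First, I would choose a spanning tree $T$ of $\mathcal{G}$ that restricts to a spanning tree on each $\mathcal{H}_r$ and each $F_r$ (and on $C^\omega_{E,N}$); such a $T$ exists by iterated application of Lemma \ref{lemma:tree.extension}, starting from a spanning tree of $C^\omega_{E,N}$, extending successively through each $\mathcal{H}_r$ (which intersect pairwise exactly in $C^\omega_{E,N}$), then through each $F_r$ using only edges of $E_{F_r}\setminus E_{C_{r,M_r}}$ so as not to create a cycle. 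Applying Theorem \ref{theorem.vankampen} to the family $\{\mathcal{H}_r, F_r\}_{r\in R}$ yields
\[\pi_1^\square(\mathcal{G}) \cong \langle \tilde E : \bigcup_{r\in R}\bigl(R^\square_{T\cap \mathcal{H}_r}(\mathcal{H}_r) \cup R^\square_{T\cap F_r}(F_r)\bigr) \cup R_{\texttt{add}} \rangle.\]
The hypothesis of Theorem \ref{thm:realization} states that every square of $\mathcal{G}$ lies in the normal closure of squares contained in some $\mathcal{H}_r$ or $F_r$, so by Lemma \ref{lemma.mod.presentation}(2) the relations in $R_{\texttt{add}}$ are redundant and can be deleted.

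Next, I would eliminate the contributions from each $F_r$. Since $\pi_1^\square(F_r)$ is trivial by construction (Step 4), the sub-presentation $\langle E_{F_r}: R^\square_{T\cap F_r}(F_r)\rangle$ presents the trivial group. Iterated use of Lemma \ref{lemma.mod.presentation}(1) allows each generator in $E_{F_r}\setminus E_{C_{r,M_r}}$ to be removed from the big presentation. The relations inherited from $F_r$ are then equivalent to asserting that the cycle $C_{r,M_r}$, written as the product of its consecutive edges, is trivial in the quotient: this cycle is a loop in $F_r$, hence a product of conjugates of squares of $F_r$, all of which become trivial when $\pi_1^\square(F_r)$ is trivialized.

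The remaining task is to show that imposing $[C_{r,M_r}]=1$ in $\pi_1^\square(\mathcal{H}_r)$ produces exactly the relation $r=1$. The graph $\mathcal{H}_r$ is $C^\omega_{E,N}$ extended by the cycle $C_{r,M_r}$ together with the ladder rungs $\upsilon_r$. Since $N\geq 6$ and $M_r=N|r|\geq 6$, the only squares in $\mathcal{H}_r$ are ``ladder squares'' involving two consecutive rungs $\upsilon_{r,k},\upsilon_{r,k+1}$ and the corresponding edges of $C_{r,M_r}$ and $C^\omega_{E,N}$. The subgraph $C^\omega_{E,N}$ is a wedge of cycles of length $N\geq 6$ meeting only at $\omega$, hence contains no squares, so by Corollary \ref{corollary: wedge product}, $\pi_1^\square(C^\omega_{E,N}) \cong \mathbb{F}_E$, where each $a\in E$ corresponds to the loop around $C^\omega_{a,N}$. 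Each ladder square gives a relation of the shape $e_k^{\mathrm{cyc}} = \upsilon_{r,k}\cdot e_k^{\mathrm{img}}\cdot \upsilon_{r,k+1}^{-1}$ where $e_k^{\mathrm{cyc}}$ is the $k$-th edge of $C_{r,M_r}$ and $e_k^{\mathrm{img}}$ the corresponding edge on the image side. Multiplying these relations around the entire cycle, the rung generators telescope and one obtains that $[C_{r,M_r}]$ equals, up to conjugation by a single rung (say $\upsilon_{r,0}$), the cycle $\phi_r(C_{r,M_r})$, which by construction of $\phi_r$ spells the word $a_1^{\varepsilon_1}\cdots a_{|r|}^{\varepsilon_{|r|}} = r$ in $\mathbb{F}_E$. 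Imposing $[C_{r,M_r}]=1$ thus yields the relation $r=1$; combining this for all $r$ gives $\pi_1^\square(\mathcal{G}) \cong \mathbb{F}_E/\langle\langle R\rangle\rangle \cong \langle E:R\rangle$.

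The main obstacle will be carrying out this last telescoping carefully, particularly handling the $\boldsymbol{\nu}$ parameter (which toggles each rung between ``vertical'' and ``diagonal''): one must check that in all four combinations of $\nu_{r,k},\nu_{r,k+1}\in\{0,1\}$, the resulting ladder square still identifies precisely one edge of $C_{r,M_r}$ with one edge of $\phi_r(C_{r,M_r})$, so that the telescoping product is exactly $r$ and does not pick up spurious factors. A clean way to organize this is to pick the spanning tree of $\mathcal{H}_r$ to contain all but one edge of each cycle $C^\omega_{a,N}$ and of $C_{r,M_r}$ plus the distinguished rung $\upsilon_{r,0}$, and to eliminate every other rung by applying Lemma \ref{lemma.mod.presentation}(1) to the unique ladder square in which it appears.
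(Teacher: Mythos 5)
Your proof follows the same strategy as the paper's: apply Van Kampen (Theorem~\ref{theorem.vankampen}) to the decomposition into the pieces $\mathcal{H}_r$ and $F_r$, discard $R_{\texttt{add}}$ via the hypothesis and Lemma~\ref{lemma.mod.presentation}(2), use triviality of $\pi_1^\square(F_r)$ to kill the $F_r$-edges, and telescope the ladder-square relations to obtain $\Phi(r)=1$ for each $r$. The only organizational difference is that you invoke Van Kampen once on the full family $\{\mathcal{H}_r,F_r\}_{r\in R}$, whereas the paper first computes $\pi_1^\square(H_r\cup F_r)\cong\langle\Phi(E):\{\Phi(r)\}\rangle$ for each $r$ separately and then applies Van Kampen a second time to assemble the free product; this is a harmless reshuffling.

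There is, however, one factual error in the verification you flag as the ``main obstacle.'' You claim that for all four combinations of $\nu_{r,k},\nu_{r,k+1}\in\{0,1\}$, the ladder square determined by $\upsilon_{r,k},\upsilon_{r,k+1}$ ``identifies precisely one edge of $C_{r,M_r}$ with one edge of $\phi_r(C_{r,M_r})$.'' This is false in the mixed cases. Since $\upsilon_{r,j}$ joins $(r,j+\nu_{r,j})$ to $\phi_r(r,j-\nu_{r,j})$, the ladder square between $\upsilon_{r,k}$ and $\upsilon_{r,k+1}$ contains $1+\nu_{r,k+1}-\nu_{r,k}$ edges of $C_{r,M_r}$ and $1-\nu_{r,k+1}+\nu_{r,k}$ edges of $\phi_r(C_{r,M_r})$. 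When $\nu_{r,k}=0,\nu_{r,k+1}=1$ the square has two $C_{r,M_r}$-edges and no image-side edge, and when $\nu_{r,k}=1,\nu_{r,k+1}=0$ the reverse. Checking your proposed invariant would thus lead you to believe the construction is broken, when it is not. The correct invariant is global, not local: summing over all $M_r$ ladder squares, the increments $1\pm(\nu_{r,k+1}-\nu_{r,k})$ each total $M_r$, so each edge of $C_{r,M_r}$ and each edge of $\phi_r(C_{r,M_r})$ appears in exactly one ladder square, and the telescoping still yields $\Phi(r)$. This is precisely why the paper's Step~2.5 phrases the case split as ``$\Phi(a_1)$ is / is not one of these edges'' rather than asserting a fixed shape for every ladder square.
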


\begin{proof}

In order to simplify the notations, let us set: $G \coloneqq \mathcal{G}(E,R,N,\textbf{F},\boldsymbol{\nu})$; $H_r \coloneqq \mathcal{H}_r(E,R,N,\boldsymbol{\nu})$ for all $r \in R$; $C_a \coloneqq C^{\omega}_{a,N}$ for all $a \in E$; $C_E \coloneqq C^{\omega}_{E,N}$; and $C_r \coloneqq C_{r,M_r}$ for all $r \in R$.

For each $a\in E$, define $\Phi(a)$ as the directed edge from $(a, 1)$ to $\omega$ in $C_a$. Extend $\Phi$ to a group isomorphism from the free group $\mathbb{F}_E$ to the free group $\mathbb{F}_{\Phi(E)}$.

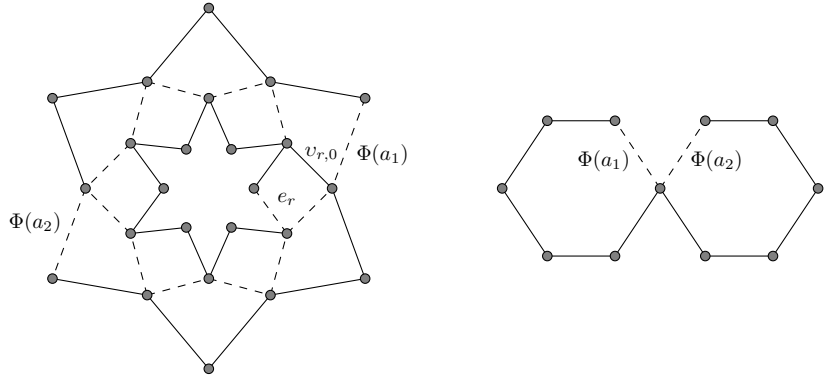
\begin{figure}[ht!]
    \centering
    \begin{tikzpicture}[scale=0.6]
\draw (0.5,0.866) -- (0,2) -- (-0.5,0.866);
\draw (-1,0) -- (-1.732,-1) -- (-0.5,-0.866);
\draw (0.5,-0.866) -- (1.732,-1);
\draw[dashed] (1.732,-1) -- (1,0);
\draw (0.5,0.866) -- (1.732,1) -- (1,0);
\draw (-0.5,0.866) -- (-1.732,1) -- (-1,0);
\draw (-0.5,-0.866) -- (0,-2) -- (0.5,-0.866);
\draw[dashed] (0,2) -- (1.366,2.366);
\draw (1.366,2.366) -- (0,4) -- (-1.366,2.366);
\draw[dashed] (-1.366,2.366) -- (0,2);
\draw[dashed] (0,-2) -- (1.366,-2.366);
\draw (1.366,-2.366) -- (0,-4) -- (-1.366,-2.366);
\draw[dashed] (-1.366,-2.366) -- (0,-2);
\draw[dashed] (1.732,1) -- (1.366,2.366);
\draw (1.366,2.366) -- (3.464,2);
\draw[dashed] (3.464,2) -- (2.732,0);
\node[scale=0.8] at (3.85,0.75) {$\Phi(a_1)$};
\node[scale=0.8] at (-3.85,-0.75) {$\Phi(a_2)$};
\draw (2.732,0) -- (1.732,1); 
\node[scale=0.8] at (2.5,0.75) {$\upsilon_{r,0}$}; 
\draw[dashed] (-1.732,1) -- (-2.732,0);
\draw (-2.732,0) -- (-3.464,2) -- (-1.366,2.366);
\draw[dashed] (-1.366,2.366) -- (-1.732,1);
\draw[dashed] (1.732,-1) -- (2.732,0);
\draw (2.732,0) -- (3.464,-2) -- (1.366,-2.366);
\draw[dashed] (1.366,-2.366) -- (1.732,-1);
\node[scale=0.8] at (1.732,-0.225) {$e_r$}; 
\draw[dashed] (-1.732,-1) -- (-2.732,0);
\draw[dashed] (-2.732,0) -- (-3.464,-2);
\draw (-3.464,-2) -- (-1.366,-2.366);
\draw[dashed]  (-1.366,-2.366) -- (-1.732,-1);

\draw[fill=gray] (1.366,2.366) circle (3pt);
\draw[fill=gray] (-1.366,2.366) circle (3pt);
\draw[fill=gray] (1.366,-2.366) circle (3pt);
\draw[fill=gray] (2.732,0) circle (3pt);
\draw[fill=gray] (-2.732,0) circle (3pt);
\draw[fill=gray] (-1.366,-2.366) circle (3pt);

\draw[fill=gray] (0,2) circle (3pt);
\draw[fill=gray] (0,-2) circle (3pt);
\draw[fill=gray] (1.732,1) circle (3pt);
\draw[fill=gray] (1.732,-1) circle (3pt);
\draw[fill=gray] (-1.732,1) circle (3pt);
\draw[fill=gray] (-1.732,-1) circle (3pt);

\draw[fill=gray] (0,4) circle (3pt);
\draw[fill=gray] (0,-4) circle (3pt);
\draw[fill=gray] (3.464,2) circle (3pt);
\draw[fill=gray] (3.464,-2) circle (3pt);
\draw[fill=gray] (-3.464,2) circle (3pt);
\draw[fill=gray] (-3.464,-2) circle (3pt);
\draw[fill=gray] (0.5,0.866) circle (3pt);
\draw[fill=gray] (1,0) circle (3pt);
\draw[fill=gray] (0.5,-0.866) circle (3pt);
\draw[fill=gray] (-0.5,0.866) circle (3pt);
\draw[fill=gray] (-1,0) circle (3pt);
\draw[fill=gray] (-0.5,-0.866) circle (3pt);

\begin{scope}[xshift=10cm]
\draw[dashed] (0,0) -- (1,1.5);
\draw (1,1.5) -- (2.5,1.5) -- (3.5,0) -- (2.5,-1.5); 
\draw (2.5,-1.5) -- (1,-1.5);
\node[scale=0.8] at (1.25,0.625) {$\Phi(a_2)$};
\node[scale=0.8] at (-1.25,0.625) {$\Phi(a_1)$};
\draw (1,-1.5) -- (0,0); 
\draw[dashed] (0,0) -- (-1,1.5);
\draw (-1,1.5) -- (-2.5,1.5) -- (-3.5,0) -- (-2.5,-1.5) -- (-1,-1.5) -- (0,0); 
    \draw[fill=gray] (0,0) circle (3pt);
    \draw[fill=gray] (-3.5,0) circle (3pt);
    \draw[fill=gray] (3.5,0) circle (3pt);
    \draw[fill=gray] (1,1.5) circle (3pt);
    \draw[fill=gray] (2.5,1.5) circle (3pt);
    \draw[fill=gray] (1,-1.5) circle (3pt);
    \draw[fill=gray] (2.5,-1.5) circle (3pt);
        \draw[fill=gray] (-1,1.5) circle (3pt);
    \draw[fill=gray] (-2.5,1.5) circle (3pt);
    \draw[fill=gray] (-1,-1.5) circle (3pt);
    \draw[fill=gray] (-2.5,-1.5) circle (3pt);
\end{scope}

\end{tikzpicture}
    \caption{Illustration of the definition of the tree $S_r$, in the same context and with the same notations as on Figure \ref{fig:illustration.realization.simple}. The edges outside of $T_r$ are dashed.
    }
    \label{fig:illustration.realization.2}
\end{figure}

\paragraph{Definition of a spanning tree.} For each $r \in R$, let $e_r$ be the edge from $(r, 0)$ to $(r,1)$. 
Consider the spanning tree $S_r$ of $H_r$ whose edges are: the ones of $C_E$
except the edges $\Phi(a)$, $a \in E$; 
the edges of $C_r$ 
except $e_r$; and edge $\upsilon_{r,0}$.
This definition is illustrated on Figure \ref{fig:illustration.realization.2}.
Since $H_r \cap F_r = \partial F_r= C_{r}$, we have by construction $S_r \cap \partial F_r= C_{r}\setminus \{e_r\}$ which is connected. We apply Lemma \ref{lemma.tree.extension} to obtain a spanning tree $T_r$ of $H_r\cup F_r$ such that $T_r\cap H_r = S_r$ and $T_r \cap F_r$ is a spanning tree of $F_r$.
\paragraph{For all $r \in R$, the square group of $H_r\cup F_r$ is $\langle \Phi(E) : \{\Phi(r)\}\rangle $.} 
First note that $T_r$
is the union of the spanning trees $T_r\cap H_r$ and $T_r\cap F_r$.
Then the Van Kampen theorem (Theorem \ref{theorem.vankampen}) implies that
the square group of the graph $H_r\cup F_r$ has presentation $(\tilde E_r, \tilde R_r)$, where $\tilde E_r = E_{H_r} \cup E_{F_r}$, $ \tilde R_r = R^{\square}_{T_r \cap F_r} (F_r) \cup  R^{ \square}_{T_r \cap H_r} (H_r) \cup R_{\texttt{add}}$ where $R_{\texttt{add}}$ is the set of squares
$s$ in $H_r\cup F_r$ that are not completely contained neither in $H_r$ nor $F_r$. 

We apply modifications to the presentation $(\tilde E_r,\tilde R_r)$ in successive steps as follows, where each step does not change the corresponding group.

\begin{enumerate}
    \item 
    As a consequence of the hypothesis, every element of $R_{\texttt{add}}$ is in the smallest normal subgroup containing $R^{\square}_{T_r \cap F_r} (F_r)$ and $R^{\square}_{T_r \cap H_r} (H_r)$. By Lemma \ref{lemma.mod.presentation}(2) we can remove $R_{\texttt{add}}$ from the presentation without changing the group. 

    \item 
    The square group of $F_r$, which can be written as $\langle E_{F_r} : R^{\square}_{T_r \cap F_r} (F_r)\rangle$ is trivial. In particular,  $E_{F_r}$ is contained in the smallest normal subgroup of $\mathbb F_{F_r}$ containing $R^{\square}_{T_r \cap F_r} (F_r)$, and thus in the smallest normal subgroup of $\mathbb F_{\tilde E_r}$ containing $R^{\square}_{T_r \cap F_r} (F_r)$. Therefore
    Lemma \ref{lemma.mod.presentation}(2) 
    implies that $\langle \tilde E_r : \tilde R_r \rangle=\langle \tilde E_r : \tilde R_r \cup \{a: a\in E_{F_r}\} \rangle$.
    As a consequence, we apply Lemma \ref{lemma.mod.presentation}(1), and remove every generator $a \in E_{F_r}$ from $\tilde E_r$ and from every relation in $\tilde R_r$ (this is equivalent to setting $a$ to identity) without changing the generated group.

    \item Because the set $\tilde R_r$ contains the relation $e$ for every edge $e$ remaining in the spanning tree $T_r\cap H_r$, we remove every such generator from $\tilde E_r$ and from every relation remaining in $\tilde R_r$.

    After all modifications made so far, $\tilde E_r$ is reduced to the generators $\Phi(a)$ for $a \in E$ and generators corresponding to all edges $\upsilon_{r,i}$ except $\upsilon_{r,0}$ (which was removed along with the spanning tree), and their inverse edges.
    
   \item We use Lemma \ref{lemma.mod.presentation}(1) to replace the inverse edge of $\Phi(a)$ by $\Phi(a)^{-1}$ and the inverse edge of $\upsilon_{r,i}$ ($i\neq 0$) by $\upsilon_{r,i}^{-1}$ in every relation in $\tilde R_r$.
    
    \item This is the main step of the proof, illustrated in Figure~\ref{fig:illustration.realization.3}.
    
    Write $r =  a_1^{\varepsilon_1} \ldots a_{|r|}^{\varepsilon_{|r|}} \in R$. Observe that the edge $\upsilon_{r,1}$ belongs to a square whose other edges are $\upsilon_{r,0}$ and two edges from $C_r$ and/or $C_{a_1}$. We have two possibilities:
\begin{itemize}
\item $\Phi(a_1)$ is not one of these edges. Then all these generators were removed in previous steps, so the relation in $\tilde R_r$ corresponding to this square became $\upsilon_{r,1}$ in the group $\langle \tilde E_r : \tilde R_r \rangle$. We thus remove $\upsilon_{r,1}$ from $\tilde E_r$ and from every relation in $\tilde R$ without changing the group.
\item $\Phi(a_1)$ is one of these edges. Then the relation in $\tilde R_r$ corresponding to this square became $\upsilon_{r,1}\Phi(a_1)^{-\varepsilon_1}$ (remember that the value of $\varepsilon_1$ affected the order of the sequence of edges $\upsilon_r$ around the cycle $C_r$). Applying Lemma \ref{lemma.mod.presentation}(1), we remove the generator $\upsilon_{r,1}$ from $\tilde E_r$ and replace it by $\Phi(a_1)^{\varepsilon_1}$ in every relation in $\tilde R$ without changing the group.
\end{itemize}

\begin{figure}[ht!]
    \centering
    \input{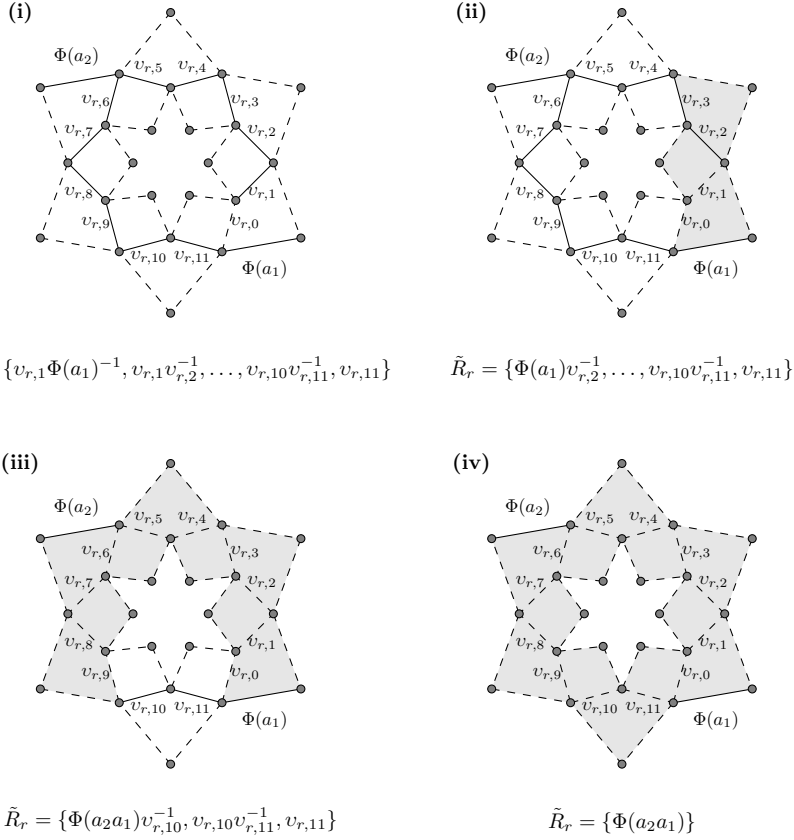}
    \caption{Illustration of the step 5 in the proof of Theorem \ref{thm:realization}, using the same graph as Figure \ref{fig:illustration.realization}. At each step, dashed edges correspond to generators which are already removed and the current contents of the set $\tilde R_r$ are indicated.}
    \label{fig:illustration.realization.3}
\end{figure}

Applying this process inductively, we remove one by one generators $\upsilon_{r,i}$ (for $i\in\Z/M_r\Z$) from $\tilde E_r$ where, at each step, $\upsilon_{r,i}$ is replaced with $\Phi(a_1^{\varepsilon_1} \ldots a_k^{\varepsilon_k})$ in every relation in $\tilde R$, where $k$ is the number of edges in $\Phi(E)$ encountered in the first $i$ steps. 

At the last step, $\upsilon_{r,M_r-1}$ is in a square in common with the edge $\upsilon_{r,0}$ which was previously removed from the set of generators. This square yielded the relation $\upsilon_{r,M_r-1}$, which became, as we showed by induction, $\Phi(a_1^{\varepsilon_1} \ldots a_n^{\varepsilon_n}) = \Phi(r)$.

By construction, there is no other square in $H_r$ than the ones containing two consecutive edges of $\upsilon_r$. As all the relations corresponding to these squares became trivial, the unique nontrivial relation left in $\tilde R_r$ is $\Phi(a_1^{\varepsilon_1} \ldots a_n^{\varepsilon_n}) = \Phi(r)$.
\end{enumerate}

After these manipulations, $\tilde E_r$ is reduced to $\Phi(E)$ and the unique element left in $\tilde R_r$ is $\Phi(r)$, meaning that $(\Phi(E), \{\Phi(r)\})$ is a presentation of the square group of $H_r \cup F_r$.
 
\paragraph{The square group of $G$ is isomorphic to $\langle E : R\rangle $.} We apply Van Kampen theorem (Theorem~\ref{theorem.vankampen}) on the graphs $H_r \cup F_r$ for $r\in R$. As in the previous construction, the theorem hypothesis to ensures that any relation in $R_{add}$ is in the smallest normal subgroup generated by existing relations. Therefore
\[\pi_1^\square(G) = \langle \Phi(E): \Phi(R)\rangle \cong \langle E : R \rangle.\qedhere\]
\end{proof}

\subsection{Undecidability of properties of the square group}\label{subsection: undecidable}

The crux of the undecidability lies in theorems of Adian (often written Adyan) \cite{MR103217} and (independently) Rabin \cite{MR110743} which prove that most properties of finitely presented groups are undecidable. A reference for this is \cite[Chapter IV.4]{Lyndon}.

A non-empty set $P$ of finitely presented groups is called a \emph{Markov property} when it is stable by isomorphism and there exists a finitely presented group which cannot be embedded in an element of $P$.
For instance, the set of finite groups and the trivial group singleton are both Markov.

\begin{theorem}[Adian-Rabin Theorem] Suppose $P$ is a Markov property. It is algorithmically undecidable from $(E,R)$ whether the finitely presented group $\langle E:R\rangle$ is in  $P$.\label{Theorem: Adian rabin}
\end{theorem}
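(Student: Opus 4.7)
The plan is to reduce from the word problem for finitely presented groups, which by the Novikov--Boone theorem is algorithmically undecidable: there is a fixed finitely presented group $G_0 = \langle X_0 : R_0\rangle$ for which no algorithm decides, given a word $w$ in the generators, whether $w =_{G_0} 1$. I would algorithmically associate to each such $w$ a finite presentation of a group $H_w$ satisfying
\begin{itemize}
    \item if $w =_{G_0} 1$, then $H_w$ is trivial (after a harmless preliminary reduction one may assume the trivial group lies in $P$; otherwise replace ``trivial'' by a fixed $G_+ \in P$ and adapt the construction below accordingly);
    \item if $w \neq_{G_0} 1$, then the Markov obstruction $K$ provided by the hypothesis on $P$ embeds in $H_w$, hence $H_w \notin P$ by the very definition of Markov property.
\end{itemize}
A decision procedure for $P$, applied to the presentation of $H_w$, would then decide the word problem of $G_0$, contradicting Novikov--Boone.

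To build $H_w$, I would use a Rabin-style gadget combining HNN extensions and amalgamated free products. First, I would form an auxiliary finitely presented group $A_w$ from $G_0$ by a sequence of HNN extensions with stable letters, carrying a distinguished element $t_w$ whose triviality in $A_w$ is equivalent to $w =_{G_0} 1$; concretely, one adds enough stable letters to simulate, inside the group, the reductions that would kill $w$ together with a mechanism that propagates such a collapse to the whole of $A_w$. Next, I would embed $K$ into a finitely presented group $B$ equipped with a distinguished nontrivial element $b$, and take $H_w$ to be the amalgamated product of $A_w$ and $B$ along the cyclic subgroups generated by $t_w$ and $b$. The design is arranged so that $b$ (and therefore the copy of $K$) collapses to the identity in $H_w$ precisely when $t_w$ does; conversely, when $t_w$ is nontrivial, Britton's lemma together with the normal-form theorem for amalgamated products guarantees that $B$ --- and hence $K$ --- sits as a genuine subgroup of $H_w$.

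The main obstacle is verifying the two halves of the correspondence simultaneously. The positive direction (collapse into $P$ when $w =_{G_0} 1$) requires careful Tietze-transformation bookkeeping to ensure that no spurious generator or relation survives in $H_w$. The negative direction (embedding of $K$ when $w \neq_{G_0} 1$) is the deeper step and relies on the normal-form theorems to preclude any unintended identifications inside the amalgamation; this is where Britton's lemma does the real work. Once the gadget is in place, Theorem~\ref{Theorem: Adian rabin} is immediate: the algorithmic map $w \mapsto H_w$ reduces the word problem of $G_0$ to membership in $P$, so the latter cannot be decidable.
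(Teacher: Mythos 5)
The paper does not prove the Adian--Rabin theorem: it is stated as a black box, with citations to the original works of Adian and Rabin and a pointer to \cite[Chapter IV.4]{Lyndon} for a full proof. So there is no in-paper argument to compare against, and your proposal has to be assessed as a free-standing sketch of the classical proof.

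The overall shape of your sketch is right (reduce from the word problem via Novikov--Boone; build a Rabin gadget whose collapse is governed by $w=_{G_0}1$; use free constructions so that the Markov obstruction $K$ genuinely embeds when $w\neq 1$; appeal to Britton's lemma and the amalgamated-product normal form; patch the ``trivial group'' case by taking a free product with a fixed $G_+\in P$). But two crucial steps are asserted rather than constructed, and they are precisely where the theorem is hard. First, the claim that one can produce $A_w$ with a distinguished element $t_w$ whose triviality in $A_w$ is equivalent to $w=_{G_0}1$, by ``adding enough stable letters to simulate the reductions that would kill $w$'', is not a construction: Rabin's actual gadget is a very specific chain of free products with amalgamation and HNN extensions, and the substance of the argument is verifying that each successive identification is legal (that the subgroups being identified are free of the right rank, that nothing collapses prematurely, etc.). Second, the final step, an amalgamated product ``along the cyclic subgroups generated by $t_w$ and $b$'', is not well-posed in the branch $w=_{G_0}1$: one cannot amalgamate along a trivial subgroup on one side and an infinite cyclic subgroup on the other. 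The classical construction instead takes a quotient (or arranges beforehand that both subgroups are genuinely infinite cyclic before amalgamating), and the collapse of $B$ (hence of $K$) when $w=1$ has to be proved --- setting $b=1$ does not by itself kill all of $B$ unless $B$ has been built so that $b$ normally generates it. As written, your proposal is a correct road map of the proof strategy, but the two technical lemmas it quietly invokes are the core content of Adian--Rabin and cannot be taken for granted.
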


As mentioned in the introduction, a consequence of \cite{GHO23} is that the block gluing distance of a two-dimensional homshift depends on whether its square group is finite. This fact and Theorem \ref{thm:everygroup} yields the main theorem:

\main*

\begin{proof}
It is sufficient to prove this for two-dimensional homshifts on a non-bipartite graph. This is equivalent to prove that for these homshifts, it is not decidable if it is $\Theta(n)$-block gluing or $O(\log(n))$-block gluing. For all bipartite graph $G$, define $s(G)$ the graph obtained from $G$ by adding a self-loop on the first vertex in the description of the graph $G$. The square group of $s(G)$ is equal to $\pi_1^{\square}(G) * \mathbb Z / 2 \mathbb Z$ (Example \ref{example.self-loop}), which is finite if and only if $\pi_1^{\square}(G)$ is trivial. It was shown in \cite[Sections 4.4 and 6]{GHO23} that a homshift $X^2_G$ is $\Theta(n)$-phased block gluing if and only if the square cover $\mathcal U^\square_G$ is infinite. By Proposition \ref{Proposition: square cover finite square group}, this is true if and only if the square group $\pi_1^{\square}(G)$ is infinite. Since $s(G)$ is not bipartite, by Lemma \ref{lemma.phase.to.non.phase} we get that $X^2_{s(G)}$ is $\Theta(n)$-block gluing when the square group of $G$ is not trivial and $O(\log(n)))$-block gluing otherwise. Therefore, if it was possible to decide if a two-dimensional homshift on a non bipartite graph is $O(\log(n))$-block gluing or $\Theta(n)$-block gluing, by Theorem \ref{thm:everygroup} we would be able to decide if a finitely generated group is trivial or not, which is not possible by Adian-Rabin theorem. 
\end{proof}
\begin{remark}
In the previous proof, instead of adding a self-loop we could add a copy of any non-bipartite graph whose square group is $\Z/2\Z$. One example of such graph is the complete graph $K_4$ with four vertices $\{0,1,2,3\}$. 

\begin{figure}[!h]
    \centering
    \begin{tikzpicture}[scale=0.4]

\draw[dashed] (-2,-2) -- (2,-2);   
\draw[dashed] (2,-2) -- (2,2);     
\draw[dashed] (2,2) -- (-2,2);     
\draw[-latex] (-2,2) -- (-2,-2);           
\draw[-latex] (-2,-2) -- (2,2);            
\draw[-latex] (2,-2) -- (-2,2);            

\draw[fill=gray] (-2,-2) circle (3pt);
\draw[fill=gray] (2,-2) circle (3pt);
\draw[fill=gray] (2,2) circle (3pt);
\draw[fill=gray] (-2,2) circle (3pt);

\node[scale=0.8] at (-2.6,-2.6) {$0$};
\node[scale=0.8] at (2.6,-2.6) {$1$};
\node[scale=0.8] at (2.6,2.6) {$2$};
\node[scale=0.8] at (-2.6,2.6) {$3$};

\node[scale=0.9] at (-2.75,0) {$b$};

\node[scale=0.9] at (1.325,0.75) {$c$};
\node[scale=0.9] at (-1.325,0.75) {$a$};

\end{tikzpicture}
    \caption{Illustration for the computation of the square group of the complete graph $K_4$.}
    \label{fig:k4}
\end{figure}

Indeed, consider the spanning tree $T$ whose vertices are $0,1,2$ and edges are $(0,1),(1,0),(1,2),(2,1),(2,3),(3,2)$, corresponding to the dashed edges on Figure \ref{fig:k4}. Let us denote respectively by $a,b,c$ the edges $(1,3),(3,0)$ and $(0,2)$. Up to rotational shift, the squares of this graph are $01230$,$01320$,$02130$,$02310$,$03120$, and $03210$. The corresponding relations, after simplification, are $b$, $ac^{-1}$, $cab$, $ca^{-1}$, $b^{-1} a^{-1} c^{-1}$, and $b^{-1}$. This implies that the 
square group of $K_4$ is 
\[\langle a,c : ac, ac^{-1}\rangle = \langle a : a^2\rangle \cong \mathbb Z / 2 \mathbb Z.\]

It is not difficult  to prove that for all $n \ge 4$, the square group of the complete graph on $n$ vertices is also $\mathbb Z / 2 \mathbb Z$.
\end{remark}

\begin{corollary}
    The classes of $\Theta(n)$-phased block gluing and $O(\log(n))$-phased block gluing two-dimensional homshifts are also computably inseparable.
\end{corollary}

\section{\label{section.open.questions}Open questions}
We hope that our work can be a starting point for exploring undecidability in homshifts, in particular finding undecidable properties and natural subclasses of homshifts for which such properties becomes decidable. Below are some more specific questions.

\subsection{Stronger mixing-type properties}

\mainquestion*

 The main result presented in the present article leads us to believe that this problem is also undecidable, at least when $d = 2$. We present some possible approaches in this case below. On the other hand, we have no idea for the case of higher dimensions $(d > 2)$.

One possibility would be to find a Markov property $P$ of finite presentations and construct a computable map $(E,R) \mapsto \mathcal{G}(E,R)$, where $\mathcal{G}(E,R)$ is a finite undirected graph for all presentation $(E,R)$, such that the square group of $\mathcal{G}(E,R)$ is $\langle E : R \rangle$ and the homshift $X^2_{\mathcal{G}(E,R)}$ is strongly irreducible if and only if $(E,R)$ satisfies the property $P$.

We have not found a property $P$ which satisfies these requirements, and it is not clear that such a property should be related to the square group. On the other hand, we believe that it is possible to choose the parameters of the construction presented in Section \ref{section.proof.realization} so that the obtained graph corresponds to a strongly irreducible two-dimensional homshift. When $E = \{a\}$ and $R = \{a^n\}$ for some integer $n > 0$ (the group $\langle E : R \rangle$ is thus $\mathbb Z / n \mathbb Z$), we believe that the obtained graph can be made strongly irreducible with the following choices: 
\begin{enumerate}
    \item The sequence $\nu$ is alternating $0$'s and $1$'s.
    \item The graph $F_{a^n}$, is \emph{dismantleable} (see \cite{nowakowski1983vertex})
    \item Every walk in $F_{a^n}$ from one vertex in $C_{a^n}$ to another in $C_{a^n}$ can be \emph{folded} to a walk in $C_{a^n}$ without changing the end points (see \cite{nowakowski1983vertex}).
\end{enumerate}
However, we do not know if this can be generalized to every finite presentation, and if it is, how - this is not clear even for finite presentations of finite cyclic groups. This seems to be related with very subtle aspects of the word problem which we haven't been able to identify.

Other mixing-type properties have been studied in subshifts: (quantified) corner gluing \cite{Boyle-Pavlov-Schraudner}, finite extension property, topological strong spatial mixing \cite{briceno2017strong}, etc. We are interested in how homshifts behave with regards to these properties, how they are related to the graph $G$ and whether they are decidable. We mention an open question from \cite{GHO23}.

\begin{question}
    Is there a two-dimensional homshift which is $o(\log(n))$-block gluing but not $O(1)$-block gluing?
\end{question}

\subsection{ Generalizations}

\subsubsection{Homshifts on other Cayley graphs} 

While one can define homshifts on arbitrary Cayley graph of any finitely presented group, there is very little known about mixing properties, or even more elementary questions such as the decidability of the emptiness problem. If the group has more than one end, then this may not be such a difficult question \cite{zbMATH06680456} but we don't really understand what happens in (for example) $\Z^d$ with an arbitrary set of generators. Consider the following question by Jordan Ellenberg \cite{CayleyEllenberg2018}: 

\begin{question}
    If a homshift on an arbitrary Cayley graph of $\mathbb Z^d$ is not empty, does it contain necessarily a periodic configuration ?
\end{question}

In order to approach undecidability of mixing-type properties, it would be appropriate to restrict to some simple classes of Cayley graphs, such as the ones of commutative groups, or of groups generated by two elements and possessing a \emph{fundamental domain} - such as Baumslag-Solitar groups for instance. 

Several of the techniques that we used are very specific to the grid graphs and would not apply in general. On the other hand, it should be possible to generalize some of our definitions - such as the one of the square group - and results - such as the lifting lemma.

\subsubsection{Sublinear block gluing rates of two-dimensional SFT} 

Theorem \ref{thm:main} yields an undecidability result for two-dimensional shifts of finite type: it is undecidable whether such a shift is $o(n)$-block gluing or not, since two-dimensional homshifts are shifts of finite type. However we do not know if, as it is the case for homshifts, there are no two-dimensional shifts of finite type which are $o(n)$-block gluing but not $O(\log(n))$-block gluing \cite{GS}. This question is deceptively hard, as natural ideas translate into constructions which invariably involve some elements making the shift linearly block gluing.

A possible direction of research would be to generalize the results obtained in \cite{GHO23} to larger classes of two-dimensional shifts of finite type, which would probably involve generalizations of the notions of square group and square cover, possibly in relation with the notion of projective fundamental group \cite{geller-propp}.

\subsubsection{Continuous analogues}\label{Section: Continuous analogues}

It is also natural to look for continuous analogues of our results. One difficulty comes from the generalization of homshifts to a continuous context. We mention the following question, which was raised by Tom Meyerovitch during discussions.

Let $M$ be a compact connected Riemannian manifold. For a fixed $n>0$ and two $1$-Lipschitz curves $p,q:[0,n]\to M$, define $d(p,q)$ as the infimum on the positive numbers $t$ such that there exists a $1$-Lipschitz map $\phi:[0,t]\times[0,n]\to M$ such that $\phi(0,\cdot)=p$ and $\phi(t,\cdot)=q$.

Define $D_n(M)$ as the supremum of $d(p,q)$ over all $1$-Lipshitz curves $p$ and $q$ of length $n$ on $M$. It is easy to see that $D_n(M) = O(n)$ for all $M$.
    
\begin{question}
    What are the possible asymptotic behaviors of the sequence $(D_n(M))_n$? In particular, is there a Riemannian manifold $M$ such that $D_n(M)$ is $o(n)$ but not $O(\log(n))$?
\end{question}

\section*{Acknowledgments}
N. Chandgotia is thankful to Lior Silberman for a very fruitful conversation back in 2014 which has helped inspire this line of work and to Brian Marcus for numerous discussions and encouragement. Finally he would like to thank Tom Meyerovitch for drawing our attention to the question of strong irreducibility and smooth analogues of our result (Section~\ref{Section: Continuous analogues}) and Yinon Spinka for giving us more context. We would like to thank Fedor Manin for pointing out connections with discrete homotopy theory and discussions regarding Section~\ref{Section: Continuous analogues}. 

Research of N.~Chandgotia was partially supported by SERB SRG grant and INSA fellowship. Research of P. Oprocha was partially supported by the project  CZ.02.01.01/00/23\_021/0008759 supported by EU funds, through the Operational Programme Johannes Amos Comenius.
P. Oprocha is also grateful to Max Planck Institute for Mathematics in Bonn for its hospitality and financial support.
We thank Kinjal Dey and the referees for the careful reading, pointing out inconsistencies, improvements and typos and especially pointing out that the self-loop can be replaced by $K_4$ in the main construction.
\newpage

\printindex

\bibliographystyle{amsalpha}
\bibliography{biblio.bib}

\end{document}